\setlist[enumerate]{leftmargin=.5in}
\setlist[itemize]{leftmargin=.5in}
\newtheorem{definition}{Definition}
\newtheorem{theorem}{Theorem}
\newtheorem{remark}{Remark}
\newtheorem{lemma}{Lemma}
\newtheorem{corollary}{Corollary}
\begin{document}
\begin{center}
{\Large \bf Oscillatory collision approach in the Earth-Moon restricted three body problem}

 \vskip 0.5cm
{\large Maciej J. Capi\'nski \footnote{
    MJC has been partially supported by the the Polish National Science Center (NCN) grants 2019/35/B/ST1/00655 and 2021/41/B/ST1/00407.}, Aleksander Pasiut \footnote{
    AP has been partially supported by the NCN grant 2021/41/B/ST1/00407.}}

 \vskip 0.2cm
 { Faculty of Applied Mathematics,\\
 AGH University of Kraków}\\
{\small  Mickiewicza 30, 30-059 Krak\'ow, Poland} \\
e-mail: \texttt{maciej.capinski@agh.edu.pl, pasiut@agh.edu.pl}

\vskip 0.5cm

 \today
\vskip 0.5cm

\end{center}

\begin{abstract}
    We consider the Earth-Moon planar circular restricted three body problem and present a proof of the existence orbits, which approach arbitrarily close to one of the primary masses, and at the same time after each approach they move away from the mass to a prescribed distance. In other words the orbits oscillate between being arbitrarily close to collision and away from it. 
    We achieve our goal with the use of topological tools combined with rigorous interval computations. We use the Levi-Civita regularization and validate that the dynamics in the regularized coordinates leads to a good topological alignment between various sets. We then perform shadowing arguments that this leads to the required dynamics in the original coordinates of the system.
\end{abstract}

\vspace{0.3cm} \noindent {\bf Keywords:}  celestial mechanics, collisions, computer assisted proofs

\vspace{0.3cm} \noindent {\bf AMS classification:} 37C29, 37J46, 70F07

\vskip\baselineskip

\section{Introduction}
Despite being formulated in the 17th century, the planar circular restricted three body problem (PCR3BP) still remains a source of interesting dynamical systems problems. The use of modern numerical tools, including computer assisted interval arithmetic machinery, makes it possible to obtain new results for this classical problem and improve the description and understanding of its properties.

The PCR3BP describes the motion of a massless particle under the gravitational pull of two large masses, called primaries, which move on circular orbits on the same plane as the massless particle.
In 1922, J. Chazy \cite{MR1509241} classified the possible final motions that a trajectory $q(t)$ of a massless body in the PCR3BP may have when the time $t \to \pm \infty$ as:
\begin{itemize}
    \item $H^{\pm}$ (hyperbolic): 
    $ \|q(t)\|\to\infty $ and $\| \dot q(t)\| \to c>0 $ as $t\to \pm \infty, $
    \item $P^{\pm}$ (parabolic):
    $ \|q(t)\|\to\infty $ and $\| \dot q(t)\| \to 0 $ as $ t\to \pm \infty,$
    \item $Os^{\pm}$ (oscillatory): 
    \[
        \limsup_{t \to \pm \infty} \| q(t) \|= + \infty\text{ and }
        \liminf_{t \to \pm \infty} \| q(t) \|<+\infty,
    \]
    \item $B^{\pm}$ (bounded):
    $ \limsup_{t\to \pm \infty} \|q(t)\| < \infty. $
\end{itemize}
It is a natural extension to include also a possibility of collision in this classification. Denoting the distance from $q(t)$ to its closest primary as $r(t)$ we can further define:
\begin{itemize}
    \item $C^{\pm}$ (colliding): 
    $r(t)\to 0 $ as $t\to t_c^{\pm}$, for some collision time $t_c,$
    \item $Oc^{\pm}$ (oscillating to collision):
	\[
        \liminf_{t \to \pm \infty} r(t) = 0\text{ and }\limsup_{t \to \pm \infty} r(t) >0,
    \]
    \item $A^{\pm}$ (away from collision):
    $\liminf_{t \to \pm \infty} r(t) > 0$.
\end{itemize}
This research paper is a part of a project to develop a methodology for proving that all of these types of motions co-exist and can be combined together. The emphasis for us is for the method to be applicable in {\em non-perturbative} regimes and to be applicable to energy levels and parameter domains relevant to celestial mechanics. In this paper we focus on the motions associated with collisions and our main result is:
\begin{theorem}
    \label{thm:primary_result}
    Consider the PCR3BP with $\mu=1/82$, which is an approximation of the Earth-Moon system. Then,
    \begin{equation}
        \label{eq:main}
        X^- \cap Y^+ \ne \emptyset,
    \end{equation}
    for $X,Y \in \{C,Oc,A\}$. 

    In particular, we also prove that for every $\varepsilon>0$ there exists a periodic orbit satisfying
    \[
        \min_{t\in \mathbb{R}} r(t)<\varepsilon \qquad \mbox{and} \qquad \max_{t\in \mathbb{R}} r(t) > 1.
    \]
\end{theorem}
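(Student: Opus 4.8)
The strategy is to localize the problem near one of the primaries, remove the collision singularity there by the Levi--Civita regularization, and run a topological forcing argument for the regularized return map. After fixing the value of the Jacobi integral, I would pass to Levi--Civita coordinates $w$ centred at the chosen primary (so that $r\sim|w|^{2}$) together with the time reparametrization $dt=|w|^{2}\,d\tau$; in these coordinates the vector field extends smoothly across $w=0$, and a collision orbit of the original system is precisely a regularized trajectory that passes through the origin $w=0$. Choosing a Poincar\'e section $\Sigma$ transverse to the regularized flow reduces everything to a return map $P$ of a two--dimensional domain, on which collision orbits correspond to a curve $\mathcal{C}\subset\Sigma$ (the set of points whose trajectory reaches $w=0$), and on which one also isolates a ``far'' region where the distance to the primaries exceeds the prescribed value $1$.

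The heart of the proof is the construction and rigorous verification of a \emph{network of covering relations} for $P$. I would introduce explicit $h$--sets: a far $h$--set $F$ sitting in the far region; a sequence of near--collision $h$--sets $N_{1},N_{2},\dots$ with $\operatorname{dist}(N_{k},\text{primary})\to 0$; and the collision curve $\mathcal{C}$. Using a $C^{1}$ rigorous ODE integrator together with interval arithmetic, I would validate covering relations of the type $F\Longrightarrow F$, $F\Longrightarrow N_{k}$, $N_{k}\Longrightarrow F$ and $N_{k}\Longrightarrow\mathcal{C}$ (and, if convenient, $N_{k}\Longrightarrow N_{k+1}$), together with cone conditions along the relevant loops; by reversibility of the PCR3BP the time--reversed counterparts hold as well. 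This is the step that pins down the value $\mu=1/82$: the verification is a finite, checkable computation, but it is carried out with no small parameter to lean on.

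Given such a network, the conclusions are the standard consequences of covering relations. The symbolic--dynamics lemma yields, for every admissible itinerary, an orbit realizing it, and the cone conditions upgrade closed loops to hyperbolic periodic (and shadowing) orbits. Reading off itineraries: the constant itinerary $\dots FFF\dots$ gives an orbit with $\liminf r>0$ in both time directions, i.e. $A^{-}\cap A^{+}$; an itinerary whose forward (respectively backward) part meets $\mathcal{C}$ gives an orbit that collides in finite forward (respectively backward) physical time, hence $X^{-}\cap C^{+}$ and $C^{-}\cap X^{+}$ for $X\in\{A,C,Oc\}$, with the ejection--collision orbit $C^{-}\cap C^{+}$ corresponding to a finite itinerary bracketed by $\mathcal{C}$ on both ends; and an itinerary with blocks $\dots F\,N_{k_{1}}\,F\,N_{k_{2}}\,F\dots$ with $k_{j}\to\infty$ produces $\liminf r=0$ together with $\limsup r>1$, i.e. $Oc^{\pm}$, which can be prescribed independently on the two ends and combined with any of the previous cases. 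This exhausts all nine combinations in \eqref{eq:main}. Finally, closing a single loop $F\Longrightarrow N_{K}\Longrightarrow F\Longrightarrow\cdots\Longrightarrow F$ with its cone conditions produces a fixed point of a power of $P$ inside $F$, hence a periodic orbit of the regularized system which, since it avoids $w=0$, is a genuine periodic orbit of the PCR3BP; taking $K$ large makes it pass within distance $\varepsilon$ of the primary, while the visit to $F$ forces $\max_{t}r(t)>1$.

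What remains, and the second delicate point, is the shadowing step back to the original coordinates and original time: one must check that a regularized trajectory through $w=0$ is a genuine collision with $r(t)\to 0$ in \emph{finite} physical time, and that along the oscillatory and away itineraries the physical time $t$ still tends to $\pm\infty$ --- the increments $\int|w|^{2}\,d\tau$ accumulated on the near--collision passes are small, but their sum over infinitely many returns diverges --- so that the $t\to\pm\infty$ limits in Chazy's classification are legitimately attained. I expect the main obstacle to be the computer--assisted verification of the covering relations near collision: there the regularized flow is strongly expanding (a short $\tau$--arc near $w=0$ corresponds to a long excursion of the physical orbit), so controlling how the $h$--sets are wrapped by $P$ in a non-perturbative Earth--Moon regime requires a carefully chosen section, well--adapted coordinate frames on the $h$--sets, and sufficiently fine subdivisions; by comparison the time--reparametrization bookkeeping above, though it must be done with care, is routine.
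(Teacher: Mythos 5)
Your overall architecture coincides with the paper's: Levi--Civita regularization at the primary, Poincar\'e sections and a return map on the energy level, covering relations verified by interval arithmetic, reversibility to obtain time-reversed itineraries, horizontal/vertical-disc-type arguments for the itineraries that begin or end in collision, and periodic orbits from closed loops of coverings. The gap is in the step you treat as a purely computational nuisance: the near-collision $h$-sets. You propose to ``validate'' covering relations $F\Longrightarrow N_{k}$ and $N_{k}\Longrightarrow F$ for a sequence $N_{1},N_{2},\dots$ accumulating on the collision set. This cannot work as stated, for two reasons. First, the oscillatory motions $Oc^{\pm}$ require passes that come \emph{arbitrarily} close to collision, hence infinitely many distinct sets $N_{k}$; a direct rigorous-integration check is a finite computation and cannot certify an infinite family. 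Second, and more fundamentally, single-return coverings $F\Longrightarrow N_{k}$ for $N_{k}$ arbitrarily thin and arbitrarily close to the collision curve would force the image of $F$ to be contained in an arbitrarily thin strip; since the regularized return map has bounded contraction on a fixed region, this containment (the condition that the image not exit through $N_{k}^{+}$) fails for large $k$. Getting within distance $\varepsilon$ of collision requires on the order of $\log(1/\varepsilon)$ returns near the ejection/collision orbit, not one.

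The missing idea is the paper's analytic cascade near the hyperbolic fixed point: the ejection/collision orbit is a genuine hyperbolic periodic orbit of the regularized flow, and on a section through the collision point one verifies, \emph{once}, interval bounds on the partial derivatives of the local return maps on a fixed box (conditions of the type (\ref{eq:der_cond_1})--(\ref{eq:der_cond_2}) with $\alpha>2c+\rho$, $c+\rho<1$, after a linear change $\eta_L$ aligning the invariant manifolds). These finite bounds alone imply, by an elementary mean-value argument (Lemmas \ref{lem:cov_rel_3} and \ref{lem:gk-approach}), an \emph{infinite} self-similar sequence of coverings $\mathcal{R}_{k}\Longrightarrow\mathcal{R}_{k+1}$, $\mathcal{R}_{k}\Longrightarrow\mathcal{Q}_{k+1}$ whose sets shrink geometrically toward the fixed point while remaining in the quadrant disjoint from the collision line (which, in the coordinates $\psi_{0}$, is the diagonal through the fixed point). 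This is what simultaneously produces arbitrarily close collision approaches, guarantees collision avoidance along them, and keeps the verification finite; the return to the far set is then obtained for free from the reversing symmetry $S$ by making the innermost set $\mathcal{Q}_{4k}$ self $S$-symmetric, rather than by separately validating $N_{k}\Longrightarrow F$ for every $k$. Your parenthetical ``$N_{k}\Longrightarrow N_{k+1}$ if convenient'' points in the right direction, but without the abstract lemma converting fixed derivative bounds into the whole infinite family, and without the quadrant positioning relative to the collision curve, the proof of the $Oc$ cases and of the ``periodic orbit within $\varepsilon$ of collision'' assertion does not close. A further small omission: one must also check (as the paper does with rigorous orbit enclosures) that the trajectories between consecutive sections never meet the collision set, so that collisions can only occur on the section where they are explicitly controlled.
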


In a related paper \cite{MR4391693} it has been proven that (\ref{eq:main}) holds for $X,Y\in \{H,P,Os,B\}$. The next step, which is part of ongoing work, will be to combine the two results. Our methods are developed towards such end.

Our construction for the proof of Theorem \ref{thm:primary_result} is based on the family of Lyapunov orbits which originate around the libration fixed point located between the Earth and the Moon. Such family of periodic orbits is parameterized by the energy, with each orbit lying on a different energy level. The periodic orbits extend far from the fixed point, until they collide with Earth, forming an ejection/collision orbit \cite{Broucke, MR4576879}. A surprising feature is that the family of Lyapunov orbits does not terminate at such collision, it extends beyond, but each periodic orbit makes an additional loop around Earth. (See Figure \ref{fig:lyapunov_orbits}.) 

\begin{figure}[h]
    \centering
    \begin{overpic}[width=5cm]{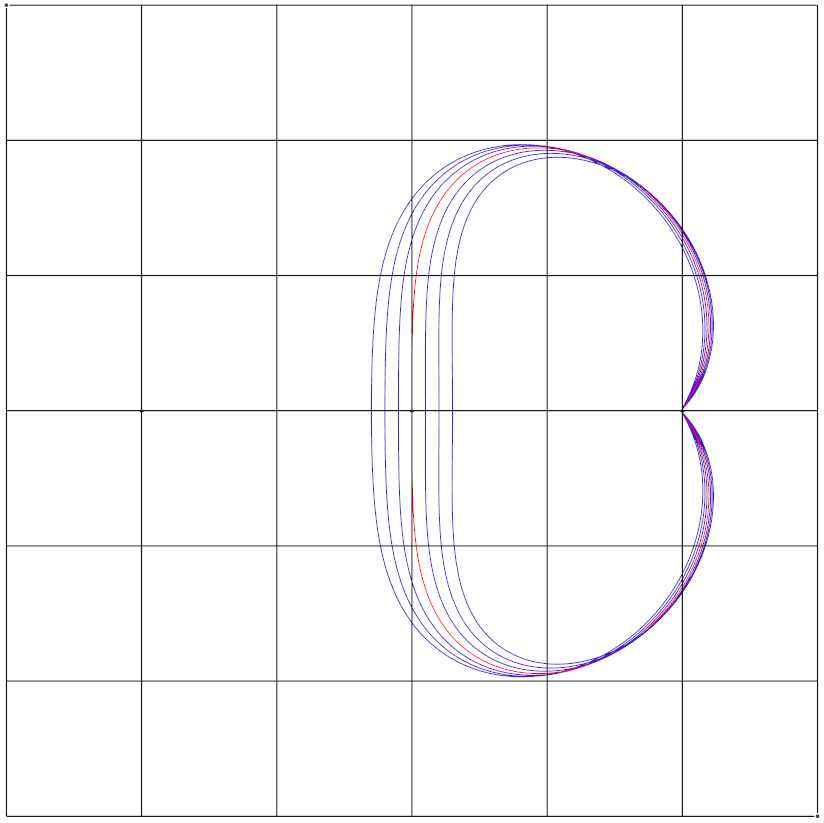}    
        \put (10,-8) {$-1$}
        \put (50,-10) {$u$}
        \put (81,-8) {$1$}
        \put (-6,81) {$1$}
        \put (-9,50) {$v$}
        \put (-11,15) {$-1$}
    \end{overpic}
    \begin{tikzpicture}
        \draw[draw=white] (0,0) rectangle (1,0);
    \end{tikzpicture}
    \begin{overpic}[width=5cm]{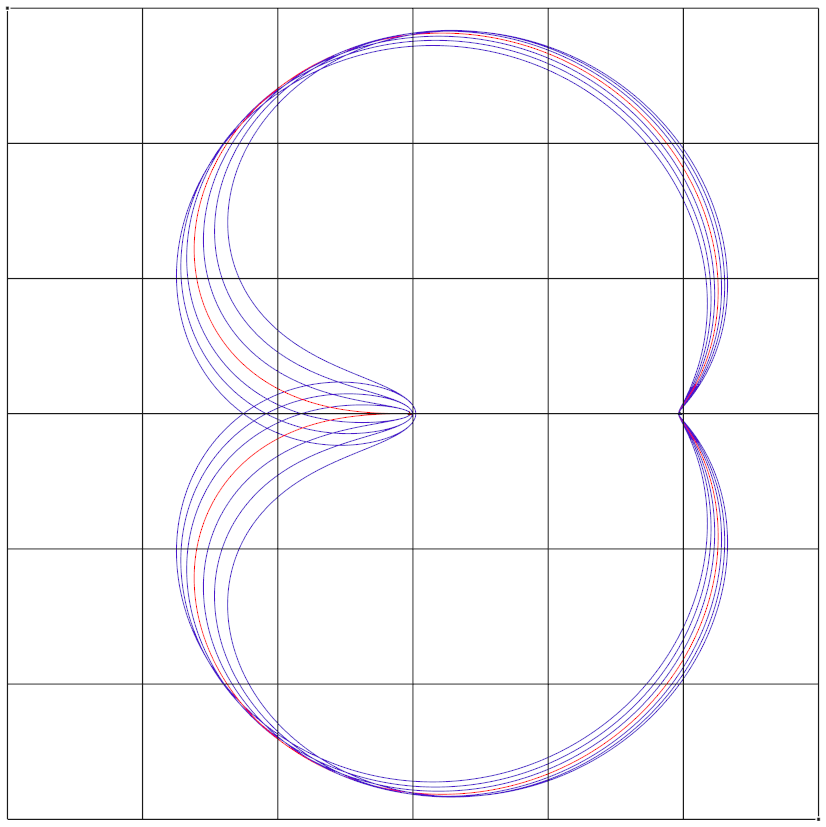}
        \put (10,-8) {$-1$}
        \put (50,-10) {$x$}
        \put (81,-8) {$1$}
        \put (-6,81) {$1$}
        \put (-9,50) {$y$}
        \put (-11,15) {$-1$}
    \end{overpic}
    \vspace{8pt}
    \caption{Lyapunov orbits in the regularized coordinates, where the collision is at the origin (left) and in the original coordinates, where the collision is $(x_2,0)=(-\mu_1,0)$ at the tip of the ``left wedge" (right). The ejection/collision orbit is depicted in red. The orbits pass close to the Moon, which is at $(1,0)$ on the left plot and at $(x_1,0)=(\mu_2,0)$ on the right plot.}
    \label{fig:lyapunov_orbits}
\end{figure}

In our construction we exploit the fact that each Lyapunov orbit, when considered on the constant energy manifold, is hyperbolic. The stable and unstable manifolds of a given Lyapunov orbit can intersect, leading to a topological horseshoe and chaos. This is also the case for the ejection/collision orbit. It is difficult though to talk about a stable and unstable manifold of an ejection/collision orbit in the original coordinates, but it makes perfect sense to do so in the regularized Levi Civita coordinates, since in these coordinates the ejection/collision orbit is a well behaved hyperbolic periodic orbit. We use the regularized coordinates to position sets for the construction of a horseshoe, which we place on suitable Poincar\'e sections. We then use the method of covering relations introduced by Gidea and Zgliczy\'nski \cite{ZGLICZYNSKI200432} to show that we can shadow chaotic orbits passing through such sets. A slightly more delicate issue is to ensure that such shadowing orbits do not collide with Earth. We therefore provide an analytic argument ensuring that suitable sequences of covering relations can be constructed arbitrarily close to the collision, and that the sets used for the construction are separated from the collision.

The idea to use stable/unstable manifolds of invariant objects to prove oscillatory motions dates back to the works of Sitnikov \cite{MR0127389}, who constructed oscillatory motions to infinity in the 1960’s, for a symmetric restricted spatial 3-body problem we nowadays refer to as the Sitnikov model. The use of the intersections of stable/unstable manifolds to infinity was introduced by Moser \cite{Moser01} and has also been used in a number of works to obtain oscillatory motions to infinity in a number of contexts, including \cite{MR4391693,GalanteK11,GuardiaMS16,MR4265664,GuardiaSMS17,SimoL80,LlibreS80,Seara20}. Most of these, with to the best our knowledge the exception of \cite{MR4391693,GalanteK11}, use perturbative methods. As a result the orbits obtained are either confined to `small' regions of the phase space or only exist for certain special parameter ranges.

Our paper is related to a number of previous works concerning collision and near collision orbits. What we believe makes our approach different is the fact that we do not use perturbative arguments and establish the result by purely constructive arguments. This allows us to treat the system far from a perturbative regime, for the explicit mass parameter of the Earth-Moon system. 

Our result is related to the work of Font, Nunes, and Simó \cite{MR1877971, MR2475705}. They study the existence of chaotic invariant sets, which contain orbits that make infinitely many near collision approaches to the smaller body in the PCR3BP. In their work the mass ratio between the primaries is treated as the perturbation parameter. The method works by performing perturbative expansions of Poincar\'e maps, and as in our work, showing the existence a horseshoe involving near collision approaches. The method is constructive, and  provides numerical evidence that their expansions are applicable for the mass parameter up to, $10^{-3}$. This is a concrete and physically meaningful value, but the evidence is numerical, and the value is below the one we use, which is the Earth-Moon system.

Similar results to our Theorem \ref{thm:primary_result} have also been stablished using KAM arguments combined with Levi-Civita coordinates. For instance, in \cite{MR0967629} Chenciner and Llibre prove that in the regularized coordinates the collision circle intersects transversally with invariant tori; the authors refer these as ``punctured invariant tori". (The tori are punctured by the collision set.) On the invariant tori the dynamics is conjugate to irrational rotation, which combined with the collision circle leads to the existence of orbits which pass arbitrarily close to collision. The arguments work for any mass ration but the Jacobi constant needs to be sufficiently large. Punctured tori  are also considered by Féjoz \cite{MR1849229, MR1919782} in the setting of the averaged four body problem, and the result is obtained for a parameter regime in which the problem can be considered as a perturbation of two uncoupled Kepler problems. In a related work by Zhao \cite{MR3417880} it is shown that there exists a positive measure of the punctured tori in the spacial restricted three body problem.

Our result is also related to the work by Bolotin and Mackay \cite{MR1805879} for the PCR3BP. They use variational methods to prove the existence normally hyperbolic invariant manifolds, whose stable/unstable manifolds intersect transversally near an ejection/collision orbit. This also leads to the orbits which oscillate arbitrarily close to the collision. The result holds for an explicit interval of energies, but the mass ratio is treated as a small perturbation parameter. The same authors extend this approach to the spatial problem, \cite{MR2245344} and to the elliptic restricted three body problem \cite{MR2331205}.

Regarding our program to develop a method for combining all types of motions, it is important to point to the work of Moeckel  \cite{MR2350333} who has proved the existence of oscillatory motions and close to collision orbits via symbolic dynamics for the three body problem relying on triple collision approaches. His construction is based on the method of correctly aligned windows. (Covering relations we use in this paper are synonymous with correctly aligned windows.) Due to the construction relying on a triple collision the result is applicable for sufficiently small total angular momentum.

Our paper is organized as follows. In section \ref{sec:prel} we give preliminaries, which cover the PCR3BP, Levi-Civita regularization, interval Newton method and covering relations. Section \ref{sec:Poincare-maps} contains a description of how we set up Poincar\'e sections and covering relations on a constant energy level. In section \ref{sec:dynamics_in_regularized_system} we construct covering relations for a regularized system in Levi-Civita coordinates. In section \ref{sec:collision-approach} we show that the symbolic dynamics stemming from the covering relations from section \ref{sec:dynamics_in_regularized_system} leads to the motions from Theorem \ref{thm:primary_result}.

\section{Preliminaries\label{sec:prel}}

\subsection{The planar circular restricted 3-body problem\label{subsec:PCR3BP}}

Planar circular restricted 3-body problem (PCR3BP) is a celestial mechanics model in which three point masses are placed on a two-dimensional plane and they are interacting with each other due to the gravitational force. The first two masses have values $\mu_{1}$ and $\mu_{2}$, while the third one is infinitesimal. The third mass is referred to as the “test particle”, while the first two large masses are called “primaries”. We assume that the primaries are moving along circular orbits around the center of mass of the system with angular velocity equal to one. The point of interest is the movement of the test particle.

Let us introduce a coordinate system whose origin coincides with the center of mass of the primaries and which is co-rotating with them in such a way that the masses become stationary at points $(x_{1},0)$ and $(x_{2},0)$, respectively. We assume that units of mass, length and time are such that $x_{1}=\mu_{2}$, $x_{2}=-\mu_{1}$ and $\mu_{1}+\mu_{2}=1$.

The movement of the test particle is driven by the Hamiltonian $H$, which is defined as:
\begin{equation}
    H(x,y,p_{x},p_{y})=\frac{1}{2}(p_{x}^{2}+p_{y}^{2})+yp_{x}-xp_{y}-\sum_{i\in\{1,2\}}\mu_{i}\big((x-x_{i})^{2}+y^{2}\big)^{-\frac{1}{2}}.
\end{equation}
The $x$,$y$ are the position coordinates of the test particle and $p_{x}$,$p_{y}$ are their associated momenta.
\begin{remark}
    The phase space in this setup is $\mathbb{R}^{4}$ but with exclusion of two 2-dimensional planes: 
    \[
        \big\{(x_{i},0,p_{x},p_{y}):p_{x},p_{y}\in\mathbb{R},i\in\{1,2\}\big\},
    \]
    for which the Hamiltonian $H$ is not defined and which are interpreted as collisions of the test particle with the respective primaries.
\end{remark}

The Hamiltonian $H$ defines the following system of ordinary differential equations, which together with an initial position and momentum of the test particle uniquely determine its evolution:
\begin{equation}
    \label{eq:std_flow}
    \frac{d}{dt}(x,y,p_{x},p_{y})=J\nabla H(x,y,p_{x},p_{y}),
\end{equation}
where
\begin{equation}
    \label{eq:J_matrix_def}
    J=\begin{bmatrix}0 & I_{2}\\-I_{2} & 0 \end{bmatrix},\quad
    I_{2}=\begin{bmatrix}1 & 0\\0 & 1\end{bmatrix}.
\end{equation}
In the expanded form, the equation (\ref{eq:std_flow}) reads:
\begin{align*}
    \frac{dx}{dt} & =p_{x}+y,\\
    \frac{dy}{dt} & =p_{y}-x,\\
    \frac{dp_{x}}{dt} & =p_{y}-\sum_{i\in\{1,2\}}\mu_{i}(x-x_{i})\big((x-x_{i})^{2}+y^{2}\big)^{-\frac{3}{2}},\\
    \frac{dp_{y}}{dt} & =-p_{x}-\sum_{i\in\{1,2\}}\mu_{i}y\big((x-x_{i})^{2}+y^{2}\big)^{-\frac{3}{2}},
\end{align*}
where $t$ is time. For $q\in\mathbb{R}^{4}$ we shall write $\Phi_{t}(q)$ for the flow induced by (\ref{eq:std_flow}). 

Let us notice that the differential equation (\ref{eq:std_flow}) features a time-reversal symmetry with respect to the function $S$ which is defined by:
\begin{equation}
    \label{eq:S_symmetry}
    S(q_{1},q_{2},q_{3},q_{4})=(q_{1},-q_{2},-q_{3},q_{4}).
\end{equation}
The time-reversal symmetry is
\begin{equation}
    \label{eq:S_symmetry-flow}
    S\big(\Phi_{t}(q)\big)=\Phi_{-t}\big(S(q)\big).
\end{equation}

\subsection{Levi-Civita regularization}

In section (\ref{subsec:PCR3BP}) it was stated that the phase space of the standard PCR3BP is $\mathbb{R}^{4}$ with exclusion of two 2-dimensional planes that correspond to collisions of test particle with respective primaries. Such situation is problematic because it makes it impossible to apply numerical tools to analyze trajectories that pass in the neighborhood of the collisions, or which reach the collisions.

A solution to this problem is the Levi-Civita regularization \cite{MR1555161, 10.1007/BF02418577}. In summary, the method is based on introducing a suitable coordinate system, together with a suitable change of time, after which we obtain system driven by a Hamiltonian $\Gamma$. The two systems are equivalent, up to the coordinate and time change.

Let $x_{i}$ for one of the $i\in\{1,2\}$ and $h\in\mathbb{R}$ be fixed. Let us introduce a coordinate change $(x,y,p_x,p_y)=\gamma_i (u,v,p_u,p_v)$ such that:
\begin{equation}
    \label{eq:lc_coord_change}
    \begin{aligned}
        x & =u^{2}-v^{2}+x_{i},\\
        y & =2uv,\\
        p_{x} & =\frac{1}{2}\frac{up_{u}-vp_{v}}{u^{2}+v^{2}},\\
        p_{y} & =\frac{1}{2}\frac{vp_{u}+up_{v}}{u^{2}+v^{2}}.
    \end{aligned}
\end{equation}
This coordinate change is defined for points from the set
\[
    \big\{(u,v,p_{u},p_{v})\in\mathbb{R}^{4}:(u,v)\neq(0,0)\big\}.
\]
The image of $\gamma_{i}$ is $\big\{(x,y,p_{x},p_{y})\in\mathbb{R}^{4}:(x,y)\neq(x_{i},0)\big\}$. This coordinate change is designed to remove the singularity from the equation at the point $x_{i}$ . We shall therefore say that it ``regularizes" the system at $x_{i}$ . The collision with $x_{i}$ corresponds to $u=v=0$. At this point the coordinate change (\ref{eq:lc_coord_change}) is not defined, but we shall see that after a suitable change of time the resulting ODE in the regularized coordinates is well defined.
\begin{remark}
    Position change in (\ref{eq:lc_coord_change}) satisfies $x+\mathrm{i}y=(u+\mathrm{i}v)^{2}+x_{i}$. The transformation of momenta is adjusted in such a way, that the coordinate change $\gamma_{i}$ is canonical.
\end{remark}
The regularization also involves a change of time from $t$ to $s$, which is defined by
\begin{equation}
    \frac{dt}{ds}(s)=4\big(u^{2}(s)+v^{2}(s)\big).\label{eq:reg_time_def}
\end{equation}

Let us now define a Hamiltonian $\Gamma_{i,h}$ as:
\begin{equation}
    \Gamma_{i,h}(u,v,p_{u},p_{v}) =\frac{1}{2}(p_{u}^{2}+p_{v}^{2})-2x_{i}(vp_{u}+up_{v})-4\mu_{i}+2(u^{2}+v^{2})\hat{\Gamma}_{i,h}\label{eq:Gamma_i}
\end{equation}
where
\[
    \hat{\Gamma}_{i,h}=vp_{u}-up_{v}-2h-\frac{2\mu_{3-i}}{\sqrt{(u^{2}-v^{2}+\epsilon_{i})^{2}+(2uv)^{2}}},
\]
and
\[
    \epsilon_{i}=\begin{cases}
        +1, & i=1,\\
        -1, & i=2.
    \end{cases}
\]

We consider the Hamiltonian system driven by $\Gamma_{i,h}$ under the time $s$. When written in full form, the ordinary differential equations induced by this system are:
\begin{align*}
    \frac{du}{ds} & =p_{u}+2v(u^{2}+v^{2}-x_{i}),\\
    \frac{dv}{ds} & =p_{v}-2u(u^{2}+v^{2}+x_{i}),\\
    \frac{dp_{u}}{ds} & =4u(2h-vp_{u})+2p_{v}(x_{i}+3u^{2}+v^{2})+\frac{8u\mu_{3-i}\big(1+\epsilon_{i}(u^{2}-3v^{2})\big)}{\big((u^{2}-v^{2}+\epsilon_{i})^{2}+4u^{2}v^{2}\big)^{\frac{3}{2}}},\\
    \frac{dp_{v}}{ds} & =4v(2h+up_{v})+2p_{u}(x_{i}-3v^{2}-u^{2})+\frac{8v\mu_{3-i}\big(1-\epsilon_{i}(v^{2}-3u^{2})\big)}{\big((u^{2}-v^{2}+\epsilon_{i})^{2}+4u^{2}v^{2}\big)^{\frac{3}{2}}}.
\end{align*}

For $w=(u,v,p_{u},p_{v})$ and time $s$ we shall use the notation $\Phi_{s}^{i,h}(w)$ for the flow induced by $\Gamma_{i,h}$.
\begin{remark}
    The phase space of the Hamiltonian $\Gamma_{i,h}$ is $\mathbb{R}^{4}$, but with exclusion of two 2-dimensional planes for which the Hamiltonian $\Gamma_{i,h}$ is not defined. (These planes represent the collision with the second body, which was not regularized.) For $i=1$ we exclude the planes
    \[
        \big\{(0,v,p_{u},p_{v}):v=\pm1,p_{u},p_{v}\in\mathbb{R}\big\}=\gamma_{1}^{-1}\Big(\big\{(x_{2},0,p_{x},p_{y}),p_{x},p_{y}\in\mathbb{R}\big\}\Big),
    \]
    and for $i=2$ we exclude
    \[
        \big\{(u,0,p_{u},p_{v}):u=\pm1,p_{u},p_{v}\in\mathbb{R}\big\}=\gamma_{2}^{-1}\Big(\big\{(x_{1},0,p_{x},p_{y}),p_{x},p_{y}\in\mathbb{R}\big\}\Big).
    \]
 
\end{remark}

The following theorem is the key result in the context of the regularization
method.
\begin{theorem}
    \label{thm:pcr3bp_reg}
    \cite{10.1007/BF02418577} If $w \in \mathbb{R}^4$ is such that $\Gamma_{i,h}(w)=0$ and for every time $\sigma\in[0,s(t)]$ we have $\pi_{u,v}\big(\Phi_{\sigma}^{i,h}(w)\big)\neq(0,0)$, where $\pi_{u,v}$ denotes the projection on coordinates $u,v$, then
    \begin{equation}
        \Phi_{t}\big(\gamma_{i}(w)\big)=\gamma_{i}\big(\Phi_{s(t)}^{i,h}(w)\big).\label{eq:pcr3bp_reg}
    \end{equation}
\end{theorem}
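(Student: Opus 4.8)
The plan is to recognize $\Gamma_{i,h}$ as a positive conformal multiple of the pullback of $H-h$ under the canonical change of variables $\gamma_i$, and then to use the elementary fact that multiplying a Hamiltonian by a positive function reparametrizes its flow on any zero level set of the product.

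First I would verify the algebraic identity
\[
\Gamma_{i,h}(u,v,p_u,p_v)=4\big(u^2+v^2\big)\Big(H\big(\gamma_i(u,v,p_u,p_v)\big)-h\Big),\qquad (u,v)\neq(0,0).
\]
This is a direct substitution of (\ref{eq:lc_coord_change}): since $x-x_i=u^2-v^2$ and $y=2uv$, one has $(x-x_i)^2+y^2=(u^2+v^2)^2$, so the singular potential $-\mu_i\big((x-x_i)^2+y^2\big)^{-1/2}$ becomes $-\mu_i/(u^2+v^2)$ and, multiplied by $4(u^2+v^2)$, collapses to the constant $-4\mu_i$; the kinetic part simplifies via $(up_u-vp_v)^2+(vp_u+up_v)^2=(u^2+v^2)(p_u^2+p_v^2)$; the term $4(u^2+v^2)(yp_x-xp_y)$ expands to $2(u^2+v^2)(vp_u-up_v)-2x_i(vp_u+up_v)$; and $x_i-x_{3-i}=\epsilon_i$ accounts for the remaining $\mu_{3-i}$ potential term. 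The identity also shows that the right-hand side, and hence $\Gamma_{i,h}$, extends smoothly across $(u,v)=(0,0)$; this is exactly what ``regularization'' means.

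Since $\gamma_i$ is canonical (the Remark after (\ref{eq:lc_coord_change})), it conjugates the Hamiltonian vector field $J\nabla H$ to $J\nabla K$, where $K:=H\circ\gamma_i$, on the open set where $\gamma_i$ is a diffeomorphism, i.e. away from $(u,v)=(0,0)$. I would then invoke the reparametrization lemma: for a smooth positive function $g$ and $\widetilde H:=g\cdot(K-h)$ one has $J\nabla\widetilde H=g\,J\nabla K+(K-h)\,J\nabla g$, so on the common zero set $\{K=h\}=\{\widetilde H=0\}$ (inside the domain of $K$), which is invariant under both flows, the two vector fields are positively proportional with ratio $g$; by uniqueness of solutions, the $\widetilde H$-trajectory through a point of this set coincides, after the time change $dt/ds=g$, with the $K$-trajectory through the same point. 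Applying this with $g=4(u^2+v^2)$ and $\widetilde H=\Gamma_{i,h}$ — the hypotheses $\Gamma_{i,h}(w)=0$ and (taking $\sigma=0$) $\pi_{u,v}(w)\neq(0,0)$ place $w$ on $\{K=h\}$, i.e. $H(\gamma_i(w))=h$ — and observing that (\ref{eq:reg_time_def}) is exactly this time change, with $s(t)$ its inverse, yields (\ref{eq:pcr3bp_reg}): along $[0,s(t)]$ the non-collision hypothesis gives $g>0$, so $s\mapsto t(s)$ is a strictly increasing homeomorphism of $[0,s(t)]$ onto $[0,t]$ and $\gamma_i$ stays defined; evaluating the conjugacy at $s=s(t)$ gives $\Phi_t(\gamma_i(w))=\gamma_i(\Phi_{s(t)}^{i,h}(w))$. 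The case $t<0$ is symmetric.

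The algebraic identity is routine but a little tedious; the genuinely delicate step is the time-change bookkeeping — verifying that the non-collision hypothesis is precisely what makes the function defined by (\ref{eq:reg_time_def}) invertible on $[0,s(t)]$, so that $s(t)$ and $\gamma_i(\Phi_{s(t)}^{i,h}(w))$ are well defined, and that the reparametrized $\widetilde H$-trajectory and the $K$-trajectory genuinely coincide for as long as both exist and avoid collision (a uniqueness argument). Beyond this I anticipate no essential obstacle.
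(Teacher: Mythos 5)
Your proposal is correct; note, however, that the paper does not prove this theorem at all --- it is quoted from Levi-Civita's original work \cite{10.1007/BF02418577} as a known result, so there is no in-paper argument to compare against. Your route is the standard one and it goes through: the identity $\Gamma_{i,h}=4(u^2+v^2)\bigl(H\circ\gamma_i-h\bigr)$ checks out exactly as you describe (including $(x-x_i)^2+y^2=(u^2+v^2)^2$, the kinetic simplification, and $x_i-x_{3-i}=\epsilon_i$), and on the set where $4(u^2+v^2)>0$ the two Hamiltonian vector fields are proportional on the common zero level, so the time change (\ref{eq:reg_time_def}) plus uniqueness of solutions gives (\ref{eq:pcr3bp_reg}). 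One small point worth stating explicitly: $\gamma_i$ is two-to-one, hence only a \emph{local} symplectomorphism away from $(u,v)=(0,0)$; this is harmless, since the cleanest way to finish is to differentiate $t\mapsto\gamma_i\bigl(\Phi^{i,h}_{s(t)}(w)\bigr)$ and use $D\gamma_i\,J\,(D\gamma_i)^T=J$ pointwise along the trajectory to see that it solves the same initial value problem as $t\mapsto\Phi_t(\gamma_i(w))$, which sidesteps any global invertibility of $\gamma_i$.
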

Let us now make some comments regarding this result.

Every point $q$ in the original coordinates corresponds to two points $w_{1}$, $w_{2}$ in regularized coordinates, meaning that $\gamma_{i}(w_{1})=\gamma_{i}(w_{2})=q$. Theorem \ref{thm:pcr3bp_reg} is valid regardles of the choice of $w_{1}$ or $w_{2}$. The key assumption of Theorem \ref{thm:pcr3bp_reg} that $\Gamma_{i,h} (w) = 0$ corresponds to the fact that (\ref{eq:pcr3bp_reg}) holds only for $w$ for which $H\big(\gamma_{i}(w)\big)=h$. We could propagate the flow $\Phi_{i,h}(w)$ for some point $\Gamma_{i,h}(w) \neq 0$, but such trajectory would not correspond to a true physical trajectory of the original system. We can view $\Gamma_{i,h}$ as a family of systems, parameterized by $h$, each providing true trajectories of the original system for the energy level $H=h$.

For S defined in (\ref{eq:S_symmetry}) we have the following time-reversing symmetry
\[
    S\left(\Phi_{s}^{i,h}(w)\right)=\Phi_{-s}^{i,h}\left(S(w)\right).
\]

We finish by noting that the collision with $x_{i}$ in the regularized coordinates corresponds to $u=v=0$. Thus from the condition $\Gamma_{i,h} (w) = 0$, the physical trajectories arriving or ejecting from collision need to satisfy
\begin{equation}
    p_{u}^{2}+p_{v}^{2}=8\mu_{i}. \label{eq:collision-circle}
\end{equation}
This means that in the regularized coordinates a collision is a circle.

\subsection{Interval Newton method}

In this section we describe a useful tool from interval analysis. Let us first set up the needed notation.

We shall refer to a product of closed intervals in $\mathbb{R}^{n}$ as an interval vector. We define an interval enclosure of a set $X\subset\mathbb{R}^{n}$ by:
\[
    [X]=Y_{1}\times...\times Y_{n}\text{ such that }\pi_{i}(X)\subset Y_{i}\text{ for }i\in\{1,...,n\}
\]
where $Y_{1},...,Y_{n}$ are closed real intervals and $\pi_{i}$ is the projection onto the $i$-th component. Note that an interval enclosure is not unique. (We prefer tighter enclosures, since they provide better estimates.) For a set $X\subset\mathbb{R}^{n}$ and for a map $F=(F_{1},...,F_{n}):\mathbb{R}^{n}\to\mathbb{R}^{n}$ we denote an interval enclosure of $F(X)$ as
\[
    \big[F(X)\big]=Y_{1}\times...\times Y_{n}\text{ such that }F_{i}(X)\subset Y_{i}\text{ for }i\in\{1,...,n\}.
\]

A set $A\subset\mathbb{R}^{n\times n}$ is an interval matrix if each of its entries is a closed interval. We say that an interval matrix $A$ is invertible if for every $B\in A$ the matrix $B$ is invertible. We will call an interval matrix $[A^{-1}]$ an interval enclosure of the inverse of $A$ if for every $B\in A$ we have $B^{-1}\in[A^{-1}]$. Note that as in case of interval enclosures in general, the interval enclosure of the inverse of $A$ is not unique, since after it is
enlarged it remains an enclosure of the inverse.

If the map $F$ is $C^{1}$ then we will write $[DF(X)]\subset\mathbb{R}^{n\times n}$ for an interval matrix that satisfies 
\[
    DF(X)\subset[DF(X)].
\]

\begin{theorem}
    \cite{ALEFELD2000421} Let $X$ be an interval vector with nonempty interior, let $x\in\mathrm{Int(X)}$ be some point and let $F:\mathbb{R}^{n}\to\mathbb{R}^{n}$ be $C^{1}$. If for 
    \[
        N(x,X):=x-\big[DF(X)^{-1}\big]\big[F(x)\big]
    \]
    we have $N(x,X)\subset X$, then there exists a point $x^{*}\in N(x,X)$ such that 
    \[
        F(x^{*})=0.
    \]
\end{theorem}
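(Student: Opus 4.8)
The plan is to obtain the statement from two ingredients: the mean value theorem, which on the \emph{convex} box $X$ turns $F$ into a family of exact affine models whose slope matrices live in $[DF(X)]$, and a topological existence principle. For the first ingredient, since $X$ is an interval vector the segment $[x,y]$ lies in $X$ for every $y\in X$, so applying the mean value theorem to each coordinate $F_i$ along $[x,y]$ yields a matrix $M_y$ whose $i$-th row is $\nabla F_i$ at some point of $[x,y]$, hence $M_y\in[DF(X)]$ and $F(y)=F(x)+M_y(y-x)$. The hypothesis that $[DF(X)^{-1}]$ exists means every matrix of $[DF(X)]$ is invertible with inverse in $[DF(X)^{-1}]$. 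The \emph{enclosure} half of the theorem is then immediate: if $x^{*}\in X$ and $F(x^{*})=0$, then $x^{*}=x-M_{x^{*}}^{-1}F(x)\in x-[DF(X)^{-1}][F(x)]=N(x,X)$, and the same identity applied to two zeros of $F$ in $X$ forces them to coincide. So only the \emph{existence} of a zero in $X$ remains.

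For existence I would use topological degree. Assume first that $N(x,X)$ is contained in $\operatorname{Int}(X)$; then every zero of $F$ in $X$ lies in $\operatorname{Int}(X)$, so $\deg(F,\operatorname{Int}(X),0)$ is defined. Fix any $A\in[DF(X)]$ and consider the affine homotopy $H_t(y)=(1-t)F(y)+t\bigl(F(x)+A(y-x)\bigr)$, $t\in[0,1]$, joining $F$ to $L(y)=F(x)+A(y-x)$. Using the mean value identity, any zero $y\in X$ of $H_t$ satisfies $0=F(x)+N_t(y-x)$ with $N_t=(1-t)M_y+tA\in[DF(X)]$ (a convex combination of matrices of the convex interval matrix), hence $y=x-N_t^{-1}F(x)\in N(x,X)\subseteq\operatorname{Int}(X)$; in particular $H_t$ has no zero on $\partial X$, so the homotopy is admissible and $\deg(F,\operatorname{Int}(X),0)=\deg(L,\operatorname{Int}(X),0)$. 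But $L$ is affine with invertible linear part $A$ and its unique zero $x-A^{-1}F(x)$ lies in $N(x,X)\subseteq\operatorname{Int}(X)$, so $\deg(L,\operatorname{Int}(X),0)=\operatorname{sign}\det A\neq0$. Therefore $F$ has a zero $x^{*}\in\operatorname{Int}(X)$, which by the enclosure step lies in $N(x,X)$. Equivalently, one can apply Brouwer's fixed point theorem to the Newton-type self-map $g(y)=y-A^{-1}F(y)$, writing $g(y)=\bigl(x-A^{-1}F(x)\bigr)+\bigl(I-A^{-1}M_y\bigr)(y-x)$, but verifying $g(X)\subseteq X$ then needs an interval estimate on $I-A^{-1}[DF(X)]$ and is less transparent.

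I expect the crux to be precisely this existence step — more exactly, converting the \emph{set-inclusion} hypothesis $N(x,X)\subseteq X$ into a genuine nonvanishing-on-$\partial X$ (equivalently, self-map) statement, since, as in the one-dimensional case, no purely contractive/iterative argument suffices and one really needs a topological input. This is where the specific shape of the Newton operator — premultiplying the residual $[F(x)]$ by the enclosure $[DF(X)^{-1}]$ of the inverse Jacobians — is indispensable: it is exactly what lets the mean value identity collapse the equation $F(y)=0$ into the membership $y\in N(x,X)$, and hence into the hypothesis. The remaining points are routine bookkeeping: the interval-arithmetic estimate in the Brouwer-self-map variant, and the borderline case in which $N(x,X)$ meets $\partial X$ (then either $F$ already has a zero on $\partial X$, which lies in $N(x,X)$ and we are done, or the homotopy argument above still applies); the substantive content is the interplay of convexity of $X$ for the mean value theorem, invertibility of $[DF(X)]$, and the degree-theoretic existence principle.
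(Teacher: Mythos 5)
First, a remark on context: the paper does not prove this statement at all --- it is quoted verbatim from the cited reference (Alefeld's survey on interval Newton methods), so there is no in-paper proof to compare against; your proposal has to be judged on its own. Your main-case argument is sound: the componentwise mean value theorem on the convex box $X$ gives $F(y)=F(x)+M_y(y-x)$ with $M_y\in[DF(X)]$, the enclosure step ($F(x^*)=0$, $x^*\in X$ $\Rightarrow$ $x^*\in N(x,X)$) is correct, and under the extra assumption $N(x,X)\subset\operatorname{Int}(X)$ your homotopy $H_t(y)=(1-t)F(y)+t\bigl(F(x)+A(y-x)\bigr)$ is admissible (zeros of $H_t$ in $X$ are forced into $N(x,X)$ because $(1-t)M_y+tA$ stays in the convex interval matrix $[DF(X)]$), so $\deg(F,\operatorname{Int}(X),0)=\operatorname{sign}\det A\neq0$ and existence follows.

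The genuine gap is the borderline case $N(x,X)\cap\partial X\neq\emptyset$, which the hypothesis $N(x,X)\subset X$ permits and which you dismiss with the dichotomy ``either $F$ has a zero on $\partial X$, or the homotopy argument still applies.'' The second horn is unjustified: admissibility requires that \emph{no} $H_t$, $t\in[0,1]$, vanishes on $\partial X$, and your localization only places those zeros in $N(x,X)$, which now touches $\partial X$. In particular the unique zero $x-A^{-1}F(x)$ of the affine endpoint $H_1$ may itself lie on $\partial X$ even though $F$ has no boundary zero (already in dimension one: $X=[0,1]$, $x=1/2$, $F(1/2)=-1/4$, $F'\in[1/2,1]$ with a poor choice $A=1/2$), so the degree of $H_1$ is undefined and the homotopy invariance cannot be invoked; whether a cleverer choice of $A$ always repairs this is exactly what would need proof. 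The standard way to close this --- and the reason no interiority assumption appears in the theorem --- is to run Brouwer on the closed box with the operator you did not consider: $p(y)=x-M_y^{-1}F(x)$, where $M_y=\int_0^1 DF\bigl(x+t(y-x)\bigr)\,dt$ is the integral mean value matrix. This $M_y$ depends continuously on $y$, lies in the compact convex interval matrix $[DF(X)]$, hence is invertible with $M_y^{-1}\in[DF(X)^{-1}]$, so $p$ maps the compact convex set $X$ continuously into $N(x,X)\subset X$; a Brouwer fixed point $y^*=p(y^*)$ satisfies $F(x)+M_{y^*}(y^*-x)=0$, i.e.\ $F(y^*)=0$, and $y^*\in N(x,X)$, with no case analysis and no degree theory. (Your own Brouwer variant $y\mapsto y-A^{-1}F(y)$ with fixed $A$ is indeed not a self-map of $X$ under the stated hypotheses, as you suspected.)
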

The result can be extended to include a parameter as follows.
\begin{theorem}
    \label{thm:extended_interval_Newton_theorem}
    \cite{MR2652784} Let $X$ be an interval vector with nonempty interior, let $x\in\mathrm{Int(X)}$ be some point, let $I\subset\mathbb{R}$ be an interval and let $F:I\times\mathbb{R}^{n}\to\mathbb{R}^{n}$ be $C^{1}.$ If for 
    \[
        N(x,I,X):=x-\Big[D_{x}F(I,X)^{-1}\Big]\big[F(I,x)\big]
    \]
    we have $N(x,I,X)\subset X$, then there exists a $C^{1}$ function $x^{*}:I\to N(x,I,X)$ such that
    \[
        F\big(\lambda,x^{*}(\lambda)\big)=0\quad\text{for every}\quad\lambda\in I.
    \]
\end{theorem}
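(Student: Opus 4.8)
The plan is to reduce the parametrized statement to the non-parametrized interval Newton theorem stated just above, supplemented by the classical implicit function theorem. \emph{Existence for each fixed parameter.} Fix $\lambda\in I$ and consider the $C^1$ map $F_\lambda:=F(\lambda,\cdot)\colon\mathbb{R}^n\to\mathbb{R}^n$. Interval enclosures are monotone with respect to inclusion of the underlying sets: since $\{\lambda\}\times X\subset I\times X$ we have $D_xF_\lambda(X)\subset D_xF(I,X)$ and $F_\lambda(x)=F(\lambda,x)\in F(I,x)$, so the same interval matrix used to build $N(x,I,X)$, together with its inverse enclosure $[D_xF(I,X)^{-1}]$ and $[F(I,x)]$, are legitimate enclosures when applied to $F_\lambda$. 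With these choices the Newton operator for $F_\lambda$ satisfies
\[
  N_\lambda(x,X)\;=\;x-[D_xF(I,X)^{-1}]\,[F(I,x)]\;=\;N(x,I,X)\;\subset\;X ,
\]
and $x\in\mathrm{Int}(X)$ by hypothesis, so the non-parametrized theorem yields $x^*(\lambda)\in N_\lambda(x,X)=N(x,I,X)$ with $F(\lambda,x^*(\lambda))=0$.

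\emph{Uniqueness of the solution in $X$.} I need this so that $\lambda\mapsto x^*(\lambda)$ is single-valued. Suppose $a,b\in X$ satisfy $F_\lambda(a)=F_\lambda(b)=0$. Since $X$ is a box, hence convex, the segment $\{b+t(a-b):t\in[0,1]\}$ lies in $X$, and by the fundamental theorem of calculus
\[
  0=F_\lambda(a)-F_\lambda(b)=M(a-b),\qquad M:=\int_0^1 D_xF_\lambda\big(b+t(a-b)\big)\,dt .
\]
Every entry of $M$ is an average over $X$ of the corresponding entry of $D_xF_\lambda$, hence $M\in[D_xF(I,X)]$; since $[D_xF(I,X)]$ is invertible (implicit in the existence of $[D_xF(I,X)^{-1}]$), $M$ is invertible and $a=b$. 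Thus $x^*(\lambda)$ is the unique zero of $F_\lambda$ in $X$, and $x^*\colon I\to N(x,I,X)$ is well defined.

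\emph{$C^1$ regularity.} At any $\lambda_0\in I$ the Jacobian $D_xF(\lambda_0,x^*(\lambda_0))$ belongs to $[D_xF(I,X)]$ and is therefore invertible, so the implicit function theorem provides a $C^1$ branch $\widetilde x$ of zeros of $F$ defined near $\lambda_0$ with $\widetilde x(\lambda_0)=x^*(\lambda_0)$. For $\lambda$ close enough to $\lambda_0$, continuity gives $\widetilde x(\lambda)\in X$, and then $\widetilde x(\lambda)=x^*(\lambda)$ by the uniqueness just established; hence $x^*$ coincides with a $C^1$ map in a neighbourhood of $\lambda_0$. Letting $\lambda_0$ range over $I$ shows that $x^*$ is $C^1$ on $I$ (with one-sided derivatives at the endpoints, or after extending $F$ slightly in the $\lambda$-variable). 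I do not expect any step to be a real obstacle; the only point that needs a moment's care is checking that the local implicit-function branch cannot wander out of $X$ and back to a spurious zero — but this is exactly ruled out by the global uniqueness of the zero in $X$, so continuity pins the local branch to $x^*$ throughout a neighbourhood.
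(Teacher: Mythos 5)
The paper itself offers no proof of this theorem: it is quoted verbatim from the cited reference [MR2652784], so there is no internal argument to compare against. Your reduction is nonetheless the standard route and is essentially sound: for fixed $\lambda$ the sets $[D_{x}F(I,X)]$, $[D_{x}F(I,X)^{-1}]$ and $[F(I,x)]$ are legitimate (merely coarse) enclosures for $F_{\lambda}=F(\lambda,\cdot)$, so the unparametrized interval Newton theorem yields a zero $x^{*}(\lambda)\in N(x,I,X)$; the mean-value matrix $M=\int_{0}^{1}D_{x}F_{\lambda}\big(b+t(a-b)\big)\,dt$ lies entrywise in the interval matrix $[D_{x}F(I,X)]$, all of whose members are invertible by the very definition of the enclosure of the inverse, so the zero in the convex box $X$ is unique; and the implicit function theorem is the right tool for the $C^{1}$ claim.

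The one step that is not watertight as written is ``for $\lambda$ close enough to $\lambda_{0}$, continuity gives $\widetilde{x}(\lambda)\in X$''. That inference needs $x^{*}(\lambda_{0})\in\mathrm{Int}(X)$, which the hypotheses do not guarantee: $N(x,I,X)\subset X$ may touch $\partial X$ (take $n=1$, $X=[-1,1]$, $x=0$, $F(\lambda,y)=y-1$, for which $N(x,I,X)=\{1\}\subset X$ and the zero sits on the boundary), and then the local IFT branch could a priori leave $X$, so your global uniqueness \emph{in $X$} cannot be invoked for it. The repair uses only ingredients you already have: first prove that $x^{*}$ is continuous (if $\lambda_{n}\to\lambda_{0}$, any accumulation point of $x^{*}(\lambda_{n})$ in the compact set $X$ is a zero of $F(\lambda_{0},\cdot)$ in $X$, hence equals $x^{*}(\lambda_{0})$ by your uniqueness step), and then use the \emph{local} uniqueness furnished by the implicit function theorem: for $\lambda$ near $\lambda_{0}$ the point $x^{*}(\lambda)$ lies in the IFT neighbourhood of $x^{*}(\lambda_{0})$ and must therefore coincide with $\widetilde{x}(\lambda)$. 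With that adjustment (and the one-sided derivatives at the endpoints of $I$, which you already note), the argument is complete.
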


\begin{remark}
    Theorem \ref{thm:extended_interval_Newton_theorem} remains valid if $I$ is not an interval, but an interval vector.
\end{remark}
\subsection{Covering relations}
In this section we recall the results form \cite{ZGLICZYNSKI200432}, which provide the primary topological tools for obtaining our results. In this paper it will be sufficient for us to consider the two dimensional case where both unstable and stable directions of h-sets have dimensions one, so the definitions provided below are the special cases of the respective definitions from \cite{ZGLICZYNSKI200432}.

Let $B = [-1,1]\subset \mathbb{R}$ and $\partial B = \{ -1, 1 \}$.

\begin{definition}
    \label{def:h_set}
    An h-set is a pair $N = \left( |N|, c_N \right)$ where $|N|$ is a compact subset of $\mathbb{R}^2$ and $c_N : \mathbb{R}^2  \to  \mathbb{R}^2$ is a homeomorphism such that:
    \[
        c_N \left( B \times B \right) = |N|.
    \]
    The subset $|N|$ is called a support of an h-set.
\end{definition}

We denote (see Figure \ref{fig:covering}):
\[
    N_c = B \times B, \quad
    N_c^- = \partial B \times B, \quad 
    N_c^+ = B \times \partial B,
\]
\[
    N^- = c_N ( N_c^- ), \quad
    N^+ = c_N ( N_c^+ ),
\]
\[
    N^{l} = c_N \big( \{ -1 \} \times B \big), \quad
    N^{r} = c_N \big( \{ +1 \} \times B \big).
\]
We shall refer to $N^-$ as the exit set.

In this paper the supports of all the the h-sets we consider will be quadrangles. We will frequently refer to the support as the h-set and neglect to write out the homeomorphism $c_N$, which simply maps the square $N_c$ into a quadrangle, whenever it will be clear from the context which edges of the quadrangle represent the exit set.

\begin{figure}
\begin{center}
	\begin{overpic}[width=13cm]{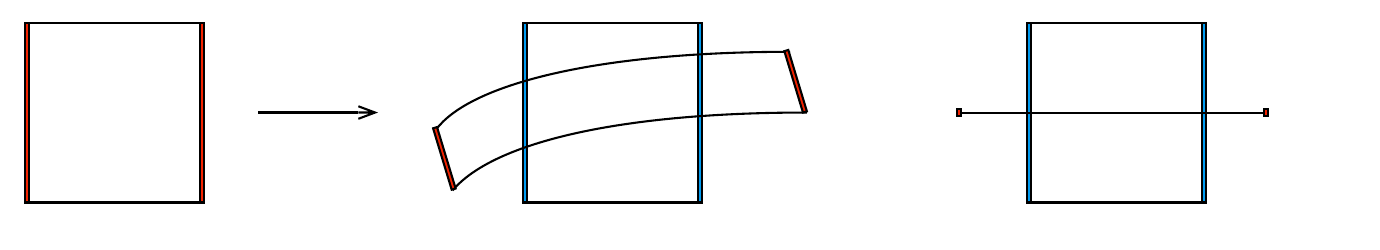}
	\put (7,0){$N_c$}
	\put (43,0){$M_c$}
	\put (79,0){$M_c$}
	\put (22,12){$f_c$}
	\put (55,16){$f_c(N_c)$}
	\put (90,12){$h(1,N_c)$}
	\end{overpic}
\end{center}
\caption{A depiction of a covering relation $N \xRightarrow{f} M$. The exit sets $N_c^-$ and $M_c^-$ are in red and blue, respectively. On the right we have the final result of the homotopy from Definition \ref{def:covering-relation}.\label{fig:covering}}
\end{figure}

\begin{definition}
    \label{def:covering-relation}
    Assume $N$, $M$ are h-sets. Let $f : N \to \mathbb{R}^n$ be a continuous map, such that the map $f_c = c_M^{-1} \circ f \circ c_N : N_c \to \mathbb{R}^2$ is well-defined and continuous. We say that
    \[
        N \xRightarrow{f} M
    \]
    ($N$ $f$-covers $M$) iff the following conditions are satisfied (fee Figure \ref{fig:covering}):
    
    \begin{enumerate}
        \item There exists a continuous homotopy $h : [0;1] \times N_c \to \mathbb{R}^2$, such that the following conditions hold true:
        \begin{align}
            \begin{aligned}
                h_0 &= f_c, \\
                h \big( [0;1], N_c^- \big) \cap M_c &= \emptyset, \\
                h \big( [0;1], N_c \big) \cap M_c^+ &= \emptyset. \\
            \end{aligned}
        \end{align}
        \item There exists a real number $a > 1$, such that:
        \[
            h(1, (p, q)) = (ap, 0), \quad p,q \in B.
        \]
    \end{enumerate}
\end{definition}

In our computer assisted proofs, we assert the existence of the covering relations by checking the following conditions:
\begin{align}
    \label{eq:covering_relation_conditions}
    \begin{aligned}
        \pi_{2}\big(f_c(N_c)\big) & \subset B,\\
        \pi_{1}\big(f_c(N_c^{l})\big) & < -1,\\
        \pi_{1}\big(f_c(N_c^{r})\big) & > +1,
    \end{aligned}
\end{align}
which we refer to as the contraction condition, the left expansion condition and the right expansion condition respectively.

\begin{definition}\label{def:NT}
    Let $N$ be an h-set. We define the h-set $N^T$ such that the support of $N^T$ is equal to the support of $N$ and  the homeomorphism $c_{N^T}$ is such that:
    \[
        c_{N^T} = c_N \circ j,
    \]
    where $j : \mathbb{R}^2 \to \mathbb{R}^2$ is given by $j(p,q) = (q,p)$.
\end{definition}

\begin{definition}\label{def:back-covering-relation}
    Assume $N$, $M$ are h-sets. Let $g$ be a function such that $g^{-1} : M \to \mathbb{R}^2$ is well defined and continuous. We say that
    \[
        N \xLeftarrow{g} M
    \]
    ($N$ $g$-backcovers $M$) iff $M^T \xRightarrow{g^{-1}} N^T$.
\end{definition}

The following theorem will be our main tool for constructing our oscillating motions.
\begin{theorem}
    \label{thm:generic_covering_relations}
    \cite{ZGLICZYNSKI200432} Let $k \in \mathbb{N}$, $N_0, ..., N_{k-1}$ be a sequence of h-sets, let $N_k=N_0,$ and $f_{1}, \dots, f_{k}$ be a sequence of functions such that
    \[
    	N_{i-1} \xLeftarrow{f_i} N_{i} \qquad \mbox{or} \qquad N_{i-1} \xRightarrow{f_i} N_{i} \qquad \mbox{for }i\in\{1,\ldots,k\}.
    \]
    Then, there exists a point $x$ in the interior of $N_0$ such that:
    \[
        f_{i}\circ f_{i-1}\circ...\circ f_{1}(x) \in N_i, \qquad \mbox{for }i=1,\ldots,k-1,
    \]
    and
    \[
        f_{k}\circ f_{k-1}\circ...\circ f_{1}(x)=x.
    \]
\end{theorem}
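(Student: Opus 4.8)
The plan is to recast the conclusion as a zero‑finding problem on the product
$\mathcal N=|N_0|\times\cdots\times|N_{k-1}|\subset(\mathbb R^2)^k$ of all the h‑sets, and then to evaluate a Brouwer degree by homotoping, slot by slot, to the model maps supplied by Definition~\ref{def:covering-relation}. To the $i$‑th link I attach a \emph{coupling map}: for a forward link $N_{i-1}\xRightarrow{f_i}N_i$ it is $(x_{i-1},x_i)\mapsto f_i(x_{i-1})-x_i$, and for a backcovering link $N_{i-1}\xLeftarrow{f_i}N_i$ (i.e.\ $N_i^T$ $f_i^{-1}$‑covers $N_{i-1}^T$) it is $(x_{i-1},x_i)\mapsto f_i^{-1}(x_i)-x_{i-1}$; here indices are cyclic and $x_k=x_0$. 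Stacking the $k$ couplings gives a continuous $\mathcal F:\mathcal N\to(\mathbb R^2)^k$. A zero $(x_0^\ast,\dots,x_{k-1}^\ast)$ of $\mathcal F$ in $\mathrm{Int}\,\mathcal N$ is precisely a periodic orbit of the required kind: on every link $f_i(x_{i-1}^\ast)=x_i^\ast$ (apply $f_i$ to the backward identity when needed), each $x_i^\ast$ lies in $\mathrm{Int}\,|N_i|$, the composition $f_k\circ\cdots\circ f_1$ makes sense along it, and the cyclic closure gives $f_k\circ\cdots\circ f_1(x_0^\ast)=x_0^\ast$. So it suffices to show $\deg(\mathcal F,\mathrm{Int}\,\mathcal N,0)\neq0$.

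First I would move to coordinates, using the chart $c_{N_i}$ at a product coordinate both of whose adjacent links are forward and the transposed chart $c_{N_i^T}$ of Definition~\ref{def:NT} whenever an adjacent link is a backcovering; the transpose is forced by the way backcovering is defined. The charts are chosen so that, uniformly, the exit/unstable direction of each relevant covering relation becomes the first coordinate of $B\times B$. Then $\mathcal N$ becomes the cube $\mathcal C=(B\times B)^k$ and $\mathcal F$ a map $\widehat{\mathcal F}$ on $\mathcal C$. Each link now supplies, through Definition~\ref{def:covering-relation} applied to the appropriate charted pair, a homotopy of the charted $f_i$ (resp.\ $f_i^{-1}$) to the model map $(p,q)\mapsto(a_ip,0)$ with $a_i>1$, during which the image of the exit face avoids the target cube and the image of the whole cube avoids the target's stable (top and bottom) faces. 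Gluing these slot by slot yields a homotopy $\mathcal H$ on $[0,1]\times\mathcal C$ from $\widehat{\mathcal F}$ to an explicit model map $\mathcal G$ built from the maps $(p,q)\mapsto(a_ip,0)$, the cyclic shift, and subtraction of the identity.

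The two exclusion clauses of Definition~\ref{def:covering-relation} are exactly what stops $\mathcal H(t,\cdot)$ from vanishing on $\partial\mathcal C$. A boundary point of $\mathcal C$ has some slot $x_j$ lying in an exit face $N_c^-$ or in an entrance face $N_c^+$. In the first situation the first exclusion clause of the outgoing link $j+1$ keeps the image of that exit face off the cube $N_{c,j+1}$, so the $(j+1)$‑st block of $\mathcal H$ is nonzero; in the second situation the second exclusion clause of the incoming link $j$ keeps the image off $N_{c,j}^+$, so the $j$‑th block is nonzero. Hence $\mathcal H$ is an admissible homotopy and $\deg(\widehat{\mathcal F},\mathrm{Int}\,\mathcal C,0)=\deg(\mathcal G,\mathrm{Int}\,\mathcal C,0)$. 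The latter is routine: the zero set of $\mathcal G$ is just the center of $\mathcal C$ (the stable components force all $q$'s to vanish, and the unstable components give a cyclic linear relation among the $p$'s with all multipliers $>1$, hence only the trivial solution), and the local index there equals $\pm1\neq0$. Therefore $\widehat{\mathcal F}$, and so $\mathcal F$, has a zero in the interior of its domain; the first factor of that zero is the point $x$ of the statement.

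The main obstacle is the bookkeeping for sequences that mix the two kinds of relation. One must choose, coordinate by coordinate, the plain or the transposed chart so that all the local homotopies glue with a single ``unstable $=$ first coordinate'' convention, and so that the faces of $\mathcal C$ split cleanly into exit‑type and entrance‑type faces for the argument of the previous paragraph; one must also verify that a backcovering link genuinely contributes a model block of the \emph{expanding} form $(p,q)\mapsto(a_ip,0)$ in its transposed coordinates. Once these conventions are fixed consistently, the degree computation and the verification of the orbit conditions are routine. (When every link happens to be a forward covering relation, an alternative is to use that covering relations compose, reduce to a single self‑covering $N_0\xRightarrow{F}N_0$ with $F=f_k\circ\cdots\circ f_1$, and apply the corresponding fixed‑point statement together with an inductive restriction of domains to recover the intermediate membership conditions; but the product‑space argument is what extends uniformly to the mixed case.)
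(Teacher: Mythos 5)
This theorem is not proved in the paper at all --- it is quoted from the Gidea--Zgliczy\'nski reference \cite{ZGLICZYNSKI200432} --- and your argument is essentially the proof given there: couple the maps on the product of the charted h-sets, homotope each block to its linear model via the homotopy in Definition \ref{def:covering-relation}, rule out zeros on the boundary using the two exclusion clauses, and conclude from a nonzero local Brouwer degree of the linear model. The one point where your write-up does not quite work as stated is the bookkeeping you yourself flag as the main obstacle: at a slot $i$ where a forward link meets a backcovering link there is \emph{no} single choice of plain-or-transposed chart that is natural for both adjacent blocks (one block sees the slot through $c_{N_i}$, the other through $c_{N_i^T}$), so you cannot arrange a uniform ``unstable $=$ first coordinate'' convention; instead the swap $j(p,q)=(q,p)$ must enter one of the two blocks explicitly. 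This is harmless but has to be carried through: the boundary exclusion still works because a point of $N_{i,c}^-$ is a point of $(N_i^T)_c^+$ and conversely, so at every boundary point one of the two adjacent blocks' clauses applies (at direction-change slots the variable is a ``double source'' or ``double target'' of the known maps, not a source of one block and target of the other), and the linear model map, which now contains swapped blocks, must be checked to be nonsingular --- it is, since its equations propagate cyclically and the expansion constants $a_i>1$ force the trivial solution, giving degree $\pm1$. With that correction your proof coincides with the standard one; your handling of the backcovering links (defining $f_i(x_{i-1}^\ast)=x_i^\ast$ from $f_i^{-1}(x_i^\ast)=x_{i-1}^\ast$, since only $f_i^{-1}$ is assumed defined) is the intended reading of the statement.
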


The above theorem can be extended (we will use both versions in our arguments) by introducing the following two notions.
\begin{definition}\label{def:horizontal-disc} Let $N$ be an h-set. Let $b:[-1,1]\to |N|$ be continuous and let $b_c=c_N^{-1}\circ b$. We say that $b$ is a {\em horizontal disc} in $N$ if there is a homotopy $h:[0,1]\times[-1,1]\to N_c$ such that
\begin{align*}
	h_0&=b_c, \\
	h_1(x)& = (x,0), \quad \mbox{for all }x\in [-1,1], \\
	h([0,1],x)&\subset N_c^-, \qquad \mbox{for }x\in\{-1,1\}.
\end{align*}
\end{definition}
\begin{definition}\label{def:vertical-disc} Let $N$ be an h-set. Let $b:[-1,1]\to |N|$ be continuous and let $b_c=c_N^{-1}\circ b$. We say that $b$ is a {\em vertical disc} in $N$ if there is a homotopy $h:[0,1]\times[-1,1]\to N_c$ such that
\begin{align*}
	h_0&=b_c, \\
	h_1(y)& = (0,y), \quad \mbox{for all }y\in [-1,1], \\
	h([0,1],y)&\subset N_c^+, \qquad \mbox{for }y\in\{-1,1\}.
\end{align*}
\end{definition}
With these notions we can formulate the following result:
\begin{theorem}\cite{MR2494688}
    \label{thm:cover-discs}
    Let $k \in \mathbb{N}$, $N_0, ..., N_{k-1}$ be a sequence of h-sets and $f_{1}, \dots, f_{k}$ be a sequence of functions such that
    \[
    	N_{i-1} \xLeftarrow{f_i} N_{i} \qquad \mbox{or} \qquad N_{i-1} \xRightarrow{f_i} N_{i} \qquad \mbox{for }i\in\{1,\ldots,k\}.
    \]
    Assume that $b_h$ is a horizontal disc in $N_0$ and $b_v$ is a vertical disc in $N_k$.
    
    Then, there exists a point $x$ in the interior of $N_0$ such that:
    \begin{align*}
    	x&=b_h(t) &\mbox{for some }t\in (-1,1), \\
	    f_{i}\circ f_{i-1}\circ...\circ f_{1}(x) &\in N_i, &\mbox{for }i=1,\ldots,k-1,\,\,\, \\
	    f_{k}\circ f_{k-1}\circ...\circ f_{1}(x)&=b_v(s) &\mbox{for some }s\in(-1,1).
    \end{align*}
  
\end{theorem}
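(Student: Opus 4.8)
The statement to prove is Theorem~\ref{thm:cover-discs}, which extends Theorem~\ref{thm:generic_covering_relations} by replacing the ``periodic point'' conclusion with one involving a prescribed horizontal disc at the start and vertical disc at the end.

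\textbf{Overall strategy.} The plan is to reduce the theorem to the existence of a zero of a suitable continuous map via a degree-theoretic / homotopy argument, in the same spirit as the proof of Theorem~\ref{thm:generic_covering_relations} in \cite{ZGLICZYNSKI200432}. First I would pass to coordinate charts: replace each $N_i$ by $N_{i,c}=B\times B$ using the homeomorphisms $c_{N_i}$, and replace each map $f_i$ (or $f_i^{-1}$ in the backcovering case) by its chart representative $f_{i,c}=c_{N_i}^{-1}\circ f_i\circ c_{N_{i-1}}$, so that every link in the chain is a genuine covering relation $B\times B \xRightarrow{} B\times B$ between standard squares. For the backcovering links one uses Definition~\ref{def:back-covering-relation}: $N_{i-1}\xLeftarrow{f_i}N_i$ means $N_i^T\xRightarrow{f_i^{-1}}N_{i-1}^T$, and the transposition $j(p,q)=(q,p)$ turns this into a forward covering relation on the transposed charts. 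Thus after relabelling we may assume all relations are forward covering relations between copies of $B\times B$, at the cost of possibly composing with $j$ at certain nodes; the horizontal disc $b_h$ in $N_0$ and the vertical disc $b_v$ in $N_k$ are transported to, respectively, a horizontal and a vertical disc in the corresponding standard squares (a transposition swaps the two notions, which is exactly why Definitions~\ref{def:horizontal-disc} and \ref{def:vertical-disc} were introduced as a matched pair).

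\textbf{The main construction.} On the chart side, define the map
\[
F(x) = \bigl(\, \pi_1\bigl(f_{1,c}(x)\bigr) - \ast,\ \pi_2\bigl(f_{2,c}\circ f_{1,c}(x)\bigr),\ \dots \,\bigr)
\]
more carefully: let $x_0\in B\times B$ be the unknown, set $x_i = f_{i,c}(x_{i-1})$, and build a single continuous function $G$ on the product $\prod_{i=0}^{k-1}(B\times B)$ of all the intermediate points whose zero set encodes simultaneously (i) $x_i = f_{i,c}(x_{i-1})$ for each $i$, (ii) $x_0$ lies on the horizontal disc $b_h$, parametrised by $t\mapsto b_{h,c}(t)$, and (iii) the final image $x_k=f_{k,c}(x_{k-1})$ lies on the vertical disc $b_v$. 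The homotopies supplied by the covering relations (item 1 of Definition~\ref{def:covering-relation}, the condition $h([0,1],N_c^-)\cap M_c=\emptyset$ and $h([0,1],N_c)\cap M_c^+=\emptyset$) together with the homotopies defining the horizontal and vertical discs allow us to homotope $G$, without zeros on the boundary of the product domain, to a linear/affine map of the form used in item 2 of Definition~\ref{def:covering-relation}: on each covering link $h(1,(p,q))=(ap,0)$ with $a>1$, on the horizontal-disc factor $h_1(t)=(t,0)$, on the vertical-disc factor $h_1(s)=(0,s)$. For this model map one computes the Brouwer degree directly: the expanding components contribute $\pm1$ via the $a>1$ stretching, the horizontal disc pins down the first coordinate of $x_0$ while leaving the second free, the vertical disc pins down the second coordinate of the last image while leaving the first free, and the bookkeeping matches so that the total degree is $\pm1\neq 0$. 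Hence the model map has a zero, and by homotopy invariance of the degree so does $G$, giving the desired orbit with $x_0=b_h(t)$, $t\in(-1,1)$, $f_i\circ\cdots\circ f_1(x_0)\in N_i$, and $f_k\circ\cdots\circ f_1(x_0)=b_v(s)$, $s\in(-1,1)$; the openness (strict membership in interiors, $t,s\in(-1,1)$) comes from the fact that the boundary conditions in the definitions keep the zero away from $\partial$.

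\textbf{Anticipated obstacle.} The routine parts are the chart reductions and the degree computation for the explicit model map. The delicate point is the \emph{nonvanishing of $G$ on the boundary of the product domain throughout the homotopy}: one must verify that whenever some intermediate point $x_{i-1}$ reaches the exit set $N_{i-1,c}^-=\partial B\times B$, or the candidate final point leaves through the ``top'' $N_{k,c}^+$, or the disc parameters $t,s$ hit $\pm1$, the corresponding component of $G$ is forced to be nonzero --- and that these exclusions are compatible simultaneously across all nodes, including the transposed nodes coming from backcovering links. This is exactly the bookkeeping that makes the horizontal/vertical disc pair interlock correctly with the alternating $\xRightarrow{}$ / $\xLeftarrow{}$ pattern, and it is where one must be careful to invoke the right piece (exit-set condition versus entry-set condition) of Definition~\ref{def:covering-relation} at each link. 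Once that boundary analysis is in place, the result follows as in \cite{MR2494688}, of which this is essentially a restatement adapted to our two-dimensional setting; I would cite that reference for the general argument and only spell out the reduction to our special case.
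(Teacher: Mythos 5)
The paper does not prove Theorem \ref{thm:cover-discs} at all — it is quoted directly from \cite{MR2494688} — and your sketch follows essentially the same degree-theoretic argument as that reference (chart reduction, one unknown per node on a product of squares, homotopy to the model maps from Definition \ref{def:covering-relation}, nonvanishing Brouwer degree, with the transposition $j$ converting backcoverings and swapping horizontal/vertical discs), so in approach you match the source the paper relies on. The one phrase to correct is the claim that "after relabelling we may assume all relations are forward covering relations": at a backcovering link the chart map goes in the reverse direction along the chain, so the sequence cannot be turned into a single forward composition — but your product-domain formulation with intermediate points as unknowns (encoding $x_{i-1}=f_{i,c}^{-1}(x_i)$ at those links) is exactly the device the cited proof uses, so the sketch is sound once that sentence is read in that spirit.
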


\section{Poincar\'e maps for a flow restricted to a constant energy level}
\label{sec:Poincare-maps}
In our work we introduce Poincar\'e sections and associated Poincar\'e maps in order to reduce the original continuous time system to a discrete time system. In this section we present the associated notation which we will use later on.

Consider a flow $\phi_{t}(x)$ in\footnote{The method described in this section can be extended in a natural way to $\mathbb{R}^n$ but we stick to $\mathbb{R}^{4}$ since this is the dimension of our problem.} $\mathbb{R}^{4}$ and a section $\Sigma\subset\mathbb{R}^{4}$. (In our application we will work in $\mathbb{R}^{4}$, and we will use $\Phi_{s}^{i,h}$ as the flow. The $\Sigma$ which we consider will be three dimensional affine sections in $\mathbb{R}^{4}$.) We denote by $T_{\Sigma}(w)$ the time needed to approach the section along the flow
\begin{equation}
    \label{eq:flow_time_to_section}
    T_{\Sigma}(w)=\inf\{t>0:\phi_{s}(w)\in\Sigma\}.
\end{equation}
Whenever we use the notation $T_{\Sigma}(w)$ we implicitly assume that $w\in\mathbb{R}^{4}$ lies in a domain where $T_{\Sigma}$ is well defined. We define a map $P_{\Sigma}:\mathbb{R}^{4}\to\Sigma$ as
\[
    P_{\Sigma}(w)=\phi_{T_{\Sigma}(w)}(w).
\]
We will refer to $P_{\Sigma}$ as the ``Poincar\'e map" to the section $\Sigma$.

\subsection{Parameterization of a section restricted to a constant energy level}
\label{sec:Sections-on-energy}
For the discussion from this section consider a Hamiltonian $\Gamma:\mathbb{R}^{4}\to\text{\ensuremath{\mathbb{R}}}$ and a vector field 
\[
    \dot{w} = J\nabla \Gamma(w).
\]
(Here $\Gamma$ can be some abstract Hamiltonian, but we choose the notation to coincide wit $\Gamma_{i,h}$ since we will apply this setup to the flow in regularized coordinates.)

We shall consider the following local coordinates at a chosen point $w\in\mathbb{R}^{4}$. Consider $A\in\mathbb{R}^{4\times4}$ of the form 
\begin{equation}
    \label{eq:A-form}
    A=\left[
        \hat{u} \quad
        \hat{s} \quad
        J\nabla \Gamma(w)  \quad
        \nabla \Gamma(w)  \right],
\end{equation}
where the above notation means that the vectors $\hat{u}$, $\hat{s}$, $J\nabla \Gamma(w)$, $\nabla \Gamma(w)$ are the columns of the matrix $A$. Intuitively, the notation should be understood as follows. The $\hat{u}$ stands for the ``unstable" coordinate at $w$, $\hat{s}$ stands for the ``stable" coordinate at $w$. The vector $\nabla \Gamma(w)$ is associated with the ``direction of the energy" and $J\nabla \Gamma(w)$ is the vector field at $w$.
\begin{remark}
    In our applications the point $w$ will be positioned at (or close to) some hyperbolic periodic orbit. The vectors $\hat{u}$ and $\hat{s}$ are chosen to correspond to the linearized expansion and contraction along the periodic orbit at $w$. (The $\hat u, \hat s$ do not need to match perfectly with the unstable and stable bundles. For our needs it will be sufficient if they are ``roughly" aligned.)
\end{remark}

At the chosen point $w$ we consider a local change of coordinates $\Lambda:\mathbb{R}^{4}\to\mathbb{R}^{4}$ defined as 
\begin{equation}
    \label{eq:Lambda-def}
    \Lambda(x) = w + Ax.
\end{equation}
We define a section at $w$ as 
\begin{equation}
    \label{eq:section-choice}
    \Sigma=\big\{ p\in\mathbb{R}^{4}:\langle J\nabla \Gamma(w),p-w\rangle=0\big\}. 
\end{equation}
We see that $\Sigma$ is a three dimensional affine section at $w$ in $\mathbb{R}^{4}$. Observe that by the definition of $\Sigma$, the flow in the neighborhood of $w$ will be transversal to $\Sigma$.

We will investigate the section $\Sigma$ restricted to $\{\Gamma=0\}$. To do so we implicitly define $E:\mathbb{R}^{2}\to\mathbb{R}^{2}$ to be a function satisfying
\begin{equation}
    \label{eq:E-implicit-def}
    \Gamma(\Lambda(z,E(z)))=0, \qquad\text{and}\qquad \Lambda(z,E(z))\in\Sigma.
\end{equation}

\begin{remark}
    The function $E$ depends on the choice of $w$ and $A$.
\end{remark}
\begin{remark}
    \label{rem:E-from-Interval-Newton}
    Note that if we fix $z\in\mathbb{R}^{2}$ then $E(z)$ is the zero of 
    \[
        \mathbb{R}^{2} \ni e\mapsto\left(\Gamma(\Lambda(z,e)),
        \langle J\nabla \Gamma(w),A(z,e)\rangle\right)\in\mathbb{R}^{2}.
    \]
    This means that for a fixed $w$, fixed $A$ and for fixed $z$ the $E(z)$ can be computed in interval arithmetic by means of the interval Newton method.
\end{remark}
What we have achieved by introducing $E$ is that
\[
    \Lambda(z,E(z))\in\{\Gamma=0\}\cap\Sigma.
\]
The function $\psi:\mathbb{R}^{2}\to\{\Gamma=0\}\cap\Sigma$ defined as
\begin{equation}
    \label{eq:local-parametrization-choice}
    \psi(z):=\Lambda(z,E(z))
\end{equation}
is a parametrization of $\{\Gamma=0\}\cap\Sigma$. We can think of $z$ as our local coordinates on $\{\Gamma=0\}\cap\Sigma$. 

Intuitively, restricting to the section $\Sigma$ reduces the dimension from $4$ to $3$, and restricting to the fixed energy level further reduces the dimension to $2$. The $z$ are these remaining two dimensional coordinates.

\begin{remark}
    \label{rem:local-map-choice}
    The function $\psi$ is uniquely determined by the choice of the triple $(w,\hat u, \hat s)$.
\end{remark}

Below lemma gives us the inverse of $\psi$.
\begin{lemma}
    \label{lem:inverse-psi}
    For $p\in\{\Gamma=0\}\cap\Sigma$ 
    \[
        \psi^{-1}(p)=\pi_{1,2}\Lambda^{-1} (p).
    \]
\end{lemma}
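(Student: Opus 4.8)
The plan is to verify that $\psi^{-1}$, as a map from $\{\Gamma=0\}\cap\Sigma$ to $\mathbb{R}^2$, is literally given by the composition $\pi_{1,2}\circ\Lambda^{-1}$, where $\pi_{1,2}$ is the projection onto the first two coordinates. Since $\psi(z)=\Lambda(z,E(z))$ by definition \eqref{eq:local-parametrization-choice}, and $\Lambda(x)=w+Ax$ is an affine bijection (the matrix $A$ in \eqref{eq:A-form} is invertible, as $\hat u,\hat s,J\nabla\Gamma(w),\nabla\Gamma(w)$ are chosen to be linearly independent), the map $\Lambda^{-1}$ is well-defined and $\pi_{1,2}\circ\Lambda^{-1}$ makes sense on all of $\mathbb{R}^4$.

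First I would show the easy direction: for any $z\in\mathbb{R}^2$,
\[
    \pi_{1,2}\big(\Lambda^{-1}(\psi(z))\big)=\pi_{1,2}\big(\Lambda^{-1}(\Lambda(z,E(z)))\big)=\pi_{1,2}(z,E(z))=z,
\]
so $\pi_{1,2}\circ\Lambda^{-1}$ is a left inverse of $\psi$. Next I would show it is also a right inverse on the relevant domain: take $p\in\{\Gamma=0\}\cap\Sigma$ and write $\Lambda^{-1}(p)=(z,e)$, i.e. $p=\Lambda(z,e)$ with $z=\pi_{1,2}\Lambda^{-1}(p)$ and $e$ the last two coordinates. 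The membership $p\in\{\Gamma=0\}\cap\Sigma$ gives exactly the two defining relations $\Gamma(\Lambda(z,e))=0$ and $\Lambda(z,e)\in\Sigma$, which by the implicit definition \eqref{eq:E-implicit-def} of $E$ force $e=E(z)$ — here one uses that $E(z)$ is characterized as the (unique, in the relevant neighborhood) solution of the system in Remark \ref{rem:E-from-Interval-Newton}. Hence $p=\Lambda(z,E(z))=\psi(z)$, so $\psi(\pi_{1,2}\Lambda^{-1}(p))=p$. Combining both directions yields $\psi^{-1}=\pi_{1,2}\Lambda^{-1}$ on $\{\Gamma=0\}\cap\Sigma$.

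The only genuinely delicate point is the uniqueness of $E(z)$: the implicit function $E$ is defined by \eqref{eq:E-implicit-def} only locally, so the argument that $p\in\{\Gamma=0\}\cap\Sigma$ implies $e=E(z)$ requires that $(z,e)$ lies in the domain/range on which $E$ was constructed (e.g. where the interval Newton method of Remark \ref{rem:E-from-Interval-Newton} applies). I would state the lemma with the tacit understanding — already present throughout this subsection — that we restrict to the neighborhood of $w$ where $\psi$ is a diffeomorphism onto its image; on that set the second coordinate block of $\Lambda^{-1}(p)$ is pinned down by the two scalar equations, and everything goes through. Modulo this localization, the proof is a one-line computation in each direction.
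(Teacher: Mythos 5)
Your proposal is correct and follows essentially the same route as the paper's proof: write $\Lambda^{-1}(p)=(z,e)$, observe that $\Lambda(z,e)=p\in\{\Gamma=0\}\cap\Sigma$ gives exactly the defining conditions (\ref{eq:E-implicit-def}), conclude $e=E(z)$ and hence $p=\psi(z)$. Your extra caveat about the local uniqueness of $E$ (so that satisfying the defining conditions really pins down $e=E(z)$) is a point the paper leaves implicit, but the core argument is identical.
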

\begin{proof}
    Consider $e$ to be defined as $e=\pi_{3,4}\Lambda^{-1}(p).$ It is enough to show that $e=E(z)$, since if this is the case then we will have that
    \[
        p = \Lambda(z,e) = \Lambda(z,E(z)) = \psi(z).
    \]
    To show that $e=E(z)$ we need to ensure (\ref{eq:E-implicit-def}), namely that
    \[
        \Gamma(\Lambda(z,e))=0,\qquad\text{and}\qquad \Lambda(z,e) \in \Sigma.
    \]
    Both above conditions are clearly satisfied since $\Lambda(z,e) = p \in \{\Gamma=0\}\cap\Sigma$. This concludes our proof.
\end{proof}

The above introduced $\psi$ might not seem the most direct approach for introducing a parametrization of a section on a fixed energy level, but it serves our purposes for the following reasons. The change of coordinates $\psi$ is readily computable by means of the Interval Newton method (see Remark \ref{rem:E-from-Interval-Newton}), and also $\psi^{-1}$ is straightforward to compute by means of Lemma \ref{lem:inverse-psi}. Moreover, in the local coordinates given by our parametrizations the Poincar\'e maps will be well aligned with the dynamics. This is discussed in more depth in the following subsection.

\subsection{Covering relations between sections} 
Let $P : \Sigma_1 \to \Sigma_2$ be a Poincar\'e map where $\Sigma_1$ and $\Sigma_2$ are two sections, with the local coordinates $\psi_1$ and $\psi_2$ constructed on $\Sigma_1\cap \{\Gamma=0\}$ and $\Sigma_2\cap \{\Gamma=0\}$, respectively, as described in the previous subsection. The Poincar\'e (or section-to-section) map expressed in the local coordinates, i.e.
\begin{equation}
    \label{eq:local-poinc-map}
    P_{21}:={(\psi_2)}^{-1} \circ P \circ \psi_1 : \mathbb{R}^2 \to \mathbb{R}^2
\end{equation} 
will be well aligned with the dynamics. What we mean by this is that the image of the point $(0,0)$ under local Poincar\'e map will be close to $(0,0)$, and for $z=(z_1, z_2)$, the first coordinate $z_1$ will be expanding for the local map and $z_2$ will be contracting. Also the derivatives of $P_{21}$ will be close to diagonal. 

\begin{definition}
    \label{def:h_set_on_section}
    Let $\mathcal{N} := (N, \psi)$ be a pair that consists of an h-set $N$ and a parametrization $\psi$ of section $\{\Gamma = 0\} \cap \Sigma$. We shall refer to object $\mathcal{N}$ as an h-set $N$ on section $\Sigma$.
\end{definition}

Let us consider $\mathcal{N}_1 = (N_1, \psi_1)$ and $\mathcal{N}_2 = (N_2, \psi_2)$. We will use the notation:
\begin{equation}
    \label{eq:sec-covering-1}
    \mathcal{N}_1 \xRightarrow{P} \mathcal{N}_2 \qquad\text{and}\qquad
    \mathcal{N}_1 \xLeftarrow{P} \mathcal{N}_2
\end{equation}
to denote the covering relations
\begin{equation}
    \label{eq:sec-covering-2}
    N_1 \xRightarrow{P_{21}} N_2 \qquad\text{or}\qquad
    N_1 \xLeftarrow{P_{21}} N_2 
\end{equation} 
respectively, when it is clear from the context with which Poincar\'e sections and with which local coordinates $P$ is associated.

Consider now the symmetry $S$ of the PCR3BP (\ref{eq:S_symmetry}). Notice, that if $\Gamma\circ S = \Gamma$, which is the case in the PCR3BP, then for a given $\psi:\mathbb{R}^2\to \{\Gamma =0\} \cap \Sigma$, $S\psi$ is a parameterization of $S\Sigma\cap\left\{\Gamma=0\right\}.$ For $\mathcal{N} = (N,\psi)$ we define:
\[
    S\mathcal{N} := (N^T, S\psi).
\]

\begin{lemma}
    \label{lem:S_backsymmetry}
    Assume that the system is $S$-symmetric, meaning that
    \[
        \Gamma\circ S = \Gamma \qquad \mbox{and} \qquad\nabla\Gamma \circ S=-S \circ \nabla\Gamma. 
    \]
    Consider  $\mathcal{N}_1 = (N_1, \psi_1)$ and $\mathcal{N}_2 = (N_2, \psi_2)$, where $N_1$ and $N_2$ are two h-sets on the sections $\Sigma_{1}$ and $\Sigma_{2}$, respectively. The sections are parameterized by $\psi_1$ and $\psi_2$, respectively. Consider also two Poincar\'e maps $P:\Sigma_{1}\rightarrow\Sigma_{2}$ and $P^{S}:S\Sigma_{2}\rightarrow S\Sigma_{1}$. If
    \begin{equation}
        \label{eq:Poinc-cover}
        \mathcal{N}_1 \xRightarrow{P} \mathcal{N}_2
    \end{equation}
    then
    \begin{equation}
        \label{eq:symmetry-back-covering}
        S \mathcal{N}_2 \xLeftarrow{P^S} S \mathcal{N}_1.
    \end{equation}
\end{lemma}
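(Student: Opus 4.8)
The plan is to unwind the definition of back-covering (Definition \ref{def:back-covering-relation}), which says that \eqref{eq:symmetry-back-covering} is equivalent to the forward covering relation
\[
    (S\mathcal{N}_1)^T \xRightarrow{(P^S)^{-1}} (S\mathcal{N}_2)^T.
\]
Since $S\mathcal{N} = (N^T, S\psi)$ and $(N^T)^T = N$ (the map $j$ is an involution), we have $(S\mathcal{N}_1)^T = (N_1, Sj\psi_1)$ and likewise for $\mathcal{N}_2$; so the whole statement will follow once I show that the map $(P^S)^{-1}$, expressed in the parametrizations attached to these h-sets, coincides with $P_{21}$ (the map in \eqref{eq:local-poinc-map} whose covering relation $N_1 \xRightarrow{P_{21}} N_2$ is exactly the hypothesis \eqref{eq:Poinc-cover}). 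In other words, the combinatorial bookkeeping with $S$, $j$ and the transpose operation is arranged precisely so that the symmetry in the phase space becomes the identity in local coordinates, up to the inevitable swap of exit/entry sets that the $T$ operation records.

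First I would establish the phase-space identity relating $P$ and $P^S$. Using the time-reversing symmetry of the flow, $S\big(\Phi_s^{i,h}(w)\big) = \Phi_{-s}^{i,h}\big(S(w)\big)$ (stated in the excerpt for both the original and regularized flows), together with the fact that $S$ maps the section $\Sigma_1$ to $S\Sigma_1$ and $\Sigma_2$ to $S\Sigma_2$ — which requires checking that the defining functional of the section, $\langle J\nabla\Gamma(w), p - w\rangle$, transforms correctly under $S$; this is where the hypothesis $\nabla\Gamma\circ S = -S\circ\nabla\Gamma$ (and $S$ being an involutive linear isometry with $SJS = -J$, hence $SJ\nabla\Gamma(w) = J\nabla\Gamma(Sw)$ up to sign) is used — I would deduce that if $P(p) = \Phi_{T}^{i,h}(p)$ then $S(p) = P^S\big(S(P(p))\big)$, i.e. $P^S = S\circ P^{-1}\circ S$ as maps $S\Sigma_2 \to S\Sigma_1$. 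Consequently $(P^S)^{-1} = S\circ P\circ S$.

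Next I would pass to local coordinates. The parametrization attached to $(S\mathcal{N}_j)^T = (N_j, Sj\psi_j)$ is $Sj\psi_j$. Writing out $\big((P^S)^{-1}\big)$ conjugated by these parametrizations,
\[
    (Sj\psi_2)^{-1}\circ(P^S)^{-1}\circ(Sj\psi_1) = (Sj\psi_2)^{-1}\circ S\circ P\circ S\circ(Sj\psi_1),
\]
and since $S$ and $j$ are involutions and $(Sj\psi)^{-1} = \psi^{-1}j^{-1}S^{-1} = \psi^{-1}jS$, the outer copies of $S$ cancel and the expression collapses to $j\circ\big(\psi_2^{-1}\circ P\circ\psi_1\big)\circ j = j\circ P_{21}\circ j$. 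Now I invoke the elementary fact (a direct consequence of Definition \ref{def:covering-relation} and Definition \ref{def:NT}, and implicitly the content of Definition \ref{def:back-covering-relation}) that conjugating both h-sets by $j$ and the map by $j$ preserves the covering relation: $N_1 \xRightarrow{P_{21}} N_2$ holds iff $N_1^{T} \xRightarrow{jP_{21}j} N_2^{T}$ holds — the homotopy in Definition \ref{def:covering-relation} is simply composed with the coordinate swap, and the normal form $(ap,0)$ becomes $(0,ap)$, which is handled by the corresponding swap on the target. Stringing these equivalences together with the hypothesis \eqref{eq:Poinc-cover} gives $(N_1, Sj\psi_1) \xRightarrow{(P^S)^{-1}} (N_2, Sj\psi_2)$, i.e. $(S\mathcal{N}_1)^T \xRightarrow{(P^S)^{-1}} (S\mathcal{N}_2)^T$, which by Definition \ref{def:back-covering-relation} is exactly \eqref{eq:symmetry-back-covering}.

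\textbf{Main obstacle.} The routine part is the homotopy-level bookkeeping; the genuinely delicate step is the first one — verifying that $S$ really does carry the section $\Sigma_1$ to $S\Sigma_1$ with the correct orientation of the vector field, and that the conjugation identity $(P^S)^{-1} = S\circ P\circ S$ holds exactly (not just up to reparametrization of time), including that the return-time function behaves correctly under time reversal. One must be careful that $P^S$ is defined as a \emph{forward}-time Poincaré map on $S\Sigma_2 \to S\Sigma_1$, so that the minus sign in the time-reversal symmetry is exactly absorbed by reversing the roles of source and target sections; getting the direction of the inequalities in the expansion conditions \eqref{eq:covering_relation_conditions} to land on the correct edges after the $j$-swap is the place where a sign error would hide.
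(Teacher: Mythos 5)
Your overall strategy is the paper's: reduce the back-covering to a forward covering via Definition~\ref{def:back-covering-relation}, and use the time-reversal identity relating $P$ and $P^S$ (your $(P^S)^{-1}=S\circ P\circ S$ is exactly the paper's $S\circ P=(P^S)^{-1}\circ S$) to identify the local map with $P_{21}$. That part, including your care about $S$ mapping the sections and the return time, is sound. The problem is the transpose bookkeeping in the middle. Under the paper's conventions, $S\mathcal{N}_2 \xLeftarrow{P^S} S\mathcal{N}_1$ \emph{means} the plain back-covering $N_2^T \xLeftarrow{P^S_{12}} N_1^T$ with $P^S_{12}=(S\psi_1)^{-1}\circ P^S\circ (S\psi_2)$, and Definition~\ref{def:back-covering-relation} turns this into $N_1 \xRightarrow{(P^S_{12})^{-1}} N_2$, because $c_{(N^T)^T}=c_N$: the transposition built into $S\mathcal{N}=(N^T,S\psi)$ and the one in the back-covering definition cancel. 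Then $(P^S_{12})^{-1}=\psi_2^{-1}\circ S\circ (P^S)^{-1}\circ S\circ\psi_1=\psi_2^{-1}\circ P\circ\psi_1=P_{21}$ exactly, with \emph{no residual} $j$, and the hypothesis finishes the proof. Your identification $(S\mathcal{N}_1)^T=(N_1,\,S j\psi_1)$ inserts the swap a second time (besides being ill-typed: $j$ acts on $\mathbb{R}^2$, $S$ on $\mathbb{R}^4$), and it is precisely this that manufactures the spurious local map $j\circ P_{21}\circ j$ and forces you to invoke the ``elementary fact'' $N_1 \xRightarrow{P_{21}} N_2 \iff N_1^T \xRightarrow{jP_{21}j} N_2^T$.

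That fact is neither needed nor true in the stated generality. A covering relation depends on the h-sets only through the composed map $c_{N_2}^{-1}\circ f\circ c_{N_1}$; transposing both h-sets replaces this by $j\circ c_{N_2}^{-1}\circ f\circ c_{N_1}\circ j$, and conjugating the ambient map by $j$ reproduces the original composed map only when $c_{N_1},c_{N_2}$ commute with $j$. That holds for the paper's concrete sets ($c_N=\mathrm{id}$ on $[-1,1]^2$), but the lemma is stated for arbitrary h-sets, so your hand-wave that the normal form $(ap,0)$ becoming $(0,ap)$ is ``handled by the swap on the target'' does not go through in general (the definition demands expansion in the first coordinate of the target's own chart). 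Moreover, even granting the fact, your chain does not close: it yields a covering between the supports $N_1^T$ and $N_2^T$, while your final displayed claim is a covering between pairs whose supports are $N_1$ and $N_2$. The repair is simply to delete the extra $j$ from the parametrizations of the transposed pairs; then your own computation collapses to the local map $P_{21}$ and the argument becomes the paper's three-line proof. (A small side remark: the correct consequence of $\Gamma\circ S=\Gamma$ is $\nabla\Gamma\circ S=S\circ\nabla\Gamma$, the minus sign appearing for the vector field $J\nabla\Gamma$ via $SJS=-J$, which is what your section computation actually uses.)
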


\begin{proof}
    The condition (\ref{eq:symmetry-back-covering}) written in the expanded form reads:
    \begin{equation}
        N_2^T \xLeftarrow{P_{12}^S} N_1^T,
    \end{equation}
    which by the definition of backcovering relation is equivalent to:
    \begin{equation}
        \label{eq:symmetry-cover-req}
        N_1 \xRightarrow{(P_{12}^S)^{-1}} N_2.
    \end{equation}
    From the symmetry of the system we have%
    \[
        S \circ P=(P^{S})^{-1}\circ S,
    \]
    which from (\ref{eq:local-poinc-map}) implies that
    \[
        (P^{S}_{12})^{-1}=\left(  \psi_{2}\right)  ^{-1}\circ S^{-1}\circ (P^{S})^{-1}\circ S\circ\psi_{1}=\left(  \psi_{2}\right)  ^{-1}\circ P\circ\psi_{1}=P_{21},
    \]
    so (\ref{eq:symmetry-cover-req}) follows from (\ref{eq:Poinc-cover}).
\end{proof}

\begin{definition}
    \label{def:self-S-symmetry}
    We will say that $\mathcal{N} = (N,\psi)$ is self $S$-symmetric, if
    \[
        S \circ \psi \circ c_{N^T}=\psi \circ c_N
    \]
    where $N$ is an h-set with a homeomorphism $c_N$ (see Definition \ref{def:NT} for the notation $c_{N^T}$).
\end{definition}

\begin{lemma}
    \label{lem:self-symmetric-covering}
    Assume that $\mathcal{N}$ is a self $S$-symmetric h-set on a Poincar\'e section $\Sigma_N$ and that $\mathcal{M}$ is an h-set (not necessarily self $S$-symmetric) on a Poincar\'e section $\Sigma_M$. Then (here we use the short-hand notation (\ref{eq:sec-covering-1}))
    \begin{align*} 
        \mathcal{N} \xRightarrow{P} \mathcal{M} & \qquad \mbox{implies} \qquad S\mathcal{N} \xRightarrow{P} \mathcal{M}, \\
        \mathcal{M} \xRightarrow{P} \mathcal{N} & \qquad \mbox{implies} \qquad \mathcal{M} \xRightarrow{P} S\mathcal{N}, \\
        \mathcal{N} \xLeftarrow{P} \mathcal{M} & \qquad \mbox{implies} \qquad S\mathcal{N} \xLeftarrow{P} \mathcal{M}, \\
        \mathcal{M} \xLeftarrow{P} \mathcal{N} & \qquad \mbox{implies} \qquad \mathcal{M} \xLeftarrow{P} S\mathcal{N}.
    \end{align*}
\end{lemma}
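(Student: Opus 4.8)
The plan is to reduce the four implications to the single defining property of self $S$-symmetry, $S \circ \psi_N \circ c_{N^T} = \psi_N \circ c_N$, which says precisely that the h-set $\mathcal{N}$ and its symmetric image $S\mathcal{N} = (N^T, S\psi_N)$ have the same parametrized support with the \emph{same} orientation of exit/entry edges. The key observation is that a covering relation $\mathcal{N}_1 \xRightarrow{P} \mathcal{N}_2$ is, by Definition \ref{def:covering-relation} applied to \eqref{eq:sec-covering-2}, a statement about the map $(\psi_2)^{-1}\circ P \circ \psi_1$ expressed through the homeomorphisms $c_{N_1}, c_{N_2}$ — i.e. about $c_{N_2}^{-1}\circ(\psi_2)^{-1}\circ P\circ\psi_1\circ c_{N_1}$ as a map $N_c\to\mathbb{R}^2$. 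So I would first write out exactly which "chart-level" map the covering relation refers to in each of the four cases, and then check that replacing $\mathcal{N}$ by $S\mathcal{N}$ does not change that chart-level map.

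Concretely, for the first implication $\mathcal{N}\xRightarrow{P}\mathcal{M} \Rightarrow S\mathcal{N}\xRightarrow{P}\mathcal{M}$: the hypothesis is the covering $N\xRightarrow{P_{MN}} M$ where $P_{MN} = (\psi_M)^{-1}\circ P\circ \psi_N$, and the conclusion, unwound via $S\mathcal{N}=(N^T,S\psi_N)$, is the covering $N^T \xRightarrow{\widetilde P} M$ with $\widetilde P = (\psi_M)^{-1}\circ P\circ (S\psi_N)$. Now the chart-level maps are $c_M^{-1}\circ P_{MN}\circ c_N$ and $c_M^{-1}\circ\widetilde P\circ c_{N^T}$ respectively, and since $c_{N^T} = c_N\circ j$ (Definition \ref{def:NT}) while self-symmetry gives $S\psi_N\circ c_{N^T} = \psi_N\circ c_N$, the second chart map equals $c_M^{-1}\circ(\psi_M)^{-1}\circ P\circ (S\psi_N\circ c_{N^T}) = c_M^{-1}\circ(\psi_M)^{-1}\circ P\circ\psi_N\circ c_N$, which is \emph{literally} the first chart map. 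Hence the homotopy and the number $a>1$ witnessing $\mathcal{N}\xRightarrow{P}\mathcal{M}$ witness $S\mathcal{N}\xRightarrow{P}\mathcal{M}$ verbatim. I would then repeat this bookkeeping for the remaining three cases: the second follows the same way with the roles of source and target swapped (self-symmetry of $\mathcal{N}$ now acts on the target, and again $S\psi_N\circ c_{N^T}=\psi_N\circ c_N$ absorbs the change); the third and fourth are the backcovering versions, which by Definition \ref{def:back-covering-relation} translate into forward coverings of transposed h-sets, and one checks that $(S\mathcal{N})^T$-data matches $\mathcal{N}^T$-data using $(N^T)^T = N$ and the same identity — so they reduce to the first two.

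The only genuinely delicate point is keeping the transpositions straight: $S\mathcal{N}$ already carries a transpose in its support-homeomorphism ($N^T$), the backcovering relation introduces \emph{another} transpose, and one must verify that the self-symmetry identity is exactly the compatibility condition that makes these cancel correctly, i.e. that $S$ maps the exit set of $\mathcal{N}$ to the exit set of $S\mathcal{N}$ (rather than swapping exit and entry). This is where Definition \ref{def:self-S-symmetry} is used in its full strength — it is not merely $S|\mathcal{N}| $ being the same support, but the homeomorphisms matching up after the coordinate swap $j$. I expect no analytic difficulty; the whole proof is a careful unwinding of the definitions, and the main obstacle is purely notational hygiene in tracking $c_N$, $c_{N^T}$, $j$, $S$, and $\psi$ through each of the four cases without sign or orientation errors.
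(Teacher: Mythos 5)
Your proposal is correct and takes essentially the same route as the paper: the paper's one-line proof likewise notes that, by Definitions \ref{def:covering-relation}, \ref{def:back-covering-relation} and \ref{def:self-S-symmetry}, the chart-level maps $f_c$ on the left- and right-hand sides coincide, so the homotopy and expansion data transfer verbatim. You merely spell out explicitly the identity $S\circ\psi\circ c_{N^T}=\psi\circ c_N$ and the bookkeeping with $j$ and the transposes that the paper leaves implicit.
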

\begin{proof}
    The result follows directly from the Definitions \ref{def:covering-relation}, \ref{def:back-covering-relation} and \ref{def:self-S-symmetry}. (This is since the maps $f_c$ for the covering relations on the left hand sides are the same as those on the right hand sides.)
\end{proof}

\subsection{Section at collision}\label{sec:section-at-collision} In this section we focus on the Hamiltonian $\Gamma_{2,h}$ from (\ref{eq:Gamma_i}) for the regularised PCR3BP at the second mass.
A special role in our construction will be played by a Poincar\'e section at a collision point chosen as
\begin{equation}
    \label{eq:w0}
	w_{0}:=(0,0,0,\sqrt{8\mu_{2}}). 
\end{equation}
At this point we will consider $\Sigma_0=\{(u,v,p_u,p_v):v=0\}$ and construct $\psi_0:\mathbb{R}^2\to\Sigma_0\cap\{\Gamma_{2,h}=0\}$ so that our choice of $\psi_0$ ensures certain symmetry properties of h-sets placed on such section. The choice of $h$ will be determined later on in our construction.
\begin{remark}
    All the Poincar\'e sections which we will later consider in our construction will be chosen as described in section \ref{sec:Sections-on-energy} except the section $\Sigma_0$ positioned at $w_0$ from (\ref{eq:w0}) with the parametrisation $\psi_0$ as described below.
\end{remark}

Recall that $x_2= -\mu_1$ is the position of the second mass on the $x$-axis in the original coordinates. Let $R:\mathbb{R}\to \mathbb{R}$ be a function defined as
\begin{equation}
    \label{eq:R-def}
	R(u) := 4u^2 \left( x_{2} + u^{2} \right)^2 + 8\mu_{2} + 8h u^{2} + 8 \mu_{1} \frac{ u^{2} }{ |u^{2}-1| }. 
\end{equation}
 We consider two fixed non-zero real values $d_1,d_2\in \mathbb{R}$ (the $d_1$ and $d_2$ are to be specified later) and define
\[
	\psi_0 : \mathbb{R}^2 \to \Sigma_0\cap\{\Gamma_{2,h}=0\}
\]
with $\Sigma_0 = \{v=0\}$ as
\[
    \psi_0 (z_1, z_2) := (u, 0, p_u, p_v(u,p_u)),
\]
where  
 \begin{align}
    u &= d_1  (z_1 + z_2), \notag \\
    p_u &= d_2  (z_1 - z_2), \label{eq:psi_0-def}\\
    p_v(u,p_u) &= 2u \left( x_2 + u^2 \right)^2 + \sqrt{ R(u) - p_u^2 }.\notag
\end{align}
From (\ref{eq:Gamma_i}) and (\ref{eq:R-def}) it follows that, indeed, $\Gamma_{2,h} \circ \psi_0 = 0$.

The above choice of $\psi_0$ will have two features, which will be important in our construction (see Figure \ref{fig:psi_0_coordinates}). These are summarised in the two lemmas which follow.
\begin{lemma}
    \label{lem:collision-in-psi0}
    In the local coordinates $z=(z_1,z_2)\in \mathbb{R}^2$ given by $\psi_0$, the collision curve (\ref{eq:collision-circle}) lies on the straight line $z_1+z_2=0$.
\end{lemma}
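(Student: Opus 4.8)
The plan is to trace through the definition of $\psi_0$ and see precisely which $z$ give rise to a collision point. Recall from~(\ref{eq:collision-circle}) that a collision with the second mass in the regularized coordinates corresponds to $u=v=0$ together with $p_u^2+p_v^2=8\mu_2$. Since the section $\Sigma_0=\{v=0\}$ already forces $v=0$, a point $\psi_0(z_1,z_2)$ lies on the collision curve exactly when its $u$-coordinate vanishes.

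First I would read off from~(\ref{eq:psi_0-def}) that the $u$-coordinate of $\psi_0(z_1,z_2)$ is $u=d_1(z_1+z_2)$. Since $d_1\neq 0$ by assumption, $u=0$ holds if and only if $z_1+z_2=0$. This already pins the collision set inside the line $\{z_1+z_2=0\}$. Second, I would verify the converse consistency, namely that every point on this line with $u=0$ is indeed a genuine collision point: when $u=0$ we have $R(0)=8\mu_2$ from~(\ref{eq:R-def}) (the terms $4u^2(x_2+u^2)^2$, $8hu^2$, and $8\mu_1 u^2/|u^2-1|$ all vanish at $u=0$), and $p_v(0,p_u)=2\cdot0\cdot(x_2+0)^2+\sqrt{R(0)-p_u^2}=\sqrt{8\mu_2-p_u^2}$, so $p_u^2+p_v^2=8\mu_2$, which is exactly~(\ref{eq:collision-circle}). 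Hence $(u,v,p_u,p_v)=(0,0,p_u,\sqrt{8\mu_2-p_u^2})$, a bona fide collision point for every admissible $p_u$ (i.e.\ those with $p_u^2\le 8\mu_2$, which corresponds to the two parameters $z_1=-z_2$ ranging over the appropriate interval through $d_2(z_1-z_2)=2d_2z_1$).

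Putting these together: the portion of the collision curve~(\ref{eq:collision-circle}) that is visible on the section $\Sigma_0$ is precisely $\{\psi_0(z_1,z_2): z_1+z_2=0\}$, i.e.\ in the $z$-coordinates it is the straight line $z_1+z_2=0$, as claimed.

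I do not expect a serious obstacle here; the statement is essentially a bookkeeping consequence of the deliberate design of $\psi_0$, with the one point requiring (minor) care being that $u=0$ is equivalent to $z_1+z_2=0$ rather than merely implied by it, which uses $d_1\neq0$, and the observation that on $\Sigma_0$ the full collision condition $u=v=0$ reduces to $u=0$ since $v=0$ is automatic. The only mild subtlety worth a sentence in the write-up is confirming that such points genuinely satisfy~(\ref{eq:collision-circle}) via $R(0)=8\mu_2$, so that the line really is the collision curve and not some larger set.
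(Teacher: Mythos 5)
Your proposal is correct and follows essentially the same route as the paper: read off $u=d_1(z_1+z_2)$ from (\ref{eq:psi_0-def}) and use $d_1\neq0$, with $v=0$ automatic on $\Sigma_0$. You are in fact slightly more thorough than the paper's one-line argument, which only checks that $z_1+z_2=0$ yields a collision point, whereas you also verify via $R(0)=8\mu_2$ that such points genuinely satisfy (\ref{eq:collision-circle}); this extra check is harmless and uses the same bookkeeping.
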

\begin{proof}
    If $z_1+z_2=0$ then $u = d_1  (z_1 + z_2)=0$, so $\psi_0 (z_1, z_2)$ is equal to zero on the $u,v$ coordinates, hence, it is a collision point in the original coordinates.
\end{proof}
\begin{figure}[h!]
\begin{center}
	\begin{overpic}[width=5cm]{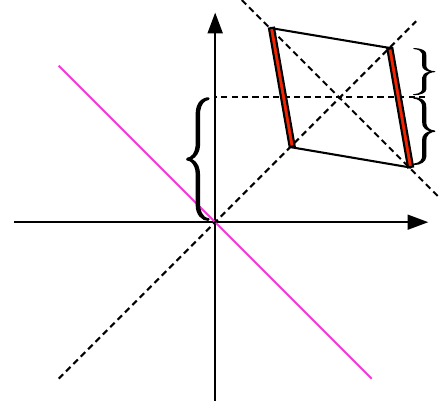}    
        \put (37,58) {$\alpha$}
		\put (100,76) {$\beta(1-L) / (1+L)$}
		\put (100,62) {$\beta$}
		\put (75,88) {$Q$}
		\put (95,37) {$z_1$}
		\put (40,90) {$z_2$}
	\end{overpic}
	\caption{The local coordinates given by $\psi_0$, in which the collision curve is the straight line depicted in pink. Here we also plot a self $S$-symmetric h-set $Q$ from Lemma \ref{lem:Q-self-S-symmetric}.}
    \label{fig:psi_0_coordinates}
\end{center}
\end{figure}
In our construction we will consider h-sets, in the local coordinates given by $\psi_0$, which are additionally rescaled by the linear transformation introduced below. (See the h-set $Q$ in Figure \ref{fig:psi_0_coordinates}.)
Let $L \in [ 0,1 ) $ and consider $\eta_L : \mathbb{R}^{2} \to \mathbb{R}^2$ defined as 
\begin{equation}
    \label{eq:eta-L}
    \eta_L(z)=\frac{1}{1+L}\left( 
    \begin{array}{cc}
        1 & -L \\ 
        -L & 1
        \end{array}
    \right)(z). 
\end{equation}
\begin{remark}
    The reason behind introducing the factor $1/(1+L)$ into the definition of $\eta_L$ is to ensure that $\eta_L (N_c) \subseteq N_c$ and $\eta^{-1}_L = \eta_{-L}$.
\end{remark}
It will turn out to be important for us that h-sets rescaled using $\eta_L$ will be self $S$-symmetric, as described in the following lemma.
\begin{lemma}
    \label{lem:Q-self-S-symmetric}
    Let $\alpha,\beta$ be real numbers. Consider an h-set $Q$ defined by a homeomorphism $c_Q:\mathbb{R}^2\to\mathbb{R}^2$ chosen as (see Figure \ref{fig:psi_0_coordinates})
\[ 
	c_Q(z):=(\alpha, \alpha) +\beta \eta_L(z).
\]
Then the h-set $\mathcal{Q} = (Q,\psi_0)$ is self $S$-symmetric (in the sense of Definition \ref{def:self-S-symmetry}).
\end{lemma}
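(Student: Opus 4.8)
The plan is to verify Definition \ref{def:self-S-symmetry} directly, i.e. to check that $S\circ\psi_0\circ c_{Q^T} = \psi_0\circ c_Q$, by unwinding all the definitions and using the explicit formulas for $S$, $\psi_0$, $\eta_L$ and the transposition map $j$. First I would record the two ingredients on the right- and left-hand sides: by Definition \ref{def:NT}, $c_{Q^T} = c_Q\circ j$, so $\psi_0\circ c_{Q^T} = \psi_0\circ c_Q\circ j$, and the claim becomes
\[
    S\circ\psi_0\circ c_Q\circ j = \psi_0\circ c_Q.
\]
The key structural observation is that $j$ commutes with $\eta_L$ up to the obvious symmetry of the matrix in (\ref{eq:eta-L}): since $\eta_L$ has equal diagonal entries and equal off-diagonal entries, $j\circ\eta_L = \eta_L\circ j$. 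Likewise the point $(\alpha,\alpha)$ is fixed by $j$. Hence $c_Q\circ j = j\circ c_Q$ after the shift, more precisely $c_Q(j(z)) = (\alpha,\alpha)+\beta\eta_L(j(z)) = (\alpha,\alpha)+\beta\, j(\eta_L(z)) = j\big(c_Q(z)\big)$, using that $j$ is linear and $j(\alpha,\alpha)=(\alpha,\alpha)$. So it suffices to prove the pointwise identity
\[
    S\big(\psi_0(j(\zeta))\big) = \psi_0(\zeta) \qquad \text{for all } \zeta\in\mathbb{R}^2,
\]
i.e. that swapping $z_1\leftrightarrow z_2$ inside $\psi_0$ has the same effect as applying $S$ to the output.

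Next I would check this last identity by direct computation using (\ref{eq:psi_0-def}). Writing $\zeta = (z_1,z_2)$ and $j(\zeta)=(z_2,z_1)$: the $u$-coordinate of $\psi_0(j(\zeta))$ is $d_1(z_2+z_1)$, unchanged from that of $\psi_0(\zeta)$; the $v$-coordinate is $0$ in both cases; the $p_u$-coordinate becomes $d_2(z_2-z_1) = -d_2(z_1-z_2)$, i.e. it flips sign; and since $p_v(u,p_u)$ in (\ref{eq:psi_0-def}) depends on $p_u$ only through $p_u^2$ and on $u$ which is unchanged, the $p_v$-coordinate is unchanged. Thus $\psi_0(j(\zeta)) = (u,0,-p_u,p_v)$ where $(u,0,p_u,p_v) = \psi_0(\zeta)$. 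Applying $S$ from (\ref{eq:S_symmetry}), which sends $(q_1,q_2,q_3,q_4)\mapsto(q_1,-q_2,-q_3,q_4)$, to $\psi_0(\zeta)=(u,0,p_u,p_v)$ gives $(u,-0,-p_u,p_v) = (u,0,-p_u,p_v) = \psi_0(j(\zeta))$, which is exactly the desired identity. Combining this with the commutation $c_Q\circ j = j\circ c_Q$ established above yields
\[
    S\circ\psi_0\circ c_{Q^T} = S\circ\psi_0\circ j\circ c_Q = \psi_0\circ c_Q,
\]
which is the statement of self $S$-symmetry.

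The computation is entirely routine; there is no real obstacle. The only point requiring a moment's care is the bookkeeping of which map ($j$, $\eta_L$, $S$) acts where — in particular making sure that the ``swap'' produced by $j$ on the $\psi_0$-coordinates matches the sign pattern of $S$ rather than, say, that of a different involution — and the observation that $p_v$ in (\ref{eq:psi_0-def}) is even in $p_u$, which is what makes the $v=0$ section invariant under $S$. I would present the argument in the two stages above: first reduce to the pointwise identity $S\circ\psi_0\circ j = \psi_0$ using the symmetry of $\eta_L$ and the fixed point $(\alpha,\alpha)$ of $j$, then verify that identity coordinate-by-coordinate from the formulas for $\psi_0$ and $S$.
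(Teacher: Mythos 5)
Your proposal is correct and follows essentially the same route as the paper's proof: reduce via $c_{Q^T}=c_Q\circ j=j\circ c_Q$ (using that $j$ commutes with $\eta_L$ and fixes $(\alpha,\alpha)$) to the identity $S\circ\psi_0\circ j=\psi_0$, which is then verified coordinate-wise from the evenness of $p_v(u,p_u)$ in $p_u$. The only cosmetic difference is that you verify the equivalent form $S\circ\psi_0=\psi_0\circ j$, which is the same statement since $j$ is an involution.
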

\begin{proof}
    Recall from Definition \ref{def:NT} that $c_{Q^{T}} = c_Q \circ j.$ We need to show that
    \begin{equation}
        \label{eq:symmetry-target}
        S\circ\psi_{0}\circ c_{Q^{T}}=\psi_{0}\circ c_{Q}.
    \end{equation}
    We notice that $\eta_L \circ j = j \circ \eta_L$ implies that $c_{Q^T} = j \circ c_Q$, which reduces the condition (\ref{eq:symmetry-target}) into:
    \[
        S \circ \psi_0 \circ j = \psi_0.
    \]
    We notice that for $\psi_0 = \left( u, 0, p_u, p_v(u,p_u) \right)$ it is true that:
    \[
        \psi_0 \circ j = \left( u, 0, -p_u, p_v(u,-p_u) \right).
    \]
    The fact that $p_{v}\left(  u,p_{u}\right)  =p_{v}\left(  u,-p_{u}\right)$ concludes the proof.
\end{proof}

\section{Symbolic dynamics in the regularized system}
\label{sec:dynamics_in_regularized_system}
\subsection{Overview}

The first result of this section is Theorem \ref{thm:lyapunov_orbit_energy} where we compute the energy $h_{0}$ of an ejection/collision orbit, which is born from the Lyapunov family, as it ``crashes" with the Earth. If we view the family of Lyapunov orbits in the regularized coordinates, then for all the energies close to and including $h_{0}$ we observe periodic orbits. In the regularized coordinates there is nothing special about the orbit with the energy $h_{0}$ except that it passes through a point, which in the original coordinates corresponds to a collision.

In section \ref{subsec:Covering-relations-along-orbits} we will position a sequence of covering relations, in the regularized coordinates, around the periodic orbit corresponding to the energy $h_{0}$. We will refer to this sequence as a ``collision" sequence, since it involves orbits that come close to a collision point. We will also consider another family of covering relations. This second family is associated with the fact that the stable/unstable manifolds of Lyapunov orbits intersect transversally. In the regularized coordinates such transversal intersection is also present at the energy $h_{0}$ and results in the existence of a homoclinic orbit. Our second sequence of covering relations is associated with this homoclinic excursion. We will refer to this sequence as the ``outer" excursion. We will also show that the two sequences, the ``collision" and the ``outer" can be combined together in arbitrary order, leading to rich symbolic dynamics.

\subsection{Energy of the ejection/collision orbit}
\label{subsec:Energy-of-Lyapunov}

In this subsection we prove the existence of Lyapunov orbit that passes through the regularized collision and we determine the real interval that contains the energy of this orbit.
\begin{theorem}
    \label{thm:lyapunov_orbit_energy}
    There exists positive real value $\tau$ and real value $h_{0}$ such that
    \[
        \Gamma_{2,h_{0}}(w_{0})=0,\quad\Phi_{2\tau}^{2,h_{0}}(w_{0})=w_{0}.
    \]
    where as in (\ref{eq:w0}) $w_{0}=(0,0,0,\sqrt{8\mu_{2}})$ and\footnote{The exact bounds for the value of $h_0$ that we obtained and that we use in the later computations are $h_{0}\in-\big[6404647308743062,6404647308742343\big]\cdot2^{-53}$.}
    \begin{equation}
        \label{eq:h0_energy_interval}
        h_0 \in [ -0.71106, -0.71105 ].
    \end{equation}
\end{theorem}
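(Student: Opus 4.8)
The plan is to reduce the statement of Theorem~\ref{thm:lyapunov_orbit_energy} to a single root-finding problem which can then be resolved rigorously with the parametrized interval Newton method (Theorem~\ref{thm:extended_interval_Newton_theorem}). The key observation is that the existence of an ejection/collision orbit born from the Lyapunov family, considered in the regularized Levi-Civita coordinates at the second primary, is equivalent to the existence of a \emph{symmetric} periodic orbit passing through the collision point $w_0=(0,0,0,\sqrt{8\mu_2})$ --- note that $\Gamma_{2,h}(w_0)=0$ holds automatically for \emph{every} $h$ by the collision-circle relation \eqref{eq:collision-circle}, so $h$ is the only free parameter. I would exploit the time-reversal symmetry $S$: since $S(w_0)=w_0$, the orbit starting at $w_0$ will be periodic with period $2\tau$ precisely when, after propagating for regularized time $\tau$, the trajectory $\Phi_\tau^{2,h}(w_0)$ lands on the fixed-point set $\mathrm{Fix}(S)=\{(u,v,p_u,p_v): v=0,\ p_u=0\}$. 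Indeed, if $\Phi_\tau^{2,h}(w_0)\in\mathrm{Fix}(S)$, then applying \eqref{eq:S_symmetry-flow} for the regularized flow gives $\Phi_{2\tau}^{2,h}(w_0)=\Phi_\tau^{2,h}\big(\Phi_\tau^{2,h}(w_0)\big)=\Phi_\tau^{2,h}\big(S\Phi_\tau^{2,h}(w_0)\big)=S\Phi_{-\tau}^{2,h}\big(\Phi_\tau^{2,h}(w_0)\big)=S(w_0)=w_0$.

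The concrete setup is then as follows. I would choose a Poincar\'e section transverse to the flow near the expected return, for instance $\{v=0\}$ (the section $\Sigma_0$ from Section~\ref{sec:section-at-collision}), and let $\tau(h)$ denote the first return time of $\Phi^{2,h}(w_0)$ to this section. One then needs to verify that the second coordinate of the returned point, namely $p_u\big(\Phi_{\tau(h)}^{2,h}(w_0)\big)$, vanishes for some $h=h_0$. So I would define
\[
  F(h) := \pi_{p_u}\Big(\Phi_{\tau(h)}^{2,h}(w_0)\Big),
\]
a scalar $C^1$ function of $h$ (smoothness of the first return follows from transversality of the flow to $\{v=0\}$ at the return point, which is part of the rigorous check), and apply the interval Newton method on a small interval around the numerically observed value $h\approx -0.71106$. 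Since both the flow $\Phi_s^{2,h}$ and its derivatives with respect to $s$, $w$ and $h$ can be enclosed rigorously using a validated ODE integrator (a $C^1$ Taylor-model or Lohner-type integrator), and the return time $\tau(h)$ can itself be enclosed via an interval Newton step applied to the section condition $v\big(\Phi_s^{2,h}(w_0)\big)=0$, the quantities $F(h)$ and $F'(h)$ on the interval $[-0.71106,-0.71105]$ are computable. The interval Newton operator $N(h,I)=h-[F'(I)]^{-1}[F(h)]$ then yields, once we verify $N(h,I)\subset I$, a unique $h_0\in I$ with $F(h_0)=0$; reading off the tighter enclosure produces the bound stated in the footnote. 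The companion value $\tau=\tau(h_0)$ comes out of the same computation as the enclosure of the return time.

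I would expect the main obstacle to be controlling the validated integration of the regularized flow near the collision. Although the Levi-Civita coordinates remove the singularity at $u=v=0$, the vector field still contains the term $\mu_{3-i}\big/\big((u^2-v^2+\epsilon_i)^2+4u^2v^2\big)^{3/2}$ coming from the \emph{non}-regularized primary (here the Earth), and along an ejection/collision orbit of this energy the trajectory makes a wide loop that passes close to the Moon --- i.e. close to $|u|=1$ (equivalently $u^2-v^2+\epsilon_2=u^2-v^2-1\approx 0$ with $v$ small), where this denominator becomes small and the field stiff. So the integrator must take very fine, carefully managed steps through that near-Moon passage while keeping the wrapping effect under control, and the whole orbit (ejection, outer loop near the Moon, return) must be integrated with enough precision that the interval $F(I)$ is still sharp enough for the Newton inclusion to close. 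A secondary technical point is to make sure the numerically located orbit is genuinely the member of the Lyapunov family (so that the later construction in Sections~\ref{sec:dynamics_in_regularized_system}--\ref{sec:collision-approach} attaches to the right object); this is really a matter of starting the continuation from a known Lyapunov orbit and is not part of the rigorous statement proved here, but it guides the choice of the initial interval for $h$. Once the validated integrator is set up, the rest --- reducing periodicity to the symmetric return, and invoking Theorem~\ref{thm:extended_interval_Newton_theorem} (with a trivial one-dimensional parameter, or simply the non-parametrized interval Newton theorem) --- is routine.
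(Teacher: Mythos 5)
Your proposal is essentially the paper's proof: the authors likewise use that $S(w_0)=w_0$, reduce periodicity to hitting $\mathrm{Fix}(S)=\{v=0,\ p_u=0\}$ at the half-period (same symmetry computation), take the section $\Sigma_v=\{v=0\}$ with the return condition $\pi_{p_u}=0$, and close the argument with the interval Newton theorem via validated CAPD integration. The only difference is the shooting formulation: you solve the scalar single-shooting equation $F(h)=\pi_{p_u}\big(P^h(w_0)\big)=0$ in $h$ alone, whereas the paper sets up a five-dimensional parallel-shooting map $\Psi(h,w)=\big(\Phi_{s_1}^{2,h}(w_0)-w,\ \pi_{p_u}P^h(w)\big)$ with an intermediate point at a tuned time $s_1$, precisely because (as they remark) the single-shooting version ``would still work, but be less accurate than we would need'': it yields a wider enclosure of $h_0$ than the $\sim 10^{-13}$ bound they rely on in the later covering-relation computations, though it would suffice for the interval $[-0.71106,-0.71105]$ stated in the theorem itself. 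Your identification of the near-Moon passage (the non-regularized term with denominator near zero at $|u|\approx 1$, $v\approx 0$) as the main integration difficulty is consistent with the geometry, and is part of what motivates the parallel-shooting refinement.
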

\begin{proof}
    Let $\Sigma_{v}=\{(u,v,p_{u},p_{v}):v=0\}$. Let us write $P^{h}:\mathbb{R}^{4}\to\Sigma_{v}$ for the Poincar\'e map induced by the flow $\Phi_{s}^{2,h}$. Let us recall that $w_{0}=(0,0,0,\sqrt{8\mu_{2}})$ is a point on the collision circle in the regularized coordinates. 

    Let $\Psi:\mathbb{R}^{5}\to\mathbb{R}^{5}$ be a function defined as,
    \begin{equation}
        \Psi(h,w)=\begin{pmatrix} \Phi_{s_{1}}^{2,h}(w_{0})-w\\ \pi_{3}P^{h}(w) \end{pmatrix},
    \end{equation}
    where $s_{1}\approx0.895629883$. (The particular choice of $s_{1}$ is quite arbitrary and the integration by time $s_{1}$ here is only due to technical reasons: We perform a ``parallel shooting" type method since the integration time from $w_{0}$ to $\Sigma_{v}$ is rather long. Had we considered looking for zeros of $h\mapsto\pi_{3}P^{h}(w_{0})$ the method would still work, but be less accurate than we would need; see also Remark \ref{rem:s1_value}). Observe that if we show that for some $h_{0}\in\mathbb{R}$ and $w_{1}\in\mathbb{R}^{4}$ we have $\Psi(h_{0},w_{1})=0$, then we will obtain that $w_{2}:=P^{h_0}(w_{1})=P^{h_{0}}(\Phi_{s_{1}}^{2,h_{0}}(w_{0}))$ is a self $S$-symmetric point, where $S$ is defined in (\ref{eq:S_symmetry}). This is because $w_{2}$ is zero on the coordinate $v$ due to the choice of the section $\Sigma_{v}$ and also zero on the coordinate $p_{u}$ since $\pi_{3}w_{2}=P^{h_0}(w_{1})=0$. We know that $w_{2}=P^{h_{0}}(\Phi_{s_{1}}^{2,h_{0}}(w_{0}))$, so letting $\tau$ be the time for which $w_{2}=\Phi_{\tau}^{2,h_{0}}(w_{0})$ we obtain
    \begin{align*}
        w_{0} & =\Phi_{-\tau}^{2,h_{0}}(w_{2})=\Phi_{-\tau}^{2,h_{0}}(S(w_{2}))=S(\Phi_{\tau}^{2,h_{0}}(w_{2}))=S(w_{0})=w_{0},
    \end{align*}
    so we have a periodic orbit passing through the collision point $w_{0}$ in the regularized coordinates, which means that we have an ejection/collision orbit in the original coordinates. 

    We have performed an interval arithmetic validation by means of the interval Newton Theorem \ref{thm:extended_interval_Newton_theorem}, which proved that function $\Psi$ has a unique root for $h_{0}$ that satisfies (\ref{eq:h0_energy_interval}). For our computer assisted proof we have used the CAPD\footnote{Computer Assisted Proofs in Dynamics: http://capd.ii.uj.edu.pl} package \cite{MR4283203,MR4395996}. The computations took less than a minute. This concludes our proof.
\end{proof}

\begin{remark}
    \label{rem:s1_value}
    The exact value of $s_1$ is $58696/65536$. Such value is chosen to minimize the diameter of the estimation of value $h_0$. With the value present in the proof, the size of the exact bounds of $h_0$ is approximately $7.98 \cdot {10}^{-14}$. For comparison, with $s_1$ equal to $1/2$ the size is $2.75 \cdot {10}^{-13}$.
\end{remark}

\subsection{Covering relations for excursions around the ejection/collision orbit and its homoclinic}
\label{subsec:Covering-relations-along-orbits}

Throughout this section we work with the regularized flow $\Phi_{s}^{2,h_{0}}$ generated by the Hamiltonian $\Gamma_{2,h_{0}}$, which regularizes the collision with the second mass, which in the Earth-Moon system is the Earth. Here we choose $h_{0}$ to be the value from Theorem \ref{lem:inverse-psi}. 

We will consider a sequence of points $w_k\in \mathbb{R}^4$ for  $k \in \{ 0, ..., K \}$ with $K = 18$. The choice of the points $w_0,\ldots, w_K$ is written out in Table \ref{table:wk} in \ref{appendix:coordinate_systems}. The points are also depicted in Figures \ref{fig:homoclinic_reg_orbit} and \ref{fig:homoclinic_std_orbit}. At these points we will position consecutive Poincar\'e sections $\Sigma_k$ equipped with local coordinate changes 
\[
    \psi_{k}:\mathbb{R}^{2}\to\Sigma_{k}\cap\{ \Gamma_{2,h_{0}} = 0\}, \qquad \mbox{for }k=0,\ldots,K.
\]

The first of the points $w_0$ will be chosen as (\ref{eq:w0}), and at this point we will choose the section $\Sigma_0=\{v=0\}$ and the parametrisation $\psi_0$, which was introduced in section \ref{sec:section-at-collision}. The choice of the constants $d_1$ and $d_2$ defining $\psi_0$ is discussed in \ref{appendix:coordinate_systems} and written out in (\ref{eq:d1-d2-choice}). At the remaining points $w_1,\ldots,w_K$ we will consider Poincer\'e sections described in section \ref{sec:Sections-on-energy}. The choice of the vectors $\hat u_k$ and $\hat s_k$ specifying the coordinate changes $\psi_k$, for $k=1,\ldots,K$ is written out in \ref{appendix:coordinate_systems} (see in particular Tables \ref{table:uk} and \ref{table:sk}). Below we describe the method we used to reach such choice.

\begin{figure}[h!]
    \begin{center}
    \begin{overpic}[width=6cm]{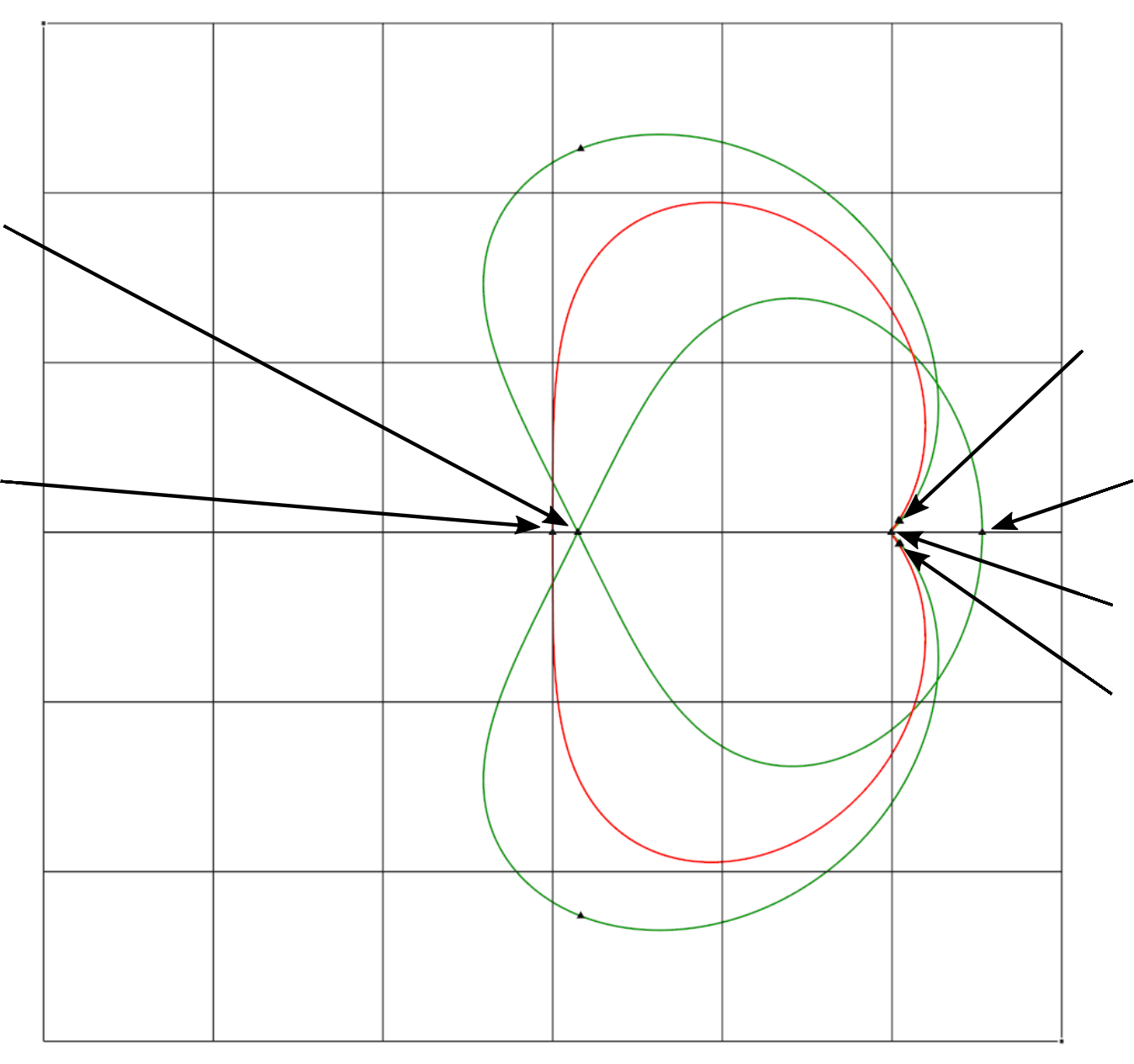}    
    \put (-10,53) {$w_0, w_4, w_8, w_{12}$}
    \put (95,65) {$w_1, w_5, w_9, w_{13}$}
    \put (95,38) {$w_2, w_6, w_{10}, w_{14}$}
    \put (95,30) {$w_3, w_7, w_{11}, w_{15}$}
    \put (50,10) {$w_{16}$}
    \put (-10, 72) {$w_{17}$}
    \put (100,52) {$w_{18}$}
    \put (45,83) {$S(w_{16})$}
    \put (13,-5) {$-1$}
    \put (47,-5) {$u$}
    \put (77,-5) {$1$}
    \put (-2,77) {$1$}
    \put (-3,45) {$v$}
    \put (-7,15) {$-1$}
    \end{overpic}
    \end{center}
    \caption{The collision/ejection orbit (red) and its homoclinic (green) in the regularized coordinates (projection on coordinates $u, v$). }
    \label{fig:homoclinic_reg_orbit}
\end{figure}

The points $w_0$, $w_1$ and $w_2$ are the same as the respective points in the proof of Theorem \ref{thm:lyapunov_orbit_energy}. We recall that:
\begin{equation}
    \label{eq:w0_w2_S_symmetry}
    w_0 = S(w_0), \quad w_2 = S(w_2).
\end{equation}
We also define the point $w_3$ as
\begin{equation}
    \label{eq:w1_w3_S_symmetry}
    w_3 := S(w_1).
\end{equation}
We see that points $w_0, ..., w_3$ lie on the ejection/collision orbit.
We place the remaining points $w_4, ..., w_K$  approximately on its homoclinic. We choose $w_K$ so that
\begin{equation}
    \label{eq:wK_S_symmetry}
    w_K = S(w_K),
\end{equation}
At the points $w_0, ..., w_K$ we position Poincar\'e sections defined as (\ref{eq:section-choice}), and define 
\[P_k:\mathbb{R}^4\to \Sigma_k, \quad \mbox{for }k=0,\ldots,K,\] 
to be maps to the corresponding sections along the flow $\Phi_s^{2,h_0}$. 
Since $w_0, ..., w_3$ lie on the ejection/collision orbit we have
\begin{equation}\label{eq:points-on-orbit}
w_1=P_1(w_0),\qquad w_2=P_2(w_1),\qquad w_3=P_3(w_2),\qquad w_0=P_0(w_3),
\end{equation}
and $w_0$ is a fixed point of  $P_0 \circ P_3 \circ P_2 \circ P_1$. 

To introduce parametrizations $\psi_k$, for $k=1,\ldots,K$ (\ref{eq:local-parametrization-choice}) on $\Sigma_k \cap \{\Gamma_{2,h_0}=0\}$ we choose appropriate $\hat u_k$ and $\hat s_k$; see Remark \ref{rem:local-map-choice}. 
The vector $\hat{u}_0$ is a unit eigenvector associated to the largest eigenvalue of the matrix
\[
    D \Phi_{2\tau}^{2,h_{0}} (w_0),
\]
where the eigenvalue problem is solved numerically. This vector is also propagated numerically around the periodic orbit to produce $\hat{u}_k$ at $w_k$ for $k=1,2,3$. We also choose $\hat{u}_4 := \hat{u}_0$ and propagate it around the homoclinic orbit to produce $\hat{u}_k$ at $w_k$ for $k=5,\ldots,K$.
The vectors $\hat{s}_0, ..., \hat{s}_K$ are obtained in a similar way and from using the $S$-symmetry of the system. In particular, in our argument we will explicitly use the fact that we choose
\begin{equation}
    \label{eq:S_backsymmetric_stable_directions}
    \quad \hat{s}_2 = S \hat{u}_2, \quad \hat{s}_K = S \hat{u}_K,
\end{equation}
\begin{equation}
    \label{eq:s1_u3_S_backsymmetry}
    \hat{s}_1 = S \hat{u}_3 \quad \hat{s}_3 = S \hat{u}_1.
\end{equation}
(See Figure \ref{fig:homoclinic_std_orbit} for how the points $w_k$ corresponding to (\ref{eq:S_backsymmetric_stable_directions}) and (\ref{eq:s1_u3_S_backsymmetry}) are positioned.)

\begin{figure}[h]
    \begin{center}
    \begin{overpic}[width=6cm]{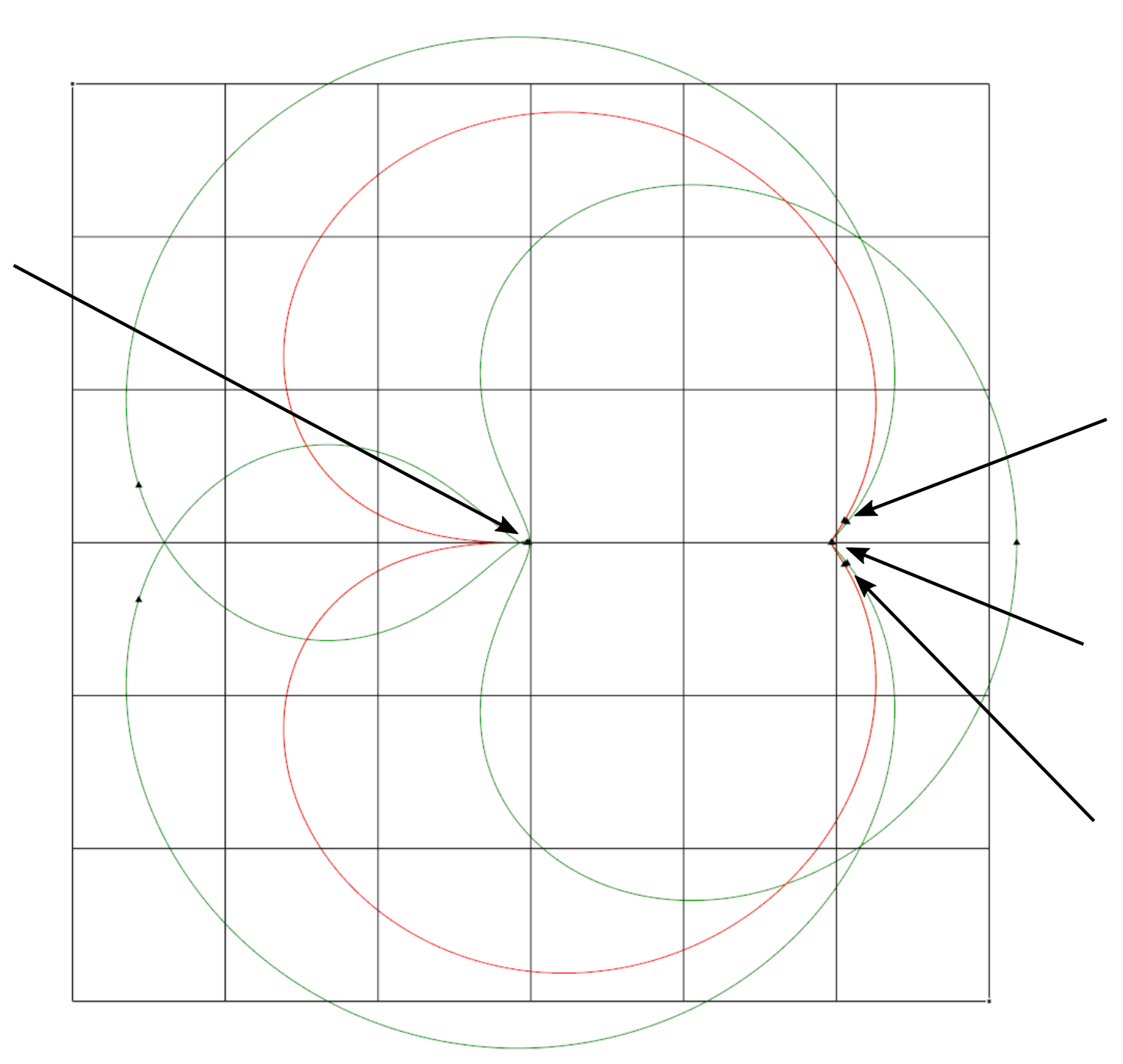}    
        \put (-15,73) {$w_0, w_4, w_8, w_{12}, w_{17}$}
        \put (95,59) {$w_1, w_5, w_9, w_{13}$}
        \put (95,35) {$w_2, w_6, w_{10}, w_{14}$}
        \put (95,19) {$w_3, w_7, w_{11}, w_{15}$}
        \put (7,44) {$w_{16}$}
        \put (92,46) {$w_{18}$}
        \put (3,54) {$S(w_{16})$}
        \put (14,-3) {$-1$}
   	    \put (47,-3) {$x$}
    	\put (73,-3) {$1$}
    	\put (-3,45) {$y$}
   	    \put (-5,17) {$-1$}
    \end{overpic}
    \end{center}
    \caption{The collision/ejection orbit (red) and the homoclinic orbit (green) in standard coordinates. }
    \label{fig:homoclinic_std_orbit}
\end{figure}
The vectors $\hat u_1,\ldots,\hat u_K$ and $\hat s_1,\ldots,\hat s_K$ determine the local coordinate changes $\psi_1,\ldots,\psi_K$. The vectors also $\hat u_0$ and $\hat s_0$  determine the choice of the constants $d_1$ and $d_2$ which define $\psi_0$ (\ref{eq:psi_0-def}). This choice is discussed in (\ref{eq:why-d1-d2}) and (\ref{eq:d1-d2-choice}) in \ref{appendix:coordinate_systems}.

For $k=0,\ldots,K$ we define a sequence of h-sets $\mathcal{N}_k$ on the sections $\Sigma_k \cap \{\Gamma_{2,h_0}=0\}$, which are parameterized by $\psi_k$, namely:
\[
    \mathcal{N}_k = (N_c, \psi_k),    
\]
where $N_c = [-1,1]^2$. Our choices of the vectors $\hat{u}_k, \hat{s}_k$ defining the parametrizations are appropriately rescaled so that we can choose all of these h-sets to be of the same size in the local coordinates at $w_k$.

\begin{remark}
    By Lemma \ref{lem:Q-self-S-symmetric} (taking $\alpha=L=0$ and $\beta=1$), the h-set $\mathcal{N}_0$ is self $S$-symmetric. Also, due to (\ref{eq:w0_w2_S_symmetry}), (\ref{eq:wK_S_symmetry}) and (\ref{eq:S_backsymmetric_stable_directions}) the h-sets $\mathcal{N}_2$ and $\mathcal{N}_K$ are self $S$-symmetric. Furthermore, due to (\ref{eq:w1_w3_S_symmetry}) and (\ref{eq:s1_u3_S_backsymmetry}) we have  $\mathcal{N}_3=S\mathcal{N}_1$. 
\end{remark}

For the result presented below consider a sequence of Poincar\'e maps 
\begin{equation} \label{eq:PS-maps}
P^S_k:S\Sigma_k \to S\Sigma_{k-1}  \qquad \mbox{for } k=1,\ldots,K.
\end{equation}

\begin{theorem}
    \label{thm:covering_relations_1_3}
    We have the following coverings (here we use the short-hand notation (\ref{eq:sec-covering-1}--\ref{eq:sec-covering-2})):
    \begin{align}
        \label{eq:covering_sequence_c1}
        &\mathcal{N}_0 \xRightarrow{P_1}
        \mathcal{N}_1 \xRightarrow{P_2}
        \mathcal{N}_2, \\
        \label{eq:covering_sequence_c2}
        &\mathcal{N}_2 \xLeftarrow{P_3}
        \mathcal{N}_3 \xLeftarrow{P_0}
        \mathcal{N}_0,
    \end{align}
    \begin{align}
        \label{eq:covering_sequence_o1}
        &\mathcal{N}_0 \xRightarrow{P_1}
        \mathcal{N}_1 \xRightarrow{P_2} 
        \mathcal{N}_2 \xLeftarrow{P_3} 
        \mathcal{N}_3 \xRightarrow{P_4} ... \xRightarrow{P_K}
        \mathcal{N}_K, \\
        \label{eq:covering_sequence_o2}
        &\mathcal{N}_K \xLeftarrow{P^S_K}
        S \mathcal{N}_{K-1} \xLeftarrow{P^S_{K-1}} ... \xLeftarrow{P^S_4}
        S \mathcal{N}_3 \xRightarrow{P^S_3}
        S \mathcal{N}_2 \xLeftarrow{P^S_2}
        S \mathcal{N}_1 \xLeftarrow{P^S_1}
        \mathcal{N}_0.
     \end{align}
\end{theorem}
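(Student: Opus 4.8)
The plan is to establish each of the four covering chains in Theorem \ref{thm:covering_relations_1_3} as a computer-assisted verification of the conditions \eqref{eq:covering_relation_conditions}, namely the contraction condition and the two expansion conditions, for each individual covering relation appearing in the chains, and then to exploit the $S$-symmetry machinery of Section \ref{sec:prel} to obtain the ``dual'' chains \eqref{eq:covering_sequence_c2} and \eqref{eq:covering_sequence_o2} cheaply from \eqref{eq:covering_sequence_c1} and \eqref{eq:covering_sequence_o1}. Concretely, for a single link $\mathcal{N}_{k-1}\xRightarrow{P_k}\mathcal{N}_k$, I would use the CAPD rigorous integrator to enclose the flow $\Phi_s^{2,h_0}$ from a neighborhood of $\Sigma_{k-1}$ to $\Sigma_k$, compose with the parametrizations $\psi_{k-1}$ and $\psi_k$ (evaluated rigorously via the interval Newton method as in Remark \ref{rem:E-from-Interval-Newton}) to obtain an interval enclosure of the local Poincar\'e map $P_{k,k-1} = \psi_k^{-1}\circ P_k\circ\psi_{k-1}$, and then check that the image of the left edge $N_c^l$ has first coordinate $<-1$, the image of the right edge $N_c^r$ has first coordinate $>+1$, and the image of the whole square $N_c$ has second coordinate in $B=[-1,1]$. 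Because the local coordinates $\psi_k$ were built so that $P_{k,k-1}$ is ``well aligned'' with the dynamics (first coordinate expanding, second contracting, derivative nearly diagonal), these inequalities should hold with room to spare; the interior points $w_k$ act as the approximate fixed points around which everything is centered.

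Having verified the forward chain \eqref{eq:covering_sequence_c1}, I would obtain \eqref{eq:covering_sequence_c2} by applying Lemma \ref{lem:S_backsymmetry} link-by-link: $\mathcal{N}_0\xRightarrow{P_1}\mathcal{N}_1$ yields $S\mathcal{N}_1\xLeftarrow{P_1^S}S\mathcal{N}_0$, and $\mathcal{N}_1\xRightarrow{P_2}\mathcal{N}_2$ yields $S\mathcal{N}_2\xLeftarrow{P_2^S}S\mathcal{N}_1$. Then I would use the identifications recorded in the remark following the definition of $\mathcal{N}_k$: $\mathcal{N}_0$ and $\mathcal{N}_2$ are self $S$-symmetric, so $S\mathcal{N}_0=\mathcal{N}_0$ and $S\mathcal{N}_2=\mathcal{N}_2$ (modulo the transpose of the homeomorphism, which is exactly what the back-covering relation is set up to absorb), and $\mathcal{N}_3=S\mathcal{N}_1$. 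Tracking these through, together with the relabeling $P_k^S$ against the appropriate physical Poincar\'e maps via the symmetry relation $S\circ P = (P^S)^{-1}\circ S$ used in the proof of Lemma \ref{lem:S_backsymmetry}, turns the two forward covers into the two back-covers of \eqref{eq:covering_sequence_c2}. The same symmetry argument, now applied along the full homoclinic chain of length $K$, converts \eqref{eq:covering_sequence_o1} into \eqref{eq:covering_sequence_o2}; note that the middle portion $S\mathcal{N}_3\xRightarrow{P_3^S}S\mathcal{N}_2\xLeftarrow{P_2^S}S\mathcal{N}_1$ in \eqref{eq:covering_sequence_o2} is exactly the $S$-image of $\mathcal{N}_1\xRightarrow{P_2}\mathcal{N}_2\xLeftarrow{P_3}\mathcal{N}_3$ from \eqref{eq:covering_sequence_o1}, and the self-symmetry of $\mathcal{N}_K$ plus $\mathcal{N}_3=S\mathcal{N}_1$ close up the endpoints. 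Where a link in \eqref{eq:covering_sequence_o1} is a back-cover (the $\mathcal{N}_2\xLeftarrow{P_3}\mathcal{N}_3$ step) I would apply Lemma \ref{lem:S_backsymmetry} in the transposed form, or equivalently unwind the definition of back-covering via $N^T$ and then apply the lemma; the self-symmetry lemma \ref{lem:self-symmetric-covering} is also available to swap $\mathcal{N}_k$ and $S\mathcal{N}_k$ at the self-symmetric nodes without changing the underlying $f_c$.

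The genuinely new verifications are therefore: the two links of \eqref{eq:covering_sequence_c1}, and the additional links $\mathcal{N}_2\xLeftarrow{P_3}\mathcal{N}_3$, $\mathcal{N}_3\xRightarrow{P_4}\mathcal{N}_4$, $\mathcal{N}_4\xRightarrow{P_5}\mathcal{N}_5,\ldots,\mathcal{N}_{K-1}\xRightarrow{P_K}\mathcal{N}_K$ of \eqref{eq:covering_sequence_o1} — a fixed finite list of Poincar\'e-map covering checks, each reduced by \eqref{eq:covering_relation_conditions} to three interval inequalities. For the back-cover $\mathcal{N}_2\xLeftarrow{P_3}\mathcal{N}_3$ I would instead verify, per Definition \ref{def:back-covering-relation}, the forward cover $\mathcal{N}_3^T\xRightarrow{P_3^{-1}}\mathcal{N}_2^T$, i.e.\ check the same three inequalities for the inverse Poincar\'e map in the transposed coordinates (equivalently, integrate the flow backward in $s$ from $\Sigma_3$ to $\Sigma_2$). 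All of these are carried out rigorously with CAPD. The contraction/expansion constants are not sharp requirements, so small rescalings of $\hat u_k,\hat s_k$ (already built into the definition of $\mathcal{N}_k$ so that all h-sets have the same size in local coordinates) give the needed slack.

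The main obstacle I anticipate is purely computational rather than conceptual: the integration time from $w_0$ out along the collision/ejection orbit and especially around the long homoclinic excursion is substantial, and the h-sets pass near the collision point $u=v=0$ and near the non-regularized primary (the $|u^2-1|$ singularity in $R$ and in $\hat\Gamma_{2,h_0}$), so the rigorous enclosures of $\Phi_s^{2,h_0}$ could blow up unless the sections $\Sigma_k$ are spaced closely enough that wrapping effects stay controlled — this is precisely why $K=18$ intermediate sections are used and why the parallel-shooting idea from the proof of Theorem \ref{thm:lyapunov_orbit_energy} is relevant. A secondary delicate point is making sure the interval-Newton evaluation of each $\psi_k$ (solving \eqref{eq:E-implicit-def}) succeeds on the full h-set $N_c$ and on its edges, and that the composed enclosure of $P_{k,k-1}$ is tight enough for the strict inequalities $<-1$ and $>+1$ to be certified; choosing the $\hat u_k$ roughly along the numerically computed unstable direction (and $\hat s_k$ along the stable direction, with the symmetry-dictated choices \eqref{eq:S_backsymmetric_stable_directions}--\eqref{eq:s1_u3_S_backsymmetry}) is what keeps $P_{k,k-1}$ near-diagonal and makes this work.
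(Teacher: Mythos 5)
Your proposal is correct and follows essentially the same route as the paper: rigorous CAPD interval verification of the conditions (\ref{eq:covering_relation_conditions}) for the forward links of (\ref{eq:covering_sequence_c1}) and $\mathcal{N}_3\xRightarrow{P_4}\cdots\xRightarrow{P_K}\mathcal{N}_K$, with (\ref{eq:covering_sequence_c2}) and (\ref{eq:covering_sequence_o2}) then deduced from Lemmas \ref{lem:S_backsymmetry} and \ref{lem:self-symmetric-covering} using the self $S$-symmetry of $\mathcal{N}_0$, $\mathcal{N}_2$, $\mathcal{N}_K$ and the identification $\mathcal{N}_3=S\mathcal{N}_1$. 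The only (harmless) deviation is that you also propose checking the back-cover $\mathcal{N}_2\xLeftarrow{P_3}\mathcal{N}_3$ directly via backward integration, whereas the paper obtains this link for free as part of the symmetry-derived chain (\ref{eq:covering_sequence_c2}).
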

\begin{proof}
    Existence of the covering relations (\ref{eq:covering_sequence_c1}) was proven by direct rigorous numeric validation performed with the CAPD package \cite{MR4283203,MR4395996}. Our local coordinates have been chosen in a way that the Poincar\'e maps expressed in the local coordinates are well aligned with the dynamics (see section \ref{sec:Sections-on-energy}). The CAPD package allows for the computation of Poincar\'e maps $P_{k}$. As discussed in section \ref{sec:Sections-on-energy}, the coordinate changes $\psi_{k}$ and $\psi_{k}^{-1}$ are also computable in interval arithmetic. As a result we can compute the interval enclosures of the Poincar\'e maps in the local coordinates and validate the covering relation conditions. 
    
    By applying Lemma \ref{lem:S_backsymmetry} from (\ref{eq:covering_sequence_c1}) we obtain that
    \[
        S\mathcal{N}_2 \xLeftarrow{P^S_2}
        S\mathcal{N}_1 \xLeftarrow{P^S_1}
        S\mathcal{N}_0.
    \]
    Since $\mathcal{N}_0$ and $\mathcal{N}_2$ are self $S$-symmetric and $\mathcal{N}_3=S\mathcal{N}_1$ by Lemma \ref{lem:self-symmetric-covering} we see that the above sequence of coverings is the sequence (\ref{eq:covering_sequence_c2}), only written in different notation, so we have established (\ref{eq:covering_sequence_c2}).

    We prove the existence of the covering relations $\mathcal{N}_3 \xRightarrow{P_4} ... \xRightarrow{P_K}
    \mathcal{N}_K$ with the direct rigorous numeric validation performed with CAPD package. This, together with (\ref{eq:covering_sequence_c1}) and (\ref{eq:covering_sequence_c2}), establishes (\ref{eq:covering_sequence_o1}). The sequence of covering relations (\ref{eq:covering_sequence_o2}) follows directly from (\ref{eq:covering_sequence_o1}) by applying Lemmas \ref{lem:S_backsymmetry} and \ref{lem:self-symmetric-covering}, since $\mathcal{N}_0$ and $\mathcal{N}_K$ are  self $S$-symmetric.

    The computer assisted validation of the conditions takes less than 5 minutes build time and execution time in total. The code is available at \cite{pcr3bp_code}. 
\end{proof}

\begin{corollary}
    \label{col:orbits_in_reg}
    For an arbitrary choice of sequence from $\{c,o\}^{\mathbb{Z}}$ there exists an orbit which behaves according to the rule:
    \begin{itemize}
        \item starts from $\mathcal{N}_0$, does not visit $\mathcal{N}_K$ and ends in $\mathcal{N}_0$ for symbol $c$,
        \item starts from $\mathcal{N}_0$, visits $\mathcal{N}_K$ and ends in $\mathcal{N}_0$ for symbol $o$.
    \end{itemize}
    Moreover, for any $n\in\mathbb{N}$, and for any choice of a finite sequence $\{c,o\}^{n}$, there exists a periodic orbit which behaves in the accordance to the above rule.
\end{corollary}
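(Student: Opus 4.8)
The plan is to derive Corollary~\ref{col:orbits_in_reg} from Theorem~\ref{thm:covering_relations_1_3} together with the shadowing Theorem~\ref{thm:generic_covering_relations}, by assembling two ``building blocks'' of covering relations, each of which begins and ends at $\mathcal{N}_0$, and then concatenating copies of them in any prescribed order. First I would form the \emph{collision block} $C$: chaining (\ref{eq:covering_sequence_c1}) and (\ref{eq:covering_sequence_c2}) gives
\[
\mathcal{N}_0 \xRightarrow{P_1} \mathcal{N}_1 \xRightarrow{P_2} \mathcal{N}_2 \xLeftarrow{P_3} \mathcal{N}_3 \xLeftarrow{P_0} \mathcal{N}_0,
\]
a chain of length four that starts and ends at $\mathcal{N}_0$ and whose list of h-sets does not contain $\mathcal{N}_K$. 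Next I would form the \emph{outer block} $O$ by chaining (\ref{eq:covering_sequence_o1}) and (\ref{eq:covering_sequence_o2}); these two chains share the endpoint $\mathcal{N}_K$ (which is self $S$-symmetric, so $S\mathcal{N}_K=\mathcal{N}_K$) and both have $\mathcal{N}_0$ at the other extremity, so their concatenation is a chain from $\mathcal{N}_0$ to $\mathcal{N}_0$ whose list of h-sets does include $\mathcal{N}_K$. A point worth recording is that every arrow occurring in $C$ and $O$ is realised by a forward Poincar\'e map along the regularized flow $\Phi_s^{2,h_0}$ --- the $P_k$ directly, and the $P^S_k$ from (\ref{eq:PS-maps}) as Poincar\'e maps between the $S$-images of the sections --- so any chain built out of these blocks will be traced out by a genuine forward orbit of $\Phi_s^{2,h_0}$.

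For the finite/periodic statement I would, given $n\in\mathbb{N}$ and $(a_1,\dots,a_n)\in\{c,o\}^n$, write down the single long cyclic chain obtained by laying the blocks $C$ (for each symbol $c$) and $O$ (for each symbol $o$) end to end in the prescribed order and identifying the final h-set with the initial one; this is legitimate because every block opens and closes at the same h-set $\mathcal{N}_0$, and because Theorem~\ref{thm:generic_covering_relations} permits the forward-covering and backcovering arrows to be mixed arbitrarily. Theorem~\ref{thm:generic_covering_relations} then yields a point $x$ in the interior of $\mathcal{N}_0$ whose $\Phi_s^{2,h_0}$-orbit crosses the successive Poincar\'e sections inside the successive h-sets of the chain and returns to $x$; since the composition of the Poincar\'e maps around the chain sends $x$ to $\Phi_{T(x)}^{2,h_0}(x)$ for a strictly positive flow time $T(x)$ (a sum of section-to-section times), $x$ lies on a genuine periodic orbit of $\Phi_s^{2,h_0}$, and by construction its itinerary is exactly the rule in the statement: each sojourn at $\mathcal{N}_0$ between blocks separates a $C$-excursion (which misses $\mathcal{N}_K$) from an $O$-excursion (which meets $\mathcal{N}_K$). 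This settles the ``Moreover'' part of the corollary.

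For a bi-infinite sequence $(a_i)_{i\in\mathbb{Z}}\in\{c,o\}^{\mathbb{Z}}$ I would pass to a limit of periodic approximants. For each $N$ apply the finite case to $(a_{-N},\dots,a_N)$ and then shift the time origin so that the resulting periodic orbit sits in $\mathcal{N}_0$ at the junction just before the $a_0$-block, obtaining a point $x_N$ in the support of $\mathcal{N}_0$; by compactness of that support some subsequence $x_{N_j}$ converges to a point $x$. For each fixed index $j$ and all large $N$, the orbit of $x_N$ realises the $a_j$-block between its $j$-th and $(j+1)$-th returns to $\mathcal{N}_0$, and the defining requirements are finitely many conditions of the form ``$\Phi_t^{2,h_0}(\cdot)\in\mathcal{N}_m$'' evaluated at section-crossing times that depend continuously on the base point and stay uniformly bounded (there are only two block types, each a finite concatenation of transversal section-to-section passages); hence these are closed conditions that survive the limit, so the orbit of $x$ realises the full bi-infinite itinerary. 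The step I expect to be the main obstacle is precisely this limiting argument: one must ensure that the section-return times of the approximants neither blow up nor collapse, so that the limiting object is really an orbit of the flow and the block structure is not lost --- this is where the transversality of the sections and the compactness of the h-sets are used. (The translation of this statement in regularized coordinates into the near-collision dynamics of Theorem~\ref{thm:primary_result} is carried out in Section~\ref{sec:collision-approach} and is not needed here.)
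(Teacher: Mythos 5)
Your proposal is correct and follows essentially the same route as the paper: the paper's proof likewise glues the ``collision'' chain (\ref{eq:covering_sequence_c1})--(\ref{eq:covering_sequence_c2}) and the ``outer'' chain (\ref{eq:covering_sequence_o1})--(\ref{eq:covering_sequence_o2}), both beginning and ending at $\mathcal{N}_0$, in the prescribed order and invokes Theorem \ref{thm:generic_covering_relations} together with Theorem \ref{thm:covering_relations_1_3}, noting that the first block misses $\mathcal{N}_K$ while the second visits it. Your explicit compactness/limiting argument for the bi-infinite case merely spells out what the paper delegates to the shadowing machinery of \cite{ZGLICZYNSKI200432} (with the minor caveat that ``does not visit $\mathcal{N}_K$'' passes to the limit because the approximating segments lie in fixed compact orbit enclosures disjoint from $\mathcal{N}_K$, not because it is a closed condition per se).
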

\begin{proof}
    The result follows by combining Theorems \ref{thm:generic_covering_relations} and \ref{thm:covering_relations_1_3}. We note that the covering relation sequences (\ref{eq:covering_sequence_c1}--\ref{eq:covering_sequence_c2}) and (\ref{eq:covering_sequence_o1}--\ref{eq:covering_sequence_o2}) can be glued in any prescribed order. Moreover, we see that the excursion which shadows (\ref{eq:covering_sequence_c1}--\ref{eq:covering_sequence_c2}) will not visit  $\mathcal{N}_K$, and the excursion which shadows (\ref{eq:covering_sequence_o1}--\ref{eq:covering_sequence_o2}) will visit $\mathcal{N}_K$.
\end{proof}

In the above corollary we have chosen the index ``c" to stand for ``collision". We have done so due to the fact that $\mathcal{N}_0$ is positioned at the a collision point $w_0$ of the ejection/collision orbit. We have chosen the index ``o" to stand for ``outer" dynamics. 

We finish by observing that the collision circle (\ref{eq:collision-circle}) is contained in $\Sigma_0$. The circle expressed in the local coordinates given by $\psi_0$ intersected with $\mathcal{N}_0$ produces a line passing through the origin; see Lemma \ref{lem:collision-in-psi0}.
As a consequence, the orbits from Corollary \ref{col:orbits_in_reg} might involve collisions at $\mathcal{N}_0$. In the next section we take a closer look at what happens on $\mathcal{N}_0$ which will allow us to ensure that orbits from Corollary \ref{col:orbits_in_reg} can avoid colliding with Earth.

\section{Collision approach\label{sec:collision-approach}}
In this section we extend the result proven in section \ref{sec:dynamics_in_regularized_system} by ensuring that the orbits which follow the symbolic dynamics avoid collisions. Furthermore, we show that such orbits can approach arbitrarily close to collision and then move away from it to a prescribed distance.

The idea for our construction is as follows. We will construct sequences of covering relations from $\mathcal{N}_K$ to $\mathcal{Q}_k$, where $\mathcal{Q}_k$ will be a self $S$-symmetric h-set inside of $ \mathcal{N}_0$, for $k\in \mathbb{N}$. Here $\mathcal{N}_K$ and $\mathcal{N}_0$ are the h-sets involved in the coverings (\ref{eq:covering_sequence_c1}--\ref{eq:covering_sequence_o2}) and $\mathcal{Q}_k$ will be defined later on. From our construction it will follow that the higher the choice of $k$, the more turns are made around the ejection/collision orbit. With each turn we will approach closer to a collision. See Figure \ref{fig:overview1} for a cartoon depicting the intuition. We will be careful though to construct additional h-sets inside of $\mathcal{N}_0$ so that we are sure that we avoid collisions.  The idea is depicted in a cartoon in Figure \ref{fig:overview2}. (More true and careful plots of a single step of the construction are also in Figures \ref{fig:lemma-2} and \ref{fig:lemma-3}.) Since both of the sets $\mathcal{N}_K$ and $\mathcal{Q}_k$ will be self $S$-symmetric, we will automatically obtain a sequence of coverings in the other direction: from $\mathcal{Q}_k$ to $\mathcal{N}_K$. The proof of Theorem \ref{thm:primary_result} will follow from appropriate compositions of such sequences.

\begin{figure}
\begin{center}
	\begin{overpic}[width=5cm]{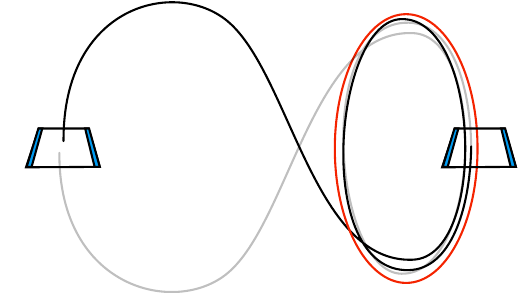}
		\put (99,30) {$\mathcal{N}_0$}
		\put (-5,30) {$\mathcal{N}_K$}
	\end{overpic}
\end{center}
\caption{With each turn around the ejection/collision orbit (depicted in red) we come closer to the collision. In black we depict an orbit which makes two such turns, i.e. passes through $\mathcal{N}_0$ twice. Existence of such orbits will follow from appropriate sequences of covering relations. By the $S$-symmetry of the system we will obtain $S$-symmetric sequences of covering relations, which will allow us to come back to $\mathcal{N}_K$. A resulting returning orbit is depicted in grey. \label{fig:overview1}}
\end{figure} 
\begin{figure}
\begin{center}
	\begin{overpic}[width=4.0cm]{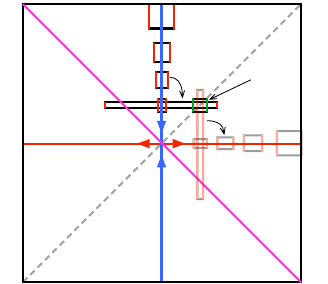}
	\put (95,20) {$\mathcal{N}_0$}
	\put (78,62) {$\mathcal{Q}$}
	\put (25,70) {$C$}
	\end{overpic}
\end{center}
\caption{A sequence of h-sets which consecutively cover one another is constructed along the stable manifold (in blue). We also have an self $S$-symmetric h-set $\mathcal{Q}$ involved in the sequence of coverings. The pink line $C$ is the set of collision points in $\mathcal{N}_0$. From the $S$-symmetry of the system there automatically exists a matching sequence of $S$-symmetric h-sets along the unstable manifold (in red). In total, this produces a sequence of h-sets which approach and depart from the collision; all of the h-sets along the stable/unstable manifolds, together with the h-set $\mathcal{Q}$, are disjoint the collision line $C$. From our construction it will follow that we can make such sequence to be of arbitrary length.  \label{fig:overview2}}
\end{figure} 

We start our discussion with an abstract result for a hyperbolic fixed point at the origin on the plane, where we show a construction of successive covering relations which approach the origin but stay in the first quarter of the plane. This is done in section \ref{sec:cover-fix-pt}. (This will be useful for us since it will turn out that the collision curve in $\mathcal{N}_0$ stays in the second and in the fourth quarter of the plane; as depicted in Figure \ref{fig:overview2}.) Then, in section \ref{sec:main-proof}, we apply the method result to prove Theorem \ref{thm:primary_result}.

\subsection{Covering relations in a neighbourhood of a hyperbolic fixed point\label{sec:cover-fix-pt}}
In this section we present an explicit construction of a sequence of covering relations that occur in a neighborhood of a hyperbolic fixed point on the plane. The trajectories shadowing that sequence of covering relations approach arbitrarily close the origin and remain in the first quarter of the plane. 

Recall that $N_c=[-1,1]^2\subset \mathbb{R}^2$. Let $f =(f_1,f_2): N_c \to \mathbb{R}^2$ be a  $C^1$ function such that $f(0) = 0$. We assume that there exist positive real numbers $\alpha, \beta, \rho, c$ such that the partial derivatives of $f$ satisfy:
\begin{equation}
    \label{eq:der_cond_1}
    \bigg[\frac{\partial f_{1}}{\partial z_1} (N_c) \bigg] > \alpha, \quad
    \bigg[\frac{\partial f_{1}}{\partial z_2} (N_c) \bigg] \subset (-c,0),
\end{equation}
\begin{equation}
    \label{eq:der_cond_2}
    \bigg[\frac{\partial f_{2}}{\partial z_1} (N_c) \bigg] \subset (0,c), \quad
    \bigg[\frac{\partial f_{2}}{\partial z_2} (N_c) \bigg] \subset \big( \beta, \rho \big),
\end{equation}
and $\rho > \beta$.

We define h-sets: 
\[
    R_{a,b} = [0,b] \times [a,b]
\]
and
\[
    Q_{a,b} = [a,b]\times [a,b].
\]
See Figure \ref{fig:lemma-2} for the depiction of these sets and for a visualization of the following lemma.
\begin{lemma}
    \label{lem:cov_rel_3}
    If $\alpha > 2c+\rho$ then for any real values $a$, $b$ such that $0 < a < b < 1$ and for $a',b'\in \mathbb{R}$ chosen as
    \begin{equation}
        a' = a \cdot \beta, \quad b' = \big(c + \rho \big) \cdot b,
    \end{equation}
    we have
    \begin{equation}
        R_{a,b} \xRightarrow{f} R_{a',b'} \quad\text{and}\quad
        R_{a,b} \xRightarrow{f} Q_{a',b'}.
    \end{equation}
\end{lemma}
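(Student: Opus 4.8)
The plan is to verify the three covering relation conditions from (\ref{eq:covering_relation_conditions}) for each of the two claimed coverings, working directly with the quadrangle supports and the explicit derivative bounds (\ref{eq:der_cond_1})--(\ref{eq:der_cond_2}). The h-sets $R_{a,b}$ and $Q_{a,b}$ are axis-aligned rectangles; the natural homeomorphism $c_N$ onto $R_{a,b}=[0,b]\times[a,b]$ sends $(p,q)\mapsto(\tfrac b2(p+1),\,\tfrac{b-a}{2}(q+1)+a)$, and similarly for $Q_{a,b}$, so the ``unstable'' (exit) direction is the first coordinate $z_1$ and the ``stable'' direction is $z_2$. Since the exit set is the pair of vertical edges, I need: (i) the $z_2$-coordinate of $f$ on the whole box $R_{a,b}$ lands inside the target's $z_2$-range $[a',b']$ (contraction), (ii) $f$ of the left edge $\{z_1=0\}\times[a,b]$ has $z_1$-coordinate below the target's left edge, and (iii) $f$ of the right edge $\{z_1=b\}\times[a,b]$ has $z_1$-coordinate above the target's right edge. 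Because the targets $R_{a',b'}$ and $Q_{a',b'}$ share the same $z_2$-range and $Q_{a',b'}\subset R_{a',b'}$ with the same right edge at $z_1=b'$ and left edge at $z_1=a'\ge 0$ (left edge of $R$), it suffices to prove the conditions for $Q_{a',b'}$ in the left-expansion direction and for $R_{a',b'}$ in the contraction direction — actually the cleanest route is to prove both simultaneously by showing $f_1=0$ on the left edge stays $\le 0 < a'$ and $f_2$ stays in $[a',b']$, which covers both targets at once.

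First I would estimate $f_2$ on $R_{a,b}$. Writing $f_2(z_1,z_2)=f_2(0,0)+\int_0^1 \bigl(\partial_{z_1}f_2\cdot z_1 + \partial_{z_2}f_2\cdot z_2\bigr)\,dt$ along the segment from $0$ to $(z_1,z_2)$, and using $f_2(0)=0$, $z_1\in[0,b]$, $z_2\in[a,b]$ together with $\partial_{z_1}f_2\in(0,c)$ and $\partial_{z_2}f_2\in(\beta,\rho)$, I get the lower bound $f_2 > 0\cdot b + \beta\cdot a = a\beta = a'$ and the upper bound $f_2 < c\cdot b + \rho\cdot b = (c+\rho)b = b'$. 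So $\pi_2 f(R_{a,b})\subset (a',b')\subset[a',b']$, which is the contraction condition for both $R_{a',b'}$ and $Q_{a',b'}$. Next, for the left-expansion condition I evaluate $f_1$ on the left edge $\{z_1=0,\ z_2\in[a,b]\}$: again $f_1(0,z_2)=f_1(0,0)+\int_0^1 \partial_{z_2}f_1(0,tz_2)\,z_2\,dt$ with $f_1(0)=0$ and $\partial_{z_2}f_1\in(-c,0)$, $z_2\ge a>0$, giving $f_1(0,z_2)<0$. Hence $\pi_1 f$ of the left edge is strictly negative, which is below both $0$ (the left edge of $R_{a',b'}$) and $a'>0$ (the left edge of $Q_{a',b'}$). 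For the right edge $\{z_1=b,\ z_2\in[a,b]\}$ I bound $f_1(b,z_2)=f_1(0,z_2)+\int_0^1\partial_{z_1}f_1(tb,z_2)\,b\,dt > f_1(0,z_2) + \alpha b$; since $f_1(0,z_2) > -cb$ (from $\partial_{z_2}f_1\in(-c,0)$ and $z_2\le b$), this gives $f_1(b,z_2) > (\alpha-c)b$. I want this to exceed the target right edge $b'=(c+\rho)b$, i.e. $(\alpha-c)b > (c+\rho)b$, equivalently $\alpha > 2c+\rho$, which is exactly the standing hypothesis. So the right-expansion condition holds for both targets (whose right edges coincide at $z_1=b'$).

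The remaining point is that these three inequalities genuinely imply the covering relations in the sense of Definition \ref{def:covering-relation}, not merely the checklist (\ref{eq:covering_relation_conditions}); but the paper already states that (\ref{eq:covering_relation_conditions}) is what is verified in the computer-assisted proofs to assert coverings, and the standard argument (straight-line homotopy to the linear model $(p,q)\mapsto(ap,0)$, which is valid precisely because the exit-set image misses the target and the full image misses the top/bottom edges — the two conditions we verified) supplies the homotopy; I would simply cite that. One should also double-check the mild consistency requirements $0<a'<b'<1$: $a'=a\beta$ and $b'=(c+\rho)b$, and since $a<b<1$ one needs $\beta$ small enough and $c+\rho$ not too large relative to $1$ — but in the intended application $a,b$ are taken successively smaller, so $b'<1$ is arranged by choosing $b$ small, and $a'<b'$ follows from $a\beta < b(c+\rho)$ which holds because $a<b$ and (generically) $\beta<c+\rho$; I would note this explicitly rather than belabor it. I expect the only real subtlety — hardly an obstacle — is bookkeeping the orientation/labeling of exit edges consistently for the two target sets so that the single computation of $f_1$ on the two vertical edges of $R_{a,b}$ serves both claimed coverings; the derivative estimates themselves are routine mean-value/integral bounds, and the inequality $\alpha>2c+\rho$ was manifestly reverse-engineered to make the right-expansion step close.
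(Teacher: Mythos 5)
Your proposal is correct and follows essentially the same route as the paper: the paper also verifies the contraction, left-expansion and right-expansion conditions of (\ref{eq:covering_relation_conditions}) by combining $f(0)=0$ with the derivative enclosures (\ref{eq:der_cond_1})--(\ref{eq:der_cond_2}) (via $f(z)\subset Df(N_c)z$), obtaining $\pi_2 f(R_{a,b})\subset(a\beta,(c+\rho)b)$, $\pi_1 f(R_{a,b}^{l})<0$ and $\pi_1 f(R_{a,b}^{r})>(\alpha-c)b>b'$, exactly your three estimates, serving both targets at once since they share the $z_2$-range and the right edge. Your side worries are harmless: $a'<b'$ is automatic from $a<b$, $\beta<\rho$ and $c>0$, and the condition $c+\rho<1$ is only imposed later, in Lemma \ref{lem:gk-approach}, not needed here.
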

\begin{proof}
    Since $f(0) = 0$, by the mean value theorem we see that:
    \begin{equation}
        \label{eq:mean_value_theorem}
        f(z) \subset Df(N_c) z
    \end{equation}
    for every $z$ that belongs to $N_c$. We notice that:
    \begin{align}
        \begin{aligned}
        \pi_2 f\big( R_{a,b} \big)
            &\subset [0,b]\cdot (0,c) + [a,b] \cdot \big( \beta, \rho \big) \\
            &\subset \big( a \cdot \beta, (c + \rho) \cdot b \big) \\
            &\subset (a',b')
        \end{aligned}
    \end{align}
    which asserts the contraction condition for both covering relations. Futhermore, we notice that:
    \begin{align}
        \begin{aligned}
            \pi_1 f\big( R^{l}_{a,b} \big) < 0\cdot [0,b] + (-c,0)\cdot [a,b] < 0
        \end{aligned}
    \end{align}
    which asserts the left expansion condition for both covering relations. Finally, we notice that:
    \begin{align}
        \begin{aligned}
            \pi_1 f\big( R^{r}_{a,b} \big)
            &> \alpha b + (-c,0) \cdot [a,b] \\
            &> (\alpha -c)b \\
            &> (c + \rho)b \\
            &> b',
        \end{aligned}
    \end{align}
    which asserts the right expansion condition for both covering relations and concludes the proof.
\end{proof}

The bounds (\ref{eq:der_cond_1}--\ref{eq:der_cond_2}) and the assumptions of Lemma \ref{lem:cov_rel_3} are based on the fact that the stable and unstable manifolds of the fixed point are not tangent to the axes (see Figure \ref{fig:lemma-2}).

\begin{figure}[h]
    \begin{center}
    	\begin{overpic}[width=8cm]{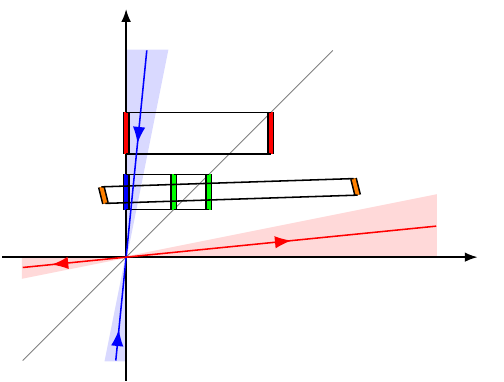}
		\put (95,27){$z_1$}
		\put (27,75){$z_2$}
		\put (15,52){$R_{a,b}$}
		\put (60,42){$f(R_{a,b})$}
		\put (14,42){$R_{a',b'}$}
		\put (38,30){$Q_{a',b'}$}
   	\end{overpic}
    \end{center}
    \caption{A schematic picture of an h-set $R_{a,b}$ (red), its image under $f$ (orange), an h-set $R_{a',b'}$ (blue) and an h-set $Q_{a',b'}$ (green). Shaded blue and red areas represent bounds for the stable and unstable manifolds respectively. }
    \label{fig:lemma-2}
\end{figure}

Consider now a map $g$, for which the coordinates are chosen so that the stable and unstable manifolds are aligned with the axes. We can then define
\begin{equation}
    \label{eq:f-def-avoid-collision}
    f=\eta_L^{-1}\circ g\circ \eta_L
\end{equation}
(see (\ref{eq:eta-L}) for the definition of $\eta_L$).

In such case, for a given map $g$ and for a chosen constant $L$, we can check (in interval arithmetic) that (\ref{eq:der_cond_1}--\ref{eq:der_cond_2}) is fulfilled by $f$ (see Figure \ref{fig:eta_L_diagram}).

\begin{figure}[h]
    \begin{center}
    	\begin{overpic}[width=5cm]{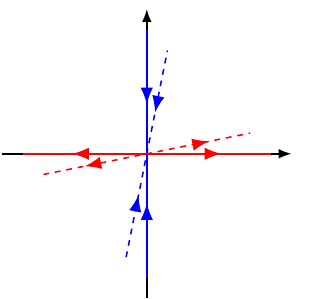}
	\put (90,50){$z_1$}
	\put (50,90){$z_2$}
	\end{overpic}
    \end{center}
    \caption{Stable and unstable manifolds of the map $g$ (continuous blue and red lines, respectively) and stable and unstable manifolds of the map $f$ for the parameter $L=1/5$ (dashed blue and red lines, respectively). }
    \label{fig:eta_L_diagram}
\end{figure}

\begin{figure}[h]
    \begin{center}
    \begin{overpic}[width=6cm]{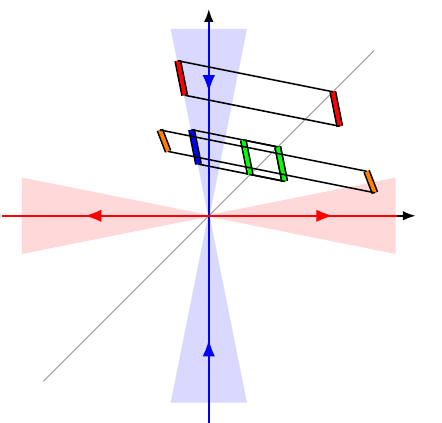}
        \put (93,50){$z_1$}
        \put (50,93){$z_2$}
	\end{overpic}
    \end{center}
    \caption{Schematic picture of h-set $R_k$ (red), its image under $g$ (orange), h-set $R_{k+1}$ (blue) and h-set $Q_{k+1}$ (green). Shaded blue and red areas represent rigorous bounds for the stable and unstable manifolds of the maps $g_i$ respectively. }
    \label{fig:lemma-3}
\end{figure}

Consider fixed real values $a,b \in (0,1)$ such that $a<b$ and consider the following h-sets (see Figure \ref{fig:lemma-3}):
\begin{equation}
    R_{k+1} := \eta_L R_{\beta^k a, ( c+\rho )^k b}, \quad
    Q_{k+1} := \eta_L Q_{\beta^k a, ( c+\rho )^k b} \quad \text{for }k\in \mathbb{N}. \label{eq:Rk-Qk-def}
\end{equation}
 These h-sets are in the coordinates of the map $g$. (We index the sets in (\ref{eq:Rk-Qk-def}) somewhat artificially by having $k+1$ on the left. This is done intentionally to start with $R_{1}, Q_{1}$. Such numbering will then match well with the numbering of Poincar\'e sections on which we will position such sets in the construction for the proof of Theorem \ref{thm:primary_result}.)
 
\begin{lemma}
    \label{lem:gk-approach}
    Assume that for $f$ defined in (\ref{eq:f-def-avoid-collision}) we have the bounds (\ref{eq:der_cond_1}--\ref{eq:der_cond_2}) and that $\alpha >2c+\rho $ and $c+\rho <1$.
    Then for every $k\in \mathbb{N}$ 
    \begin{align}
        & R_{k} \xRightarrow{g} R_{k+1},  \label{eq:RR-G-implies} \\
        & R_{k} \xRightarrow{g} Q_{k+1}.  \label{eq:RQ-G-implies}
    \end{align}
\end{lemma}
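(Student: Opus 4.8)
\textbf{Proof plan for Lemma \ref{lem:gk-approach}.}

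The plan is to deduce the two covering relations directly from Lemma \ref{lem:cov_rel_3} by unwinding the definitions in (\ref{eq:Rk-Qk-def}) and (\ref{eq:f-def-avoid-collision}). The key observation is that, by definition, $R_{k}=\eta_L R_{a_{k-1},b_{k-1}}$ and $R_{k+1}=\eta_L R_{a_k,b_k}$, where I write $a_j:=\beta^j a$ and $b_j:=(c+\rho)^j b$ (and similarly for the $Q$'s); in particular $a_k = \beta\cdot a_{k-1}$ and $b_k = (c+\rho)\cdot b_{k-1}$, which is exactly the relation $a'=a\beta$, $b'=(c+\rho)b$ appearing in the statement of Lemma \ref{lem:cov_rel_3}. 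So the covering $R_{k}\xRightarrow{g}R_{k+1}$ is, after conjugating by $\eta_L$, precisely the covering $R_{a_{k-1},b_{k-1}}\xRightarrow{f}R_{a_k,b_k}$ asserted by Lemma \ref{lem:cov_rel_3}, and likewise for the $Q$ version. I would make the conjugation step explicit: since $f=\eta_L^{-1}\circ g\circ\eta_L$ and the supports of $R_k,R_{k+1}$ are the $\eta_L$-images of the supports of $R_{a_{k-1},b_{k-1}},R_{a_k,b_k}$, the map $f_c$ used to check the covering for $g$ between the $\eta_L$-rescaled h-sets literally coincides with the map $f_c$ used to check the covering for $f$ between the unrescaled ones (this is the same remark used in the proof of Lemma \ref{lem:self-symmetric-covering}); hence one covering holds iff the other does.

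The remaining point is to verify that the hypotheses of Lemma \ref{lem:cov_rel_3} are met for each $k$. Lemma \ref{lem:cov_rel_3} requires $\alpha>2c+\rho$, which is assumed here verbatim, and it requires $0<a_{k-1}<b_{k-1}<1$. For the latter: $a_{k-1}=\beta^{k-1}a>0$ and $b_{k-1}=(c+\rho)^{k-1}b$; since $c+\rho<1$ (the second standing assumption of this lemma) and $0<b<1$, we get $b_{k-1}\le b<1$ for all $k\ge 1$, so $b_{k-1}\in(0,1)$. It remains to check $a_{k-1}<b_{k-1}$. Here one should note that (\ref{eq:der_cond_2}) gives $\beta<\rho<c+\rho$, so $\beta<c+\rho$, hence $\beta^{k-1}<(c+\rho)^{k-1}$ for $k\ge 1$ (with equality at $k=1$), and combined with $a<b$ this yields $a_{k-1}=\beta^{k-1}a \le \beta^{k-1}b < (c+\rho)^{k-1}b = b_{k-1}$ for $k\ge 1$; for $k=1$ it is simply $a<b$. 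Thus for every $k\in\mathbb N$ the values $a_{k-1},b_{k-1}$ satisfy $0<a_{k-1}<b_{k-1}<1$, and Lemma \ref{lem:cov_rel_3} applies with $a=a_{k-1}$, $b=b_{k-1}$, $a'=a_k$, $b'=b_k$, giving both $R_{a_{k-1},b_{k-1}}\xRightarrow{f}R_{a_k,b_k}$ and $R_{a_{k-1},b_{k-1}}\xRightarrow{f}Q_{a_k,b_k}$.

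Combining the two steps proves (\ref{eq:RR-G-implies}) and (\ref{eq:RQ-G-implies}) for all $k\in\mathbb N$. I do not expect any serious obstacle here: the content is entirely in Lemma \ref{lem:cov_rel_3}, and this lemma is a bookkeeping statement that packages the geometric sequences $a_k=\beta^k a$, $b_k=(c+\rho)^k b$ and handles the $\eta_L$-conjugation. The only mildly delicate item is making the conjugation argument airtight — i.e., checking carefully that passing a covering relation through the linear change of coordinates $\eta_L$ preserves it, which follows because $\eta_L$ is a homeomorphism and $f_c$ is invariant under absorbing $\eta_L$ into the h-set homeomorphisms $c_N$ — but this is exactly the mechanism already invoked for the self-symmetry lemmas, so it is routine. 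A secondary point worth stating cleanly is why $c+\rho<1$ is needed: it guarantees the $b_k$ stay inside $(0,1)$ so that all the h-sets genuinely sit inside $N_c$ and the iteration can be continued indefinitely, which is what will later force the construction to approach the collision.
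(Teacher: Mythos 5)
Your proposal is correct and takes essentially the same route as the paper, whose proof of this lemma is literally the one-line statement that it is a direct consequence of Lemma \ref{lem:cov_rel_3}; you simply spell out the bookkeeping (the geometric recursion $a_k=\beta a_{k-1}$, $b_k=(c+\rho)b_{k-1}$, the check $0<a_{k-1}<b_{k-1}<1$ using $c+\rho<1$ and $\beta<\rho$, and the $\eta_L$-conjugation that identifies the $f_c$ for $g$ between the rescaled h-sets with the $f_c$ for $f$ between the unrescaled ones) that the paper leaves implicit.
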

\begin{proof}
    The result is a direct consequence of Lemma \ref{lem:cov_rel_3}.
\end{proof}

\subsection{Proof of Theorem \ref{thm:primary_result}\label{sec:main-proof}}

In this section we prove Theorem \ref{thm:primary_result}. We do so by first formulating a number of auxiliary lemmas. The first two concern how collisions relate to the h-sets which are involved in the coverings (\ref{eq:covering_sequence_c1}--\ref{eq:covering_sequence_c2}). We then complement the sequences of coverings (\ref{eq:covering_sequence_o1}--\ref{eq:covering_sequence_o2}) with additional h-sets which approach a collision point, but which are disjoint from the collision set. We do this by using the techniques from section \ref{sec:cover-fix-pt}. The proof of Theorem \ref{thm:primary_result} is then given at the end of this section.

Let $C:\mathbb{R}^4\to \mathbb{R}^3$ be a function defined as
\[
    C(u,v,p_u,p_v) := \left( u, v, p_u^2 + p_v^2 - 8\mu_2 \right).
\]
By (\ref{eq:collision-circle}) we see that  $w \in \mathbb{R}^4$ satisfying $C(w) = 0$ correspond to a collision with the regularized mass, which is the Earth in our case. 
\begin{lemma} 
    \label{lem:collision_avoidance_1} 
    Any orbit which is a result of shadowing the sequences of covering relations (\ref{eq:covering_sequence_c1}--\ref{eq:covering_sequence_o2})  can collide with Earth only at the set $\psi_0(\mathcal{N}_0)$.
\end{lemma}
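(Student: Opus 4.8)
The plan is to track where a collision with the regularized mass can occur along any orbit shadowing the covering sequences. Recall that a collision with Earth corresponds to $C(w)=0$, i.e.\ $(u,v)=(0,0)$ together with $p_u^2+p_v^2=8\mu_2$. First I would observe that the only Poincar\'e section among $\Sigma_0,\ldots,\Sigma_K$ (and their $S$-images $S\Sigma_1,\ldots,S\Sigma_{K-1}$) whose support intersects the collision set is $\Sigma_0$: by construction $\Sigma_0=\{v=0\}$ passes through $w_0=(0,0,0,\sqrt{8\mu_2})$, which lies on the collision circle (\ref{eq:collision-circle}), whereas all the other sections $\Sigma_k$, $k=1,\ldots,K$, are positioned at points $w_k$ which lie on the ejection/collision orbit (for $k=1,2,3$) or on its homoclinic (for $k=4,\ldots,K$) and are separated from the origin in the $(u,v)$-plane. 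This separation is something that is verified in the rigorous computation: the h-sets $\mathcal{N}_k=(N_c,\psi_k)$ for $k\geq 1$ have supports $\psi_k(N_c)$ whose projections onto the $(u,v)$ coordinates are bounded away from $(0,0)$, hence contain no collision points, and the same holds for $S\mathcal{N}_k$ since $S$ fixes the $u$-coordinate and negates $v$, so it maps the collision set to itself but keeps $S\psi_k(N_c)$ away from $(0,0)$ as well.

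Next I would argue that an orbit shadowing one of the sequences (\ref{eq:covering_sequence_c1}--\ref{eq:covering_sequence_o2}) visits the regularized collision set only when it passes through one of these h-sets. By Theorems \ref{thm:generic_covering_relations} and \ref{thm:cover-discs}, the shadowing orbit is constrained, at each ``step'' of the sequence, to lie in the support of the corresponding h-set $\mathcal{N}_i$ (or $S\mathcal{N}_i$) on the section $\Sigma_i$ (or $S\Sigma_i$); between two consecutive sections the orbit is just a flow segment of $\Phi_s^{2,h_0}$. On such a flow segment the trajectory cannot reach $u=v=0$, because the regularized flow $\Phi_s^{2,h_0}$ is well-defined precisely on the complement of the collision, and a genuine collision in the regularized coordinates is an isolated event that, by the choice of sections, can only be registered on $\Sigma_0$ (the point $w_0$ itself being a collision point through which the ejection/collision orbit passes). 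More carefully: if the orbit met $(u,v)=(0,0)$ at some regularized time $s$, then at that instant $\Gamma_{2,h_0}=0$ forces $p_u^2+p_v^2=8\mu_2$, and the value $p_v$ is then $\pm\sqrt{8\mu_2}$ with $p_u=0$; this is exactly the point $w_0$ up to the sign of $p_v$, and $w_0\in\Sigma_0$. Hence every collision along such an orbit happens on $\Sigma_0$, inside $\psi_0(\mathcal N_0)=\psi_0(N_c)$, which is the claimed set $\psi_0(\mathcal{N}_0)$.

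Finally I would assemble these observations: combining the sequences (\ref{eq:covering_sequence_c1}--\ref{eq:covering_sequence_o2}) in any order, as in Corollary \ref{col:orbits_in_reg}, every h-set appearing in the composite sequence other than $\mathcal{N}_0$ is one of $\mathcal{N}_1,\ldots,\mathcal{N}_K$ or $S\mathcal{N}_1,\ldots,S\mathcal{N}_{K-1}$, all of which have supports disjoint from the collision set by the first paragraph. Therefore a shadowing orbit can only collide with Earth while it sits in $\mathcal{N}_0$, i.e.\ at a point of $\psi_0(\mathcal{N}_0)$. I expect the main obstacle to be making precise and rigorous the claim that no collision occurs on the connecting flow segments between sections --- that is, certifying that the interval enclosures of the trajectory pieces computed by CAPD between $\Sigma_{k-1}$ and $\Sigma_k$ stay away from $(u,v)=(0,0)$ --- and, relatedly, checking that each h-set support $\psi_k(N_c)$ (and its $S$-image) is genuinely separated from the origin in the $(u,v)$-plane; these are the points where the abstract topological argument has to be married to the rigorous numerics.
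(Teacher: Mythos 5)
Your overall strategy --- note that the collision set $\{C=0\}$ lies in $\Sigma_0=\{v=0\}$, check that the h-sets used for $k\ge 1$ avoid it, and certify by rigorous numerics that the connecting flow segments avoid it --- is in the same spirit as the paper's proof, which is an almost entirely computer-assisted verification. However, two of your intermediate claims are false, and they matter. First, the regularized flow $\Phi_s^{2,h_0}$ is \emph{not} undefined at $u=v=0$: the whole point of the Levi-Civita regularization is that the flow passes smoothly through the collision with Earth (only the collision with the non-regularized mass, $\{u=\pm 1,\ v=0\}$ for $i=2$, is excluded from the phase space). Hence there is no soft a priori reason why a connecting flow segment cannot hit a collision --- the ejection/collision orbit itself does --- and this must be ruled out by validated enclosures, so your ``more careful'' argument does not carry the weight you assign to it. Second, at a collision the condition $\Gamma_{2,h_0}=0$ forces only $p_u^2+p_v^2=8\mu_2$, i.e.\ the full collision circle (\ref{eq:collision-circle}); it does not force $p_u=0$, so a collision point is not ``exactly $w_0$ up to the sign of $p_v$''. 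The only fact available, and the one the paper uses, is the weaker statement $\{C=0\}\subset\Sigma_0$, and being on $\Sigma_0$ does not by itself place the collision inside $\psi_0(\mathcal{N}_0)$.

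Moreover, the numerical certification you propose (``the enclosures between $\Sigma_{k-1}$ and $\Sigma_k$ stay away from $(u,v)=(0,0)$'') cannot work for the leg that starts at $\psi_0(\mathcal{N}_0)$: by Lemma \ref{lem:collision-in-psi0} the collision line crosses $\mathcal{N}_0$, so the initial enclosure itself contains collision points and a check $C\ne 0$ along that whole segment is impossible. This is exactly where the paper's proof does something different: it verifies that orbits starting in $\psi_0(\mathcal{N}_0)$ leave $\Sigma_0$ immediately and do not revisit $\Sigma_0$ before reaching $\Sigma_1$; since $\{C=0\}\subset\Sigma_0$, this excludes collisions on that leg except at the initial point, which is precisely what the lemma permits. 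For the legs from $\Sigma_k$ to $\Sigma_{k+1}$ with $k=1,\ldots,K-1$ the paper does validate $C\ne 0$ along the entire orbit enclosure (using the \texttt{SolutionCurve} functionality of CAPD), and the reversed sequences are handled by the $S$-symmetry, as you anticipate. So the concrete gap to close is to replace your analytic shortcut by the explicit ``does not revisit $\Sigma_0$'' verification for the $\Sigma_0\to\Sigma_1$ leg, keeping the direct $C\ne 0$ verification only for the legs that start away from the collision.
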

\begin{proof}
    The proof follows from direct numerical computations performed in CAPD. The CAPD library provides the functionality of obtaining bounds for orbit enclosures. (This is achieved with the use of the {\tt SolutionCurve} component of the library.) We have therefore validated that $C\ne 0$ for the entire bound on the orbits starting at $\psi_k(\mathcal{N}_k)\subset \Sigma_k$ and ending at $\Sigma_{k+1}$ for $k=1,\ldots,K-1$. The set $\{C=0\}$ is contained in $\Sigma_0$. We have checked that orbits starting from $\psi_0(\mathcal{N}_0)\subset \Sigma_0$ immediately leave $\Sigma_0$ and do not revisit it while crossing to $\Sigma_1$. This way we have ensured in our computer assisted program that the orbits resulting from the sequences (\ref{eq:covering_sequence_c1}), (\ref{eq:covering_sequence_o1}) can only meet the collision at $\psi_0(\mathcal{N}_0)$. The same fact for (\ref{eq:covering_sequence_c2}), (\ref{eq:covering_sequence_o2}) follows from the symmetry of the system.
\end{proof}
In our proof we will be using the method described in section \ref{sec:cover-fix-pt}. In particular, we will use the change of coordinates $\eta_L$ from (\ref{eq:eta-L}) and the choice of the parameter that turned out to be suitable for us is $L = 0.0039.$
\begin{lemma}
    \label{lem:collision_avoidance_3}
    The image of the collision set $\{C=0\}$ in the local coordinates given by $\psi_0\circ \eta_L$, i.e. $(\psi_0\circ \eta_L)^{-1}(\{C=0\})\subset \mathbb{R}^2$ intersects with the box $[0,1]^2\subset \mathbb{R}^2$ at a single point, which is the origin.
\end{lemma}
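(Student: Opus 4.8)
The plan is to compute the preimage $(\psi_0\circ\eta_L)^{-1}(\{C=0\})$ explicitly and then intersect it with $[0,1]^2$. First I would recall from Lemma~\ref{lem:collision-in-psi0} that in the coordinates given by $\psi_0$ the collision set $\{C=0\}$ is exactly the line $z_1+z_2=0$; this is because the $(u,v)$-component of $\psi_0(z_1,z_2)$ vanishes precisely when $u=d_1(z_1+z_2)=0$, and $v\equiv0$ on $\Sigma_0$, while the third slot of $C$ vanishes automatically on $\Sigma_0\cap\{\Gamma_{2,h_0}=0\}$ by the collision-circle identity \eqref{eq:collision-circle}. So after composing with $\psi_0$ alone the collision set is the antidiagonal $\ell=\{z_1+z_2=0\}$, and I need the set $\eta_L^{-1}(\ell)\cap[0,1]^2$.

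Next I would use $\eta_L^{-1}=\eta_{-L}$ (noted in the remark following \eqref{eq:eta-L}). Writing $\eta_{-L}(z)=\frac{1}{1-L}\left(\begin{smallmatrix}1 & L\\ L & 1\end{smallmatrix}\right)z$, a point $z=(z_1,z_2)$ satisfies $\eta_{-L}(z)\in\ell$ iff the two components of $\eta_{-L}(z)$ sum to zero, i.e. iff $\frac{1}{1-L}\big((1+L)z_1+(1+L)z_2\big)=0$, which again reduces to $z_1+z_2=0$. Thus $(\psi_0\circ\eta_L)^{-1}(\{C=0\})$ is once more the antidiagonal line $z_1+z_2=0$ — the rescaling $\eta_L$ preserves the antidiagonal since $\left(\begin{smallmatrix}1 & -L\\ -L & 1\end{smallmatrix}\right)$ commutes with the reflection $j(p,q)=(q,p)$ and fixes the eigenvector $(1,-1)$. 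Intersecting the line $z_1+z_2=0$ with the box $[0,1]^2$ gives exactly the single point $(0,0)$, since on that line $z_2=-z_1$, and $z_1\ge0$ together with $z_2=-z_1\ge0$ forces $z_1=z_2=0$. This proves the claim.

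The only mild subtlety — and the step I would be most careful about — is making sure the antidiagonal is genuinely invariant under $\eta_L$ and $\eta_L^{-1}$, rather than merely being mapped to some other line through the origin; this is what makes the conclusion clean and independent of the precise value $L=0.0039$. It follows because $(1,-1)$ is an eigenvector of the symmetric matrix defining $\eta_L$ (with eigenvalue $\frac{1+L}{1+L}=1$ after the normalization, or in any case a nonzero eigenvalue), so the line spanned by $(1,-1)$ is preserved. I do not expect any real obstacle here; the lemma is essentially a direct linear-algebra computation built on Lemma~\ref{lem:collision-in-psi0}.
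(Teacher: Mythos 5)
Your proposal is correct and matches the paper's argument, which simply cites Lemma \ref{lem:collision-in-psi0} together with (\ref{eq:eta-L}): the collision set in the $\psi_0$ coordinates is the antidiagonal $z_1+z_2=0$, which is preserved by $\eta_L^{\pm 1}$ since $(1,-1)$ is an eigenvector, and that line meets $[0,1]^2$ only at the origin (which is indeed a collision point, so the intersection is nonempty). Your check is stated for $\eta_{-L}(z)\in\ell$ rather than $\eta_L(z)\in\ell$, but since the line is invariant under both maps this directional slip is harmless, as your own eigenvector remark makes clear.
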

\begin{proof}
    This follows directly from Lemma \ref{lem:collision-in-psi0} and (\ref{eq:eta-L}).
\end{proof}

Let us define h-sets on sections (according to definition \ref{def:h_set_on_section}):
\[
    \mathcal{R}_k = ( R_k, \psi_{k \mod 4} ),
\]
\[
    \mathcal{Q}_k = ( Q_k, \psi_{k \mod 4} ),
\]
where the constants used $a,b,c,\alpha,\beta,\rho$ in these sets are chosen by us as\footnote{Such choice of constants was dictated by trial and error and leads to the desired properties.}
\[
    a=\frac{1}{256},\quad b=\frac{255}{256},\quad c=0.021, \quad
    \alpha = 5.09,\quad \beta=0.195,\quad \rho=0.197.
\]
Let us notice, that these h-sets will be consecutively positioned on $\Sigma_{0},\Sigma_{1},\Sigma_{2}$ and $\Sigma_{3}$. 

By Lemma \ref{lem:Q-self-S-symmetric}, for every $k\in\mathbb{N}$  the h-set $\mathcal{Q}_{4k}$, which is on $\Sigma_{0}$, is self $S$-symmetric. We emphasize that in the local coordinates given by $\psi_{0}$ the origin is a collision point. The larger the $k$ the closer $\mathcal{Q}_{4k}$ is to this collision. At the same time by Lemma \ref{lem:collision_avoidance_3} we know that $\mathcal{R}_{4k}$ and $\mathcal{Q}_{4k}$, for $k\in\mathbb{N}$, can not contain collisions.

Since we have many Poincar\'e sections, in the below statements and derivations we will add the information on which sections the corresponding h-sets are positioned by writing the sections below.  

\begin{lemma}
    \label{lem:R-Q-coverings}
    For $k\in\mathbb{N}$ we have the following coverings (here we use the short-hand notation (\ref{eq:sec-covering-1} --\ref{eq:sec-covering-2}))
\begin{equation}
\begin{array}[c]{ccccccccc}
\mathcal{R}_{4k} & \overset{P_{1}}{\implies} & \mathcal{R}_{4k+1}  & \overset{P_{2}}{\implies}& \mathcal{R}_{4k+2} & \overset{P_{3}}{\implies} & \mathcal{R}_{4k+3} & \overset{P_{0}}{\implies} & \mathcal{R}_{4\left(  k+1\right)  }\\
& & &  & &  & \mathcal{R}_{4k+3} & \overset{P_{0}}{\implies} & \mathcal{Q}_{4\left(  k+1\right)  }. \medskip\\
\Sigma_{0} & & \Sigma_{1}  & & \Sigma_{2}  &  & \Sigma_{3} &  & \Sigma_{0}%
\end{array}
\label{eq:R-Q-coverings}
\end{equation}

\end{lemma}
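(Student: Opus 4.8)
# Proof Plan for Lemma \ref{lem:R-Q-coverings}

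The plan is to reduce the claimed chain of covering relations to two ingredients: the already-established covering sequence along the ejection/collision orbit from Theorem \ref{thm:covering_relations_1_3}, and the abstract fixed-point construction of Lemma \ref{lem:gk-approach}. The key observation is that $\mathcal{N}_0 \xRightarrow{P_1} \mathcal{N}_1 \xRightarrow{P_2} \mathcal{N}_2$ together with the backcoverings $\mathcal{N}_2 \xLeftarrow{P_3} \mathcal{N}_3 \xLeftarrow{P_0} \mathcal{N}_0$ (equivalently, the forward coverings $\mathcal{N}_2 \xRightarrow{P_3^{-1}} \mathcal{N}_3$ is not what we want --- rather, using the $S$-symmetry we can rewrite \eqref{eq:covering_sequence_c2} so that, after relabeling, the full loop $\Sigma_0 \to \Sigma_1 \to \Sigma_2 \to \Sigma_3 \to \Sigma_0$ carries a single return map $g := P_{00} = (\psi_0)^{-1}\circ P_0\circ P_3\circ P_2\circ P_1\circ\psi_0$ which has $0$ as a hyperbolic fixed point). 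The h-sets $R_k$, $Q_k$ live (up to the index shift $k \bmod 4$) on these four sections, and the composite return map $g$ on $\Sigma_0$ is precisely the map to which the hypotheses \eqref{eq:der_cond_1}--\eqref{eq:der_cond_2} of Lemma \ref{lem:gk-approach} are meant to apply.

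First I would make precise how the four per-step maps $P_1,P_2,P_3,P_0$ expressed in the local coordinates $\psi_{k\bmod 4}$ compose to the single map $g$ on $\Sigma_0$. Concretely, set $g = P_{(0)(3)}\circ P_{(3)(2)}\circ P_{(2)(1)}\circ P_{(1)(0)}$ in the notation of \eqref{eq:local-poinc-map}, where the intermediate local-coordinate maps are $P_{(1)(0)} = \psi_1^{-1}\circ P_1\circ \psi_0$, etc. By \eqref{eq:points-on-orbit} the point $0$ (image of $w_0$ under $\psi_0^{-1}$) is a fixed point of $g$, and by construction of the $\hat u_k,\hat s_k$ the linearization $Dg(0)$ is hyperbolic with the unstable/stable directions roughly aligned with the axes. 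The point of introducing $\eta_L$ with $L = 0.0039$ (as flagged in section \ref{sec:main-proof}) is that $f := \eta_L^{-1}\circ g\circ \eta_L$ then provably satisfies \eqref{eq:der_cond_1}--\eqref{eq:der_cond_2} with the stated constants $c=0.021$, $\alpha = 5.09$, $\beta = 0.195$, $\rho = 0.197$; this is a rigorous interval-arithmetic check on the derivative of $g$ over the relevant box, carried out with CAPD exactly as in the proof of Theorem \ref{thm:covering_relations_1_3}. One verifies the numerical inequalities $\alpha > 2c+\rho$ (i.e. $5.09 > 0.239$) and $c+\rho = 0.218 < 1$, so the hypotheses of Lemma \ref{lem:gk-approach} hold.

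Next I would unwind the definitions to see that the single-step covering $\mathcal{R}_{4k}\overset{P_1}{\implies}\mathcal{R}_{4k+1}\overset{P_2}{\implies}\mathcal{R}_{4k+2}\overset{P_3}{\implies}\mathcal{R}_{4k+3}\overset{P_0}{\implies}\mathcal{R}_{4(k+1)}$ is, after composing, literally the statement $R_k \xRightarrow{g} R_{k+1}$ from \eqref{eq:RR-G-implies} --- but one has to be slightly careful, since a composition of covering relations is itself a covering relation only if the intermediate h-sets match up correctly. Here the four intermediate h-sets are $\mathcal{R}_{4k},\mathcal{R}_{4k+1},\mathcal{R}_{4k+2},\mathcal{R}_{4k+3}$; I would argue that the geometric content --- expansion in the first (unstable) coordinate, contraction in the second --- is preserved along each of the four individual Poincaré maps because each $\psi_k$ was chosen so that $P_k$ in local coordinates is well-aligned with the dynamics (section \ref{sec:Sections-on-energy}), and the $R$-boxes are thin in the unstable direction at the start and thin in the stable direction at the end, exactly the shape in Figure \ref{fig:lemma-3}. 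Thus each of the four arrows in \eqref{eq:R-Q-coverings} is itself a genuine covering relation of the intermediate boxes with the correct exit-set assignment, and the last arrow splits into the two alternatives $\mathcal{R}_{4k+3}\overset{P_0}{\implies}\mathcal{R}_{4(k+1)}$ and $\mathcal{R}_{4k+3}\overset{P_0}{\implies}\mathcal{Q}_{4(k+1)}$ exactly as \eqref{eq:RR-G-implies} and \eqref{eq:RQ-G-implies} give $R_k\xRightarrow{g}R_{k+1}$ and $R_k\xRightarrow{g}Q_{k+1}$ after noting that the definitions \eqref{eq:Rk-Qk-def} match $R_{k+1} = \eta_L R_{\beta^k a,(c+\rho)^k b}$, $Q_{k+1}=\eta_L Q_{\beta^k a,(c+\rho)^k b}$.

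The main obstacle, and the part that cannot be waved away, is the bookkeeping of exit sets along the four-step composition: the boxes $R_{4k},\dots,R_{4k+3}$ are defined in the coordinates of the composed map $g$ on $\Sigma_0$, but in the statement they sit on four \emph{different} sections $\Sigma_0,\Sigma_1,\Sigma_2,\Sigma_3$ with four different parametrizations. I would handle this by observing that the single-step Poincaré maps $P_1,P_2,P_3,P_0$ in local coordinates are, by the validations already performed in Theorem \ref{thm:covering_relations_1_3}, close-to-diagonal with the first coordinate strictly expanding and the second strictly contracting; hence pushing a thin horizontal strip $R_{4k}\subset\Sigma_0$ forward one step lands it inside the corresponding thin strip $R_{4k+1}\subset\Sigma_1$, and so on around the loop --- this is what makes the intermediate coverings hold, and the explicit constants $\beta,c+\rho$ are precisely the per-loop (not per-step) contraction/expansion rates bounding the four-fold composite. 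In writing this up I would either (i) verify the four intermediate one-step covering relations directly in CAPD as part of the proof of Theorem \ref{thm:covering_relations_1_3}'s computations (replacing the $\mathcal{N}_k$-boxes by the $\mathcal{R}_k$-boxes), or (ii) invoke a lemma that a composition of well-aligned section-to-section maps composes covering relations, applied to the verified bound on $Dg$. Either way, the substantive mathematical input is already in hand (Lemma \ref{lem:gk-approach} plus the derivative bounds on $g$), and what remains is this careful matching of h-sets across sections.
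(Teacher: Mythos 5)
Your primary route --- passing to the composite return map $g=(\psi_0)^{-1}\circ P_0\circ P_3\circ P_2\circ P_1\circ\psi_0$ on $\Sigma_0$ and applying Lemma \ref{lem:gk-approach} to it, with $\beta$ and $c+\rho$ read as \emph{per-loop} rates --- does not prove the statement. The lemma asserts coverings under the \emph{single-step} Poincar\'e maps $P_1,P_2,P_3,P_0$ between h-sets on four different sections, and by (\ref{eq:Rk-Qk-def}) the boxes $R_m,Q_m$ shrink by the factors $\beta$ and $c+\rho$ with every unit increment of the index, i.e.\ per section crossing, not per loop of four. A covering relation for the composite on $\Sigma_0$ yields no information about the intermediate coverings $\mathcal{R}_{4k}\xRightarrow{P_1}\mathcal{R}_{4k+1}$, etc., and those intermediate h-sets are not a formality: $S\mathcal{R}_1$ enters the gluing relation (\ref{eq:gluing}) and all the $\mathcal{R}_m$ appear in the chains (\ref{eq:half-seq-1})--(\ref{eq:half-seq-2}). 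Also, your parenthetical that the $R_m$ ``are defined in the coordinates of the composed map on $\Sigma_0$'' is not how the paper sets things up: they are fixed subsets of $\mathbb{R}^2$ placed on $\Sigma_{m\bmod 4}$ via $\psi_{m\bmod 4}$.

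The paper's actual argument, which your write-up only gestures at in passing, is to introduce the four one-step local maps $g_0=(\psi_0)^{-1}\circ P_0\circ\psi_3$ and $g_i=(\psi_i)^{-1}\circ P_i\circ\psi_{i-1}$ for $i=1,2,3$, each fixing the origin by (\ref{eq:points-on-orbit}), and to verify with interval arithmetic that \emph{each} of them (after conjugation by $\eta_L$) satisfies the derivative bounds (\ref{eq:der_cond_1})--(\ref{eq:der_cond_2}) with the \emph{same} constants $\alpha,\beta,c,\rho$; then Lemma \ref{lem:gk-approach} applied to each $g_i$ gives $R_m\xRightarrow{g_{(m+1)\bmod 4}}R_{m+1}$ and $R_m\xRightarrow{g_{(m+1)\bmod 4}}Q_{m+1}$ for every $m$, which is exactly (\ref{eq:R-Q-coverings}). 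Your fallback (i) --- checking the coverings $\mathcal{R}_m\Rightarrow\mathcal{R}_{m+1}$ directly in CAPD --- cannot close the gap either, since the statement concerns all $k\in\mathbb{N}$: there are infinitely many, ever-shrinking boxes accumulating on the fixed point, and it is precisely the analytic Lemmas \ref{lem:cov_rel_3} and \ref{lem:gk-approach}, fed with one derivative enclosure per map over the whole box $N_c$, that turn a finite computation into this infinite family of coverings. So the substantive missing step is: the interval-arithmetic check must be of the derivative bounds for the four one-step maps separately (per-step constants), not of the composite return map.
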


\begin{proof}
For $i=0,\ldots,3$ we define $g_{i}:\mathbb{R}^{2}\rightarrow
\mathbb{R}^{2}$ as%
\[
g_{0}:=\left(  \psi_{0}\right)  ^{-1}\circ P_{0}\circ\psi_{3}\qquad
\text{and\qquad}g_{i}:=\left(  \psi_{i}\right)  ^{-1}\circ P_{0}\circ
\psi_{i-1}\qquad\text{for }i=1,2,3.
\]
Note that due to (\ref{eq:points-on-orbit}) we have $g_{i}\left(  0\right)
=0$ for $i=0,\ldots,3$. We have validated with interval arithmetic computations that the
maps $g_{0},\ldots, g_{3}$ satisfy the assumptions of Lemma
\ref{lem:gk-approach}, which implies (\ref{eq:R-Q-coverings}). This concludes the proof.
\end{proof}
Below, in addition to the Poincar\'e maps for the symmetric sections introduced in (\ref{eq:PS-maps}) we consider also $P^S_0:\Sigma_0=S\Sigma_0 \to S\Sigma_{3}$.
\begin{corollary}
From Lemmas \ref{lem:S_backsymmetry} and \ref{lem:R-Q-coverings} it follows
that
\begin{equation}%
\begin{array}
[c]{rcccccccl}%
S\mathcal{R}_{4\left(  k+1\right)  } & \overset{P_{0}^{S}}{\Longleftarrow} &
S\mathcal{R}_{4k+3} & \overset{P_{3}^{S}}{\Longleftarrow} & S\mathcal{R}_{4k+2} & \overset{P_{2}^{S}}{\Longleftarrow} & S\mathcal{R}_{4k+1} &
\overset{P_{1}^{S}}{\Longleftarrow} & S\mathcal{R}_{4k}\\
S\mathcal{Q}_{4\left(  k+1\right)  } & \overset{P_{0}^{S}}{\Longleftarrow} & S\mathcal{R}_{4k+3} & & & & & & \medskip \\
S\Sigma_{0}=\Sigma_{0} &  & S\Sigma_{3} &  &S\Sigma_{2}   & &S\Sigma_{1}  &  & S\Sigma_{0}=\Sigma_{0}%
\end{array}
\label{eq:R-Q-back-sequence}%
\end{equation}
We emphasize that since $\mathcal{Q}_{4\left(  k+1\right)  }$ is self
$S$-symmetric, by Lemma \ref{lem:self-symmetric-covering} we also have
\[
\begin{array}
[c]{ccc}%
\mathcal{Q}_{4\left(  k+1\right)  } & \overset{P_{0}^{S}}{\Longleftarrow} &
S\mathcal{R}_{4k+3}.\\
\Sigma_{0} &  & S\Sigma_{3}%
\end{array}
\]
\end{corollary}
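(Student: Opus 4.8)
The plan is to obtain (\ref{eq:R-Q-back-sequence}) by feeding each of the five forward covering relations collected in (\ref{eq:R-Q-coverings}) through Lemma~\ref{lem:S_backsymmetry}, and then to derive the final displayed backcovering into $\mathcal{Q}_{4(k+1)}$ from Lemma~\ref{lem:self-symmetric-covering}. Before starting I would record that the hypotheses of Lemma~\ref{lem:S_backsymmetry} are in force: the regularized Hamiltonian $\Gamma_{2,h_0}$ inherits the $S$-symmetry of the PCR3BP, i.e. $\Gamma_{2,h_0}\circ S=\Gamma_{2,h_0}$ and $\nabla\Gamma_{2,h_0}\circ S=-S\circ\nabla\Gamma_{2,h_0}$ (equivalently, the time-reversal identity $S\circ\Phi^{2,h_0}_s=\Phi^{2,h_0}_{-s}\circ S$ noted in Section~\ref{subsec:PCR3BP}), and the sections used carry the parametrizations $\psi_{k\bmod 4}$ as in the definition of $\mathcal{R}_k$ and $\mathcal{Q}_k$.

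Then I would go term by term along (\ref{eq:R-Q-coverings}). Applying Lemma~\ref{lem:S_backsymmetry} to $\mathcal{R}_{4k}\xRightarrow{P_1}\mathcal{R}_{4k+1}$, with $P_1:\Sigma_0\to\Sigma_1$ and the companion map $P_1^S:S\Sigma_1\to S\Sigma_0$ from (\ref{eq:PS-maps}), produces $S\mathcal{R}_{4k+1}\xLeftarrow{P_1^S}S\mathcal{R}_{4k}$; likewise $\mathcal{R}_{4k+1}\xRightarrow{P_2}\mathcal{R}_{4k+2}$ yields $S\mathcal{R}_{4k+2}\xLeftarrow{P_2^S}S\mathcal{R}_{4k+1}$ and $\mathcal{R}_{4k+2}\xRightarrow{P_3}\mathcal{R}_{4k+3}$ yields $S\mathcal{R}_{4k+3}\xLeftarrow{P_3^S}S\mathcal{R}_{4k+2}$. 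Running the same lemma on the two coverings leaving $\mathcal{R}_{4k+3}$ under $P_0:\Sigma_3\to\Sigma_0$, now with the auxiliary companion map $P_0^S:\Sigma_0=S\Sigma_0\to S\Sigma_3$, gives $S\mathcal{R}_{4(k+1)}\xLeftarrow{P_0^S}S\mathcal{R}_{4k+3}$ and $S\mathcal{Q}_{4(k+1)}\xLeftarrow{P_0^S}S\mathcal{R}_{4k+3}$. Concatenating the first four of these backcoverings is exactly the top row of (\ref{eq:R-Q-back-sequence}), the last one is its second row, and the bottom row merely labels the sections $S\Sigma_j$ (with $S\Sigma_0=\Sigma_0$) on which the rescaled h-sets $S\mathcal{R}_m=(R_m^T,S\psi_{m\bmod 4})$ are positioned.

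For the final displayed relation I would use that $\mathcal{Q}_{4(k+1)}$ is self $S$-symmetric: it is a square rescaled by $\eta_L$ and centred on the diagonal $z_1=z_2$ inside $\mathcal{N}_0$, hence of the form treated in Lemma~\ref{lem:Q-self-S-symmetric}, exactly as already observed after (\ref{eq:R-Q-coverings}). For such an h-set a covering or backcovering relation involving $S\mathcal{Q}_{4(k+1)}$ is equivalent to the same relation with $\mathcal{Q}_{4(k+1)}$ in its place — the proof of Lemma~\ref{lem:self-symmetric-covering} shows that the two relations involve the identical map $f_c$ of Definition~\ref{def:covering-relation} — so Lemma~\ref{lem:self-symmetric-covering} converts $S\mathcal{Q}_{4(k+1)}\xLeftarrow{P_0^S}S\mathcal{R}_{4k+3}$ into $\mathcal{Q}_{4(k+1)}\xLeftarrow{P_0^S}S\mathcal{R}_{4k+3}$, as claimed.

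The step I expect to demand the most attention is purely organizational rather than analytic: matching each forward covering with the correctly indexed companion Poincar\'e map ($P_j^S$ for $j=1,2,3$ together with the extra $P_0^S$) and the correctly identified symmetric section, keeping track of the identifications $S\Sigma_0=\Sigma_0$, and making sure Lemma~\ref{lem:self-symmetric-covering} is being invoked in the direction that strips the $S$ off $\mathcal{Q}$ rather than adds it. Beyond Lemmas~\ref{lem:S_backsymmetry} and \ref{lem:self-symmetric-covering} there is no additional content to supply.
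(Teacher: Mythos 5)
Your proposal is correct and follows exactly the route the paper intends (the corollary is stated without a separate proof precisely because it is this deduction): apply Lemma \ref{lem:S_backsymmetry} to each of the five coverings in (\ref{eq:R-Q-coverings}) with the companion maps $P_1^S,P_2^S,P_3^S,P_0^S$ to get (\ref{eq:R-Q-back-sequence}), then use the self $S$-symmetry of $\mathcal{Q}_{4(k+1)}$ (Lemma \ref{lem:Q-self-S-symmetric}) together with Lemma \ref{lem:self-symmetric-covering} to replace $S\mathcal{Q}_{4(k+1)}$ by $\mathcal{Q}_{4(k+1)}$. Your remark that the relations for $S\mathcal{Q}$ and $\mathcal{Q}$ involve the identical map $f_c$ correctly handles the direction in which Lemma \ref{lem:self-symmetric-covering} is stated, so no gap remains.
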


We also have the following covering relation:

\begin{lemma}
For the h-set $\mathcal{N}_{4}$ from (\ref{eq:covering_sequence_o1}), the following back-covering is valid
\begin{equation}
    \label{eq:gluing}
    \begin{array}[c]{rcc}
        S\mathcal{R}_{1} & \overset{P_{4}}{\Longleftarrow} & \mathcal{N}_{4}.\\
        S\Sigma_{1}=\Sigma_{3} &  & \Sigma_{4}%
    \end{array}
\end{equation}
\end{lemma}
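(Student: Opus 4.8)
The plan is to prove (\ref{eq:gluing}) by a direct rigorous interval computation with CAPD, in the same spirit as the validations behind Theorem \ref{thm:covering_relations_1_3} and Lemma \ref{lem:R-Q-coverings}, after first unwinding the back-covering into a checkable form. Recall that $S\mathcal{R}_1=(R_1^{T},S\psi_1)$ lives on $S\Sigma_1=\Sigma_3$, that $\mathcal{N}_4=(N_c,\psi_4)$ lives on $\Sigma_4$, and that $R_1=\eta_L\big([0,b]\times[a,b]\big)$ is the explicit quadrangle built from the chosen constants $a=1/256$, $b=255/256$, $L=0.0039$, with $R_1\subsetneq N_c$. Writing $g:=\psi_4^{-1}\circ P_4\circ (S\psi_1):\mathbb{R}^2\to\mathbb{R}^2$ for the section-to-section map from $\Sigma_3$ to $\Sigma_4$ in these charts, the relation $S\mathcal{R}_1\xLeftarrow{P_4}\mathcal{N}_4$ means, by Definition \ref{def:back-covering-relation}, precisely the forward covering $N_c^{T}\xRightarrow{\,g^{-1}}R_1$ of h-sets in $\mathbb{R}^2$. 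Hence it suffices to verify the contraction and the two expansion conditions (\ref{eq:covering_relation_conditions}) for the local-to-local map $c_{R_1}^{-1}\circ g^{-1}\circ c_{N_c^{T}}$ on $N_c=[-1,1]^2$.

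Concretely I would proceed as follows. Since $g^{-1}=(S\psi_1)^{-1}\circ (P_4)^{-1}\circ\psi_4$, evaluating $g^{-1}$ amounts to flowing the regularized system $\Phi^{2,h_0}_s$ backward in time from $\Sigma_4$ to $\Sigma_3$; by the time-reversing symmetry $S\circ\Phi^{2,h_0}_s=\Phi^{2,h_0}_{-s}\circ S$ this may equivalently be realised as a forward integration of the $S$-conjugate flow, which is how one sets it up in CAPD. The charts $\psi_4$ and $S\psi_1$, and thus their inverses, are computable in interval arithmetic via the interval Newton method exactly as in Remark \ref{rem:E-from-Interval-Newton} and Lemma \ref{lem:inverse-psi}. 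One then computes an interval enclosure of $g^{-1}(N_c^{T})$ together with enclosures of the images of the two edges $c_{N_c^{T}}(\{-1\}\times B)$ and $c_{N_c^{T}}(\{+1\}\times B)$, pulls these back into the chart $c_{R_1}$ of $R_1$, and checks that the second coordinate stays in $B=[-1,1]$ while the first coordinate on the two edges overshoots $-1$ and $+1$, respectively; these are exactly conditions (\ref{eq:covering_relation_conditions}).

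The reason to expect this to succeed is structural: the backward iterate of $P_4$ expands the stable direction at $w_3$, and by the choice $\hat s_3=S\hat u_1$ in (\ref{eq:s1_u3_S_backsymmetry}) this stable direction is exactly the first (exit) axis of the chart $S\psi_1$, so the long, thin quadrangle $R_1$, which is stretched along that axis, is the natural target of $g^{-1}(N_c^{T})$; and since $R_1\subsetneq N_c$ there is a definite margin for the overshoot demanded by (\ref{eq:covering_relation_conditions}). The main obstacle is quantitative rather than conceptual: $P_4$ is the transition map from the last Poincar\'e section on the ejection/collision orbit to the first section of the homoclinic excursion, so $(P_4)^{-1}$ is far from a linear saddle, and one needs interval enclosures of $(P_4)^{-1}$ over all of $N_c$ tight enough to see that $g^{-1}$ maps $N_c^{T}$ across the relatively thin set $R_1$ with both endpoints strictly outside $B$. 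As in Lemma \ref{lem:gk-approach}, any near-tangency of the invariant manifolds to the coordinate axes is absorbed by the rescaling $\eta_L$ present in the definition of $R_1$, so placing $R_1$ as above is consistent with the transversality (\ref{eq:der_cond_1}--\ref{eq:der_cond_2}) used in section \ref{sec:cover-fix-pt}; what remains is the interval-arithmetic propagation, carried out in the accompanying code \cite{pcr3bp_code}.
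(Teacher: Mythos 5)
Your proposal is correct and matches the paper's approach: the paper establishes (\ref{eq:gluing}) precisely by a direct rigorous interval-arithmetic validation of the covering conditions with CAPD, which is what you describe after (correctly) unwinding the back-covering into the forward covering $N_c^{T}\xRightarrow{g^{-1}}R_1$ for the chart-level map. The extra structural remarks about $\hat s_3=S\hat u_1$ and the rescaling $\eta_L$ are consistent with the paper's setup but are not needed beyond the computer-assisted check.
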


\begin{proof}
    We have validated this covering by interval arithmetic computations using CAPD.
\end{proof}

\begin{corollary}
From (\ref{eq:covering_sequence_o1}), by taking a fragment of the sequence, we see that we have
\begin{equation}%
\begin{array}
[c]{ccccccc}%
\mathcal{N}_{4} & \overset{P_{5}}{\implies} & \mathcal{N}_{5} & \overset
{P_{6}}{\implies} & \ldots & \overset{P_{K}}{\implies} & \mathcal{N}_{K}.\\
\Sigma_{4} &  & \Sigma_{5} &  &  &  & \Sigma_{K}%
\end{array}
\label{eq:from-N4-seq}%
\end{equation}
We can join the sequences (\ref{eq:R-Q-back-sequence}) and (\ref{eq:from-N4-seq}), by placing (\ref{eq:gluing}) between them to glue them, to obtain (here we neglect to write out the Poincer\'e maps above the arrows to avoid clutter)
\begin{equation}%
\begin{array}
[c]{ccccccccccccccc}%
\mathcal{Q}_{4k} &\Leftarrow & S\mathcal{R}_{4k-1} & \Leftarrow & \ldots &
\Leftarrow & S\mathcal{R}_{1} & \Leftarrow & \mathcal{N}_{4} & \Rightarrow &
\mathcal{N}_{5} & \Rightarrow & \ldots & \Rightarrow & \mathcal{N}_{K}, \\ 
\Sigma_{0} &  & S\Sigma_{3} &  &  &  & \Sigma_{3} &  & \Sigma_{4}
&  & \Sigma_{5} &  &  &  & \Sigma_{K}
\end{array}
\label{eq:half-seq-1}%
\end{equation}
(recall that $S\Sigma_{1}=\Sigma_{3}$, which is why below $S\mathcal{R}_{1}$ we have $\Sigma_{3}$)
and by the $S$-symmetry of the system and since $N_{K}$ and $Q_{4k}$ are self $S$-symmetric, from (\ref{eq:half-seq-1}) we obtain
\begin{equation}%
\begin{array}
[c]{ccccccccccccccc}%
\mathcal{N}_{K} & \Leftarrow & S\mathcal{N}_{K-1} & \Leftarrow & \ldots &
\Leftarrow & S\mathcal{N}_{4} & \Rightarrow & \mathcal{R}_{1} & \Rightarrow &
\mathcal{R}_{2} & \Rightarrow & \ldots & \Rightarrow & \mathcal{Q}_{4k}.\\
\Sigma_{K} &  & S\Sigma_{K-1} &  &  &  & S\Sigma_{4} &  & \Sigma_{1} &  &
\Sigma_{2} &  &  &  & \Sigma_{0}%
\end{array}
\label{eq:half-seq-2}%
\end{equation}
\end{corollary}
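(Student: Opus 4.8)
The corollary carries no new analytic content; every covering relation it needs is already established, so the plan is simply to splice the available chains together while tracking indices, the underlying Poincar\'e sections, and the direction of each arrow. First, (\ref{eq:from-N4-seq}) is immediate: a connected sub-chain of a valid chain of covering relations is again a valid chain, so deleting $\mathcal{N}_0\Rightarrow\mathcal{N}_1\Rightarrow\mathcal{N}_2\Leftarrow\mathcal{N}_3$ from the front of (\ref{eq:covering_sequence_o1}) leaves exactly $\mathcal{N}_4\Rightarrow\mathcal{N}_5\Rightarrow\cdots\Rightarrow\mathcal{N}_K$.

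Next I would assemble the left half of (\ref{eq:half-seq-1}). Reading each back-covering block (\ref{eq:R-Q-back-sequence}) in the order $N_0,N_1,\dots$ used in Theorem \ref{thm:generic_covering_relations}, the block with parameter $j$ runs from $S\mathcal{R}_{4(j+1)}$ down to $S\mathcal{R}_{4j}$, so the bottom h-set of one block is literally the top h-set of the block with parameter $j-1$ (same support, same parametrization, same section) and consecutive blocks concatenate. Running $j$ from $k-1$ downward, and including the low-index back-coverings $S\mathcal{R}_{2}\Leftarrow S\mathcal{R}_{1}$, $S\mathcal{R}_{3}\Leftarrow S\mathcal{R}_{2}$ obtained by reflecting the corresponding forward coverings of Lemma \ref{lem:gk-approach} (which hold at every index $\geq 1$, even though $\mathcal{R}_0$ is not defined), yields one back-covering chain terminating at $S\mathcal{R}_1$. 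Its top node is capped with $\mathcal{Q}_{4k}$ by the $\mathcal{Q}$-variant $\mathcal{Q}_{4k}\xLeftarrow{P_0^S}S\mathcal{R}_{4k-1}$ from (\ref{eq:R-Q-back-sequence}); this is legitimate because $\mathcal{Q}_{4k}$ is self $S$-symmetric (Lemma \ref{lem:Q-self-S-symmetric}), so by Lemma \ref{lem:self-symmetric-covering} one may use $\mathcal{Q}_{4k}$ in place of $S\mathcal{Q}_{4k}$. Appending the single back-covering (\ref{eq:gluing}), $S\mathcal{R}_1\xLeftarrow{P_4}\mathcal{N}_4$, and then the forward chain (\ref{eq:from-N4-seq}) gives precisely (\ref{eq:half-seq-1}); the section labels underneath are read off mechanically from $\mathcal{R}_k=(R_k,\psi_{k\bmod 4})$, $\mathcal{Q}_k=(Q_k,\psi_{k\bmod 4})$, from (\ref{eq:PS-maps}), and from $S\Sigma_0=\Sigma_0$ and $S\Sigma_1=\Sigma_3$.

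Finally, (\ref{eq:half-seq-2}) follows from (\ref{eq:half-seq-1}) by one pass of the symmetry machinery. Applying Lemma \ref{lem:S_backsymmetry} term by term to (\ref{eq:half-seq-1}) reverses the order of the h-sets, replaces each $\mathcal{M}$ by $S\mathcal{M}$ together with the associated maps $P^S$, and turns every $\Rightarrow$ into a $\Leftarrow$ and vice versa. Since $S\circ S=\mathrm{id}$ the doubly reflected sets $S(S\mathcal{R}_j)$ collapse to $\mathcal{R}_j$, and since the endpoints $\mathcal{N}_K$ and $\mathcal{Q}_{4k}$ are self $S$-symmetric (the remark following Theorem \ref{thm:covering_relations_1_3} and Lemma \ref{lem:Q-self-S-symmetric}), Lemma \ref{lem:self-symmetric-covering} lets us write $\mathcal{N}_K$ for $S\mathcal{N}_K$ at the head and $\mathcal{Q}_{4k}$ for $S\mathcal{Q}_{4k}$ at the tail. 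What remains is exactly (\ref{eq:half-seq-2}).

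The whole argument is bookkeeping, so the hard part is not analytic but combinatorial: I would double-check the junction in (\ref{eq:half-seq-1}), namely that $S\mathcal{R}_1$ really sits on $\Sigma_3$ (via $S\Sigma_1=\Sigma_3$) so that (\ref{eq:gluing}) can be inserted between the $S\mathcal{R}$-chain and the $\mathcal{N}_4$-chain, and that after reflection every arrow in (\ref{eq:half-seq-2}) points in the direction dictated by Lemma \ref{lem:S_backsymmetry}.
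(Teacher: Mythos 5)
Your proposal is correct and follows essentially the same route as the paper, which justifies the corollary exactly as you do: extract the fragment of (\ref{eq:covering_sequence_o1}), chain the back-coverings of (\ref{eq:R-Q-back-sequence}) capped by the self-$S$-symmetric $\mathcal{Q}_{4k}$, glue via (\ref{eq:gluing}), and reflect the whole chain using Lemmas \ref{lem:S_backsymmetry} and \ref{lem:self-symmetric-covering} together with the self-$S$-symmetry of $\mathcal{N}_K$ and $\mathcal{Q}_{4k}$. Your only slip is in the explicit list of low-index back-coverings (you omit $S\mathcal{R}_4 \Leftarrow S\mathcal{R}_3$), but your parenthetical remark that the consecutive coverings of Lemma \ref{lem:gk-approach} hold at every index $\geq 1$ already covers it.
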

By joining (\ref{eq:half-seq-1}) with (\ref{eq:half-seq-2}) we obtain the sequence%
\begin{equation}%
\begin{array}
[c]{ccccccccc}%
\mathcal{N}_{K} & \Leftarrow & \ldots & \Rightarrow & \mathcal{Q}_{4k} &
 \Leftarrow & \ldots & \Rightarrow & \mathcal{N}_{K}.\\
\Sigma_{K} &  &  &  & \Sigma_{0} &  &  &  & \Sigma_{K}%
\end{array}
\label{eq:full-sequence-long}%
\end{equation}
Instead of writing out the sequence (\ref{eq:full-sequence-long}) we will use the following short hand notation
\[
    \mathcal{N}_{K}\overset{k}{\longrightarrow}\mathcal{N}_{K}.
\]
We are ready to prove Theorem \ref{thm:primary_result}. 

\begin{proof}[Proof of Theorem \ref{thm:primary_result}]
From Lemmas \ref{lem:collision_avoidance_1} and \ref{lem:collision_avoidance_3} we know that trajectories that shadow the sequences of coverings $\mathcal{N}_{K}\overset{k}{\longrightarrow}\mathcal{N}_{K}$ (which is the short hand notation for (\ref{eq:full-sequence-long})) can not collide with Earth. Moreover, from our construction the larger the $k$, the closer is the set $\psi_0(\mathcal{Q}_{4k})$ to a collision with Earth.  

We are ready to prove (\ref{eq:main}) for $X,Y\in \{Oc,A\}$.

We obtain (\ref{eq:main}) for $X=Y=Oc$ from the following sequence of covering relations
\[
    \cdots \overset{3}{\longrightarrow}
    \mathcal{N}_K\overset{2}{\longrightarrow}
    \mathcal{N}_K\overset{1}{\longrightarrow}
    \mathcal{N}_K\overset{2}{\longrightarrow}
    \mathcal{N}_K\overset{3}{\longrightarrow}\cdots .
\]
This is because by Theorem \ref{thm:generic_covering_relations} we can find a trajectory which shadows this infinite sequence of coverings.

Similarly, when $X=Oc$ and $Y=A$, the motions (\ref{eq:main}) follow from
\[
    \cdots \overset{3}{\longrightarrow}
    \mathcal{N}_K\overset{2}{\longrightarrow}
    \mathcal{N}_K\overset{1}{\longrightarrow}
    \mathcal{N}_K\overset{1}{\longrightarrow}
    \mathcal{N}_K\overset{1}{\longrightarrow}\cdots,
\]
and for $X=A$ and $Y=Oc$ we can use
\[
    \cdots \overset{1}{\longrightarrow}
    \mathcal{N}_K\overset{1}{\longrightarrow}
    \mathcal{N}_K\overset{1}{\longrightarrow}
    \mathcal{N}_K\overset{2}{\longrightarrow}
    \mathcal{N}_K\overset{3}{\longrightarrow}\cdots.
\]
The result for $X=Y=A$ follows for instance from
\[
    \cdots \overset{1}{\longrightarrow}
    \mathcal{N}_K\overset{1}{\longrightarrow}
    \mathcal{N}_K\overset{1}{\longrightarrow}
    \mathcal{N}_K\overset{1}{\longrightarrow}
    \mathcal{N}_K\overset{1}{\longrightarrow}\cdots,
\]
though here the statement is trivial since any solution away from collision would suffice; for instance a libration fixed point of the PCR3BP.
 
We now move to the cases involving collisions. 

The case for $X=Y=C$ follows from the existence of the ejection/collision orbit established in Theorem \ref{thm:lyapunov_orbit_energy}.
    
To link collisions with other types of motions let us introduce the following notation for the sequences of covering relations (\ref{eq:covering_sequence_o1}) and (\ref{eq:covering_sequence_o2}). Instead of writing out (\ref{eq:covering_sequence_o1}) and (\ref{eq:covering_sequence_o2}) we shall write
\[
    \mathcal{N}_{0}\underset{c}{\longrightarrow}\mathcal{N}_{K}\qquad\mbox{and}\qquad \mathcal{N}_{K}\underset{c}{\longrightarrow}\mathcal{N}_{0},
\]
respectively. (Here `c' stands for `collision'.)
Let us also observe that by Lemma \ref{lem:collision-in-psi0} the collision curve when intersected with $\mathcal{N}_0$ in the local coordinates given by $\psi_0$, constitutes both a horizontal as well as a vertical disc (in the sense of Definitions \ref{def:horizontal-disc} and \ref{def:vertical-disc}). See Figure \ref{fig:overview2}: since the collision line is diagonal it is both a horizontal  as well as a vertical disc.

From Theorem \ref{thm:cover-discs} it follows that every horizontal disc contains a point, whose forward trajectory shadows an infinite sequence of covering relations, which starts from the h-set containing the horizontal disc. Also, from Theorem \ref{thm:cover-discs}, it follows that for every vertical disc there exists a point, whose backward trajectory shadows an infinite sequence of coverings, which ends at the h-set containing the vertical disc. Thus, the case $X=A$ and $Y=C$ follows from
 \[
    \cdots \overset{1}{\longrightarrow}
    \mathcal{N}_K\overset{1}{\longrightarrow}
    \mathcal{N}_K\overset{1}{\longrightarrow}
    \mathcal{N}_K\underset{c}{\longrightarrow}
    \mathcal{N}_0,
\]
and the case $X=C$ and $Y=A$ follows from
 \[
    \mathcal{N}_0\underset{c}{\longrightarrow}
    \mathcal{N}_K\overset{1}{\longrightarrow}
    \mathcal{N}_K\overset{1}{\longrightarrow}
    \mathcal{N} \overset{1}{\longrightarrow} \cdots.
\]
Similarly,
\[
	\cdots \overset{3}{\longrightarrow} 
	\mathcal{N}_K\overset{2}{\longrightarrow} 
	\mathcal{N}_K\overset{1}{\longrightarrow}
    \mathcal{N}_K\underset{c}{\longrightarrow}
    \mathcal{N}_0
\]
implies $X=Os$ and $Y=C$, and 
\[
    \mathcal{N}_0\underset{c}{\longrightarrow}
    \mathcal{N}_K\overset{1}{\longrightarrow}
    \mathcal{N}_K\overset{2}{\longrightarrow}
    \mathcal{N} \overset{3}{\longrightarrow} \cdots.
\]
implies $X=C$ and $Y=Os$.
 
We have established all the motions (\ref{eq:main}) from Theorem \ref{thm:primary_result}. 
 
We finish by observing that the periodic orbits from the final assertion of the theorem follow from the fact that we have
\[
    \mathcal{N}_K\overset{n}{\longrightarrow} \mathcal{N}_K
\]
for arbitrarily large $n\in \mathbb{N}$.
This concludes our proof.
\end{proof}

\section{Conclusions}
We have shown an explicit construction, which leads the connections between the forward and backward time motions exhibiting: a collision, oscillatory motion to collision, motion away from collision, in the Earth-Moon planar circular restricted three body problem. The construction was based on exploiting the homoclinic connections for the family of Lyapunov orbits, which persist in the Levi-Civita coordinates for the ejection/collision orbit. The construction is not dependent on the specific mass parameter of the Earth-Moon system. We use this system as an example, but the method is applicable to other mass parameters, provided that the Lyapunov family collides with one of the primaries.
We believe that our construction can be matched with the one from \cite{MR4391693}, which involves the hyperbolic, parabolic and oscillatory motions to infinity. The trajectories for the motions in \cite{MR4391693} are constructed so that they enter the collision domains of the primaries. We believe that the construction from the current paper and from \cite{MR4391693}  can be combined. This is the subject of ongoing work.

\section*{Acknowledgments}
\noindent We would like to thank Jason Mireles-James and Piotr Zgliczy\'nski for helpful discussions.

\appendix

\section{Appendix}

\subsection{Coordinate systems}
\label{appendix:coordinate_systems}

Let $\epsilon:= 8.5 \cdot 10^{-10}$. This coefficient will play the role of a rescaling factor for our coordinate changes. Its role is to have the h-sets $\mathcal{N}_0,\ldots, \mathcal{N}_{18}$ in the local coordinates given by $\psi_0,\ldots, \psi_{18}$, respectively, to be of the same size and of the form $N_c=[-1,1]^2\subset \mathbb{R}^2$. In short, the h-sets in the local coordinates will be of order one, but their size in the coordinates of the PCR3BP will be of order $10^{-10}$.
 
Let $\mathcal{U}$ denote the vector normalization i.e. $\mathcal{U}(w) = w / \| w \|$.  We choose the matrices $A_k\in\mathbb{R}^{4\times4}$ for the definition of the coordinate changes $\psi_k$ (see (\ref{eq:A-form}), (\ref{eq:Lambda-def}) and (\ref{eq:local-parametrization-choice})) to be
\begin{equation}
    A_k:= \, \left[
        \epsilon\,\hat{u}_k \quad
        \epsilon\,\hat{s}_k \quad
        \epsilon\,\mathcal{U} \left( J\nabla \Gamma(w_k) \right) \quad
        \epsilon\,\mathcal{U} \left( \nabla \Gamma(w_k) \right) \right], \qquad \mbox{for }k=1,\ldots,18, \label{eq:Ak-choice}
\end{equation}
where the vectors $w_k$, $\hat{u}_k$ and $\hat{s}_k$ are chosen as written out in Tables \ref{table:wk}, \ref{table:uk} and \ref{table:sk}, respectively. Note that the choice of $A_k$ in (\ref{eq:Ak-choice}) is of the form (\ref{eq:A-form}); with a rescaling of the columns. We use the matrices $A_k$ from (\ref{eq:Ak-choice}) to define $\psi_k$ using (\ref{eq:local-parametrization-choice}), for $k=1,\ldots,18$.

\begin{table}[H]
    \centering
    \scriptsize
    \begin{tabular}{lllll}
        \hline
         & $u$ & $\phantom{-}v$ & $\phantom{-}p_u$ & $\phantom{-}p_v$ \\
        \hline
        $w_{0}$ & $\phantom{-}0$ & $\phantom{-}0$ & $\phantom{-}0$ & $\phantom{-}2.81112771399$ \\
        $w_{1}$ & $\phantom{-}1.01939532911$ & $\phantom{-}0.0352455355111$ & $-1.57992203844$ & $-0.220188078268$ \\
        $w_{2}$ & $\phantom{-}0.997511358555$ & $\phantom{-}0$ & $\phantom{-}0$ & $-3.10035066234$ \\
        $w_{3}$ & $\phantom{-}1.01939532911$ & $-0.0352455355111$ & $\phantom{-}1.57992203844$ & $-0.220188078268$ \\
        $w_{4}$ & $\phantom{-}1.82311329298$e-10 & $\phantom{-}0$ & $\phantom{-}3.0389849377$e-09 & $\phantom{-}2.81112771399$ \\
        $w_{5}$ & $\phantom{-}1.01939364734$ & $\phantom{-}0.0352429461228$ & $-1.57993713742$ & $-0.220199770725$ \\
        $w_{6}$ & $\phantom{-}0.997511359537$ & $\phantom{-}0$ & $\phantom{-}3.81728719867$e-07 & $-3.1003514216$ \\
        $w_{7}$ & $\phantom{-}1.01939365573$ & $-0.0352429413345$ & $\phantom{-}1.57993769304$ & $-0.22019938229$ \\
        $w_{8}$ & $\phantom{-}1.23284045903$e-07 & $\phantom{-}0$ & $\phantom{-}2.05626985632$e-06 & $\phantom{-}2.81112771099$ \\
        $w_{9}$ & $\phantom{-}1.01939456403$ & $\phantom{-}0.0352429632775$ & $-1.57993456608$ & $-0.220196017823$ \\
        $w_{10}$ & $\phantom{-}0.997512022626$ & $\phantom{-}0$ & $\phantom{-}0.000258254271245$ & $-3.10086432733$ \\
        $w_{11}$ & $\phantom{-}1.01940023627$ & $-0.0352397234978$ & $\phantom{-}1.58031042535$ & $-0.219933213027$ \\
        $w_{12}$ & $\phantom{-}8.34102256631$e-05 & $\phantom{-}0$ & $\phantom{-}0.00139148280123$ & $\phantom{-}2.8111253283$ \\
        $w_{13}$ & $\phantom{-}1.02000949836$ & $\phantom{-}0.0352470440096$ & $-1.57821685002$ & $-0.21769836291$ \\
        $w_{14}$ & $\phantom{-}0.997948108217$ & $\phantom{-}0$ & $\phantom{-}0.196225509832$ & $-3.49133960833$ \\
        $w_{15}$ & $\phantom{-}1.023984317$ & $-0.0326239190489$ & $\phantom{-}1.83944484044$ & $-0.0123000584495$ \\
        $w_{16}$ & $\phantom{-}0.0828113236996$ & $-1.13042936454$ & $\phantom{-}0.0931363222648$ & $\phantom{-}1.32036952924$ \\
        $w_{17}$ & $\phantom{-}0.0742904549562$ & $\phantom{-}0$ & $\phantom{-}1.24626303185$ & $\phantom{-}2.51265379059$ \\
        $w_{18}$ & $\phantom{-}1.26583072872$ & $\phantom{-}0$ & $\phantom{-}0$ & $\phantom{-}0.120135068527$ \\
    \end{tabular}
    \caption{Approximate values of the components of the vectors $w_k$.\label{table:wk}}
\end{table}

For the coordinate change $\psi_0$ we choose the constants $d_1$ and $d_2$ from (\ref{eq:psi_0-def}) 
so that the map $\psi_0$ is well aligned with the system dynamics; we require that:
\begin{equation}
    \label{eq:why-d1-d2}
    \frac{\partial \psi_0}{\partial z_1} (0,0) \approx \epsilon\,\hat{u}_0, \quad
    \frac{\partial \psi_0}{\partial z_2} (0,0) \approx \epsilon\,\hat{s}_0,
\end{equation}
and the choice which leads to this is
\begin{equation}
    \label{eq:d1-d2-choice}
    d_1= \epsilon\cdot0.0598649594810129 \qquad \mbox{and}\qquad
    d_2 = \epsilon\cdot0.997908614890024. 
\end{equation}

\begin{table}[H]
    \scriptsize
    \centering
    \begin{tabular}{lllll}
        \hline
         & $\phantom{-}u$ & $\phantom{-}v$ & $\phantom{-}p_u$ & $\phantom{-}p_v$ \\
        \hline
        $\hat{u}_{0}$ & $\phantom{-}0.0598653990819$ & $\phantom{-}0.0243392345095$ & $\phantom{-}0.99790861489$ & $-0.00146013168443$ \\
        $\hat{u}_{1}$ & $\phantom{-}0.0512722628998$ & $-0.0539645641977$ & $-0.0790749932687$ & $\phantom{-}0.106664891545$ \\
        $\hat{u}_{2}$ & $\phantom{-}0.0123990872795$ & $\phantom{-}0.00623386910969$ & $\phantom{-}8.06085056107$e-06 & $-9.58885003297$ \\
        $\hat{u}_{3}$ & $\phantom{-}0.0655510635754$ & $-0.0519328405322$ & $\phantom{-}0.994851864788$ & $\phantom{-}1.26355518097$ \\
        $\hat{u}_{4}$ & $\phantom{-}0.0598653990819$ & $\phantom{-}0.0243392345095$ & $\phantom{-}0.99790861489$ & $-0.00146013168443$ \\
        $\hat{u}_{5}$ & $\phantom{-}0.0625757321266$ & $-0.0658560755461$ & $-0.0965279955578$ & $\phantom{-}0.13018905303$ \\
        $\hat{u}_{6}$ & $\phantom{-}0.0184674816754$ & $\phantom{-}0.00928486590201$ & $\phantom{-}1.20031980776$e-05 & $-14.2818597745$ \\
        $\hat{u}_{7}$ & $\phantom{-}0.119128113811$ & $-0.0943647041922$ & $\phantom{-}1.80852788038$ & $\phantom{-}2.2967467384$ \\
        $\hat{u}_{8}$ & $\phantom{-}0.132804359411$ & $\phantom{-}0.0539941945251$ & $\phantom{-}2.21374370854$ & $-0.0032407835347$ \\
        $\hat{u}_{9}$ & $\phantom{-}0.138815104952$ & $-0.146096387381$ & $-0.214131083267$ & $\phantom{-}0.288817842031$ \\
        $\hat{u}_{10}$ & $\phantom{-}0.0409636350452$ & $\phantom{-}0.0205951178031$ & $\phantom{-}2.24064629998$e-05 & $-31.6930288739$ \\
        $\hat{u}_{11}$ & $\phantom{-}0.264355726553$ & $-0.209336295197$ & $\phantom{-}4.01213882195$ & $\phantom{-}5.0966647523$ \\
        $\hat{u}_{12}$ & $\phantom{-}0.294792860317$ & $\phantom{-}0.120543706008$ & $\phantom{-}4.91486295091$ & $-0.00967322389548$ \\
        $\hat{u}_{13}$ & $\phantom{-}0.305475938325$ & $-0.327843230549$ & $-0.468071313644$ & $\phantom{-}0.654208911006$ \\
        $\hat{u}_{14}$ & $\phantom{-}0.0842120960402$ & $\phantom{-}0.0422016823499$ & $-0.00607062017782$ & $-89.0241271111$ \\
        $\hat{u}_{15}$ & $\phantom{-}0.731763394696$ & $-0.458286780463$ & $\phantom{-}8.89272504938$ & $\phantom{-}13.9801270312$ \\
        $\hat{u}_{16}$ & $-3.39472774076$ & $-2.60519046749$ & $\phantom{-}6.3830932378$ & $-4.93809685454$ \\
        $\hat{u}_{17}$ & $\phantom{-}0.504441646028$ & $\phantom{-}0.387143559261$ & $\phantom{-}4.32240508069$ & $-2.22878232827$ \\
        $\hat{u}_{18}$ & $\phantom{-}0.0497937735344$ & $-0.534053889757$ & $\phantom{-}0.326462982573$ & $\phantom{-}0.0814565811784$ \\
        \hline
    \end{tabular}
    \caption{Approximate values of the components of the vectors $\hat{u}_k$.\label{table:uk}}
\end{table}

\begin{table}[H]
	\scriptsize
    \centering
    \begin{tabular}{lllll}
        \hline
         & $\phantom{-}u$ & $\phantom{-}v$ & $\phantom{-}p_u$ & $\phantom{-}p_v$ \\
        \hline
        $\hat{s}_{0}$ & $\phantom{-}0.0598653990819$ & $-0.0243392345095$ & $-0.99790861489$ & $-0.00146013168443$ \\
        $\hat{s}_{1}$ & $\phantom{-}0.0655510635754$ & $\phantom{-}0.0519328405322$ & $-0.994851864788$ & $\phantom{-}1.26355518097$ \\
        $\hat{s}_{2}$ & $\phantom{-}0.0123990872795$ & $-0.00623386910969$ & $-8.06085056107$e-06 & $-9.58885003297$ \\
        $\hat{s}_{3}$ & $\phantom{-}0.0512722628998$ & $\phantom{-}0.0539645641977$ & $\phantom{-}0.0790749932687$ & $\phantom{-}0.106664891545$ \\
        $\hat{s}_{4}$ & $\phantom{-}0.0598653990819$ & $-0.0243392345095$ & $-0.99790861489$ & $-0.00146013168443$ \\
        $\hat{s}_{5}$ & $\phantom{-}0.0536845559625$ & $\phantom{-}0.042525059308$ & $-0.815005301074$ & $\phantom{-}1.03501866893$ \\
        $\hat{s}_{6}$ & $\phantom{-}0.00832229054357$ & $-0.00418418326664$ & $-5.41171804734$e-06 & $-6.4360581837$ \\
        $\hat{s}_{7}$ & $\phantom{-}0.0281994747078$ & $\phantom{-}0.0296777546137$ & $\phantom{-}0.0434999318962$ & $\phantom{-}0.058669162844$ \\
        $\hat{s}_{8}$ & $\phantom{-}0.026978030238$ & $-0.0109685022423$ & $-0.449703262016$ & $-0.000657678209258$ \\
        $\hat{s}_{9}$ & $\phantom{-}0.0242011480547$ & $\phantom{-}0.0191715734179$ & $-0.367373745058$ & $\phantom{-}0.466569759349$ \\
        $\hat{s}_{10}$ & $\phantom{-}0.0037506919161$ & $-0.00188532541287$ & $-2.82255343343$e-06 & $-2.9018616866$ \\
        $\hat{s}_{11}$ & $\phantom{-}0.0127077970338$ & $\phantom{-}0.0133767067373$ & $\phantom{-}0.0196090894894$ & $\phantom{-}0.0264540615654$ \\
        $\hat{s}_{12}$ & $\phantom{-}0.0121439073914$ & $-0.0049838504505$ & $-0.202713054229$ & $-0.000197805387775$ \\
        $\hat{s}_{13}$ & $\phantom{-}0.0112900526678$ & $\phantom{-}0.00929003414053$ & $-0.161456580967$ & $\phantom{-}0.211692282532$ \\
        $\hat{s}_{14}$ & $\phantom{-}0.00144550242937$ & $-0.000618072920234$ & $-0.000105478869183$ & $-1.52819846002$ \\
        $\hat{s}_{15}$ & $\phantom{-}0.00455638237545$ & $\phantom{-}0.00563436828252$ & $\phantom{-}0.00813263074882$ & $\phantom{-}0.0134836469778$ \\
        $\hat{s}_{16}$ & $\phantom{-}0.0299785746856$ & $\phantom{-}0.00891662895616$ & $-0.0343147668569$ & $\phantom{-}0.0340399244557$ \\
        $\hat{s}_{17}$ & $-0.026049583212$ & $-0.0475477618468$ & $-0.409735409863$ & $\phantom{-}0.20805798649$ \\
        $\hat{s}_{18}$ & $\phantom{-}0.0497937735344$ & $\phantom{-}0.534053889757$ & $-0.326462982573$ & $\phantom{-}0.0814565811784$ \\
        \hline
    \end{tabular}
    \caption{Approximate values of the components of the vectors $\hat{s}_k$.}
    \label{table:sk}
\end{table}

\bibliographystyle{plain} 
\bibliography{references}

\end{document}